\newtheorem*{thm*}{Theorem}         
\newtheorem{thm}{Theorem}
\newtheorem{lemma}[thm]{Lemma}
\newtheorem{claim}[thm]{Claim}
\newtheorem{conjecture}[thm]{Conjecture}
\newtheorem*{cor*}{Corollary}
\newtheorem*{prop*}{Proposition}
\newtheorem*{mydef*}{Definition}
\newtheorem*{rem}{Remark}
\newtheorem{newcase}{Case}
\title{Every subcubic multigraph is $(1,2^7)$-packing edge-colorable}
\author{
	Xujun Liu\thanks{Department of Foundational Mathematics, Xi'an Jiaotong-Liverpool University, Suzhou, Jiangsu, China, xujun.liu@xjtlu.edu.cn} \and
	Michael Santana\thanks{Department of Mathematics, Grand Valley State University, Allendale, MI, USA, santanmi@gvsu.edu; research of M. Santana was supported by the AMS-Simons Travel Grant} \and
	Taylor Short\thanks{Department of Mathematics, Grand Valley State University, Allendale, MI, USA, shorttay@gvsu.edu}
}
\begin{document}
	\maketitle
	
	\begin{abstract}
		For a non-decreasing sequence $S = (s_1, \ldots, s_k)$ of positive integers, an $S$-packing edge-coloring of a graph $G$ is a decomposition of edges of $G$ into disjoint sets $E_1, \ldots, E_k$ such that for each $1 \le i \le k$ the distance between any two distinct edges $e_1, e_2 \in E_i$ is at least $s_i+1$. The notion of $S$-packing edge-coloring was first generalized by Gastineau and Togni from its vertex counterpart. They showed that there are subcubic graphs that are not $(1,2,2,2,2,2,2)$-packing (abbreviated to $(1,2^6)$-packing) edge-colorable and asked the question whether every subcubic graph is $(1,2^7)$-packing edge-colorable. Very recently, Hocquard, Lajou, and Lu\v zar showed that every subcubic graph is $(1,2^8)$-packing edge-colorable and every $3$-edge colorable subcubic graph is $(1,2^7)$-packing edge-colorable. Furthermore, they also conjectured that every subcubic graph is $(1,2^7)$-packing edge-colorable.
		
		In this paper, we confirm the conjecture of  Hocquard, Lajou, and Lu\v zar, and extend it to multigraphs.
	\end{abstract}
	
	\section{Introduction}
	Given a non-decreasing sequence $S = (s_1, \ldots, s_k)$ of positive integers, an $S$-packing edge-coloring of a graph $G$ is a decomposition of the edge set of $G$ into disjoint sets $E_1, \ldots, E_k$ such that for each $1 \le i \le k$ the distance between any two distinct edges $e_1, e_2 \in E_i$ is at least $s_i+1$ (where the edge-distance in $G$ is defined as the vertex-distance in the line graph of $G$). The notion of $S$-packing edge-coloring was first generalized by Gastineau and Togni~\cite{GT2} from its vertex counterpart, which has also garnered a significant amount of research \cite{BKL1, BKL2, BF1, BKRW1, FKL1, GT1}.
	
	In this paper we focus on the case where each $s_i \in \{1,2\}$.  When $s_i = 1$, we view every edge in $E_i$ as colored with a `1-color', and when $s_i = 2$, we view every edge in $E_i$ as colored with a `2-color'.  For example, suppose a graph $G$ has an $(s_1, \dots, s_k)$-packing edge-coloring, where for some $j, 1 \le j \le k$, we have $s_i = 1$ for all $1 \le i \le j$, and $s_i = 2$ for all $j+1 \le i \le k$.  We would say that such a coloring uses $j$ distinct 1-colors and $k-j$ distinct 2-colors.  Furthermore, we will also call such an $S$-packing edge-coloring, a $(1^j, 2^{k-j})$-packing edge-coloring.
	
	Note that for each 1-color, its color class forms a matching in the graph; whereas for each 2-color, its color class forms an induced matching in the graph.  As a result,  when each $s_i = 1$, an $S$-packing edge-coloring is equivalent to a proper edge-coloring, and when each $s_i = 2$, an $S$-packing edge-coloring is equivalent to what is known as a strong edge-coloring. The strong chromatic index of a graph $G$ is the minimum integer $k$ such that $G$ has a $(2^k)$-packing edge-coloring. Strong edge-colorings were first introduced by Fouquet and Jolivet \cite{FJ} and then extended by many researchers \cite{A1, CDYZ, CKKR, FKS, HQT, HSY, KLRSWY, LMSS, MR1, SY}.   Therefore, one can view $(1^j, 2^{k-j})$-packing edge-colorings, as an intermediate form of coloring between proper edge-colorings and strong edge-colorings. Erd\H{o}s and Ne\v set\v ril~\cite{EN} conjectured that the upper bound for the strong chromatic index of a graph with maximum degree $\Delta$ is $\frac{5}{4} \Delta^2$ when $\Delta$ is even and $\frac{5}{4} \Delta^2 - \frac{1}{2} \Delta + \frac{1}{4}$ when $\Delta$ is odd. In particular, when $\Delta = 3$, Andersen~\cite{A1} and independently Hor\'ak, Qing, and Trotter~\cite{HQT} confirmed the conjecture of Erd\H{o}s and Ne\v set\v ril by showing that every subcubic graph has a $(2^{10})$-packing edge-coloring.  Kostochka, Li, Ruksasakchai, Santana, Wang, and Yu~\cite{KLRSWY} proved that every subcubic planar graph has a $(2^{9})$-packing edge-coloring. It is also well-known by Vizing's Theorem~\cite{V1} that a subcubic graph with maximum degree $3$ either has a $(1^3)$-packing edge-coloring or a $(1^4)$-packing edge-coloring. For graphs with maximum degree at most 4, Huang, Santana, and Yu~\cite{HSY} showed that every graph with maximum degree at most 4 has a $(2^{21})$-packing edge-coloring, which is only one color away from the conjecture of Erd\H{o}s and Ne\v set\v ril which claims that a $(2^{20})$-packing edge-coloring exists.
	
	An immediate corollary of the result ``every subcubic graph has a $(2^{10})$-packing edge-coloring'' is that every subcubic graph has a $(1, 2^9)$-packing edge-coloring.  Recently, Hocquard, Lajou, and Lu\v{z}ar in \cite{HLL2} showed that every subcubic graph has a $(1,2^8)$-packing edge-coloring, and posed the following conjecture, which was initially posed as a question by Gastineau and Togni in \cite{GT2}:
	
	\begin{conjecture}[Hocquard, Lajou, and Lu\v zar~\cite{HLL2}]\label{hll1}
		Every subcubic graph has a $(1,2^7)$-packing edge-coloring.
	\end{conjecture}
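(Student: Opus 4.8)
The plan is to prove the statement by way of a minimal counterexample together with reducible configurations and discharging. It is convenient to reformulate first: a $(1,2^7)$-packing edge-coloring of $G$ is precisely a choice of a matching $M$ (the single $1$-color class) together with a strong edge-coloring of $G - M$ using seven colors. So it suffices to show that every subcubic multigraph $G$ admits a matching $M$ with $\chi'_s(G-M) \le 7$, where $\chi'_s$ denotes the strong chromatic index. Suppose not, and let $G$ be a counterexample minimizing $|V(G)| + |E(G)|$.

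First I would record the easy structural reductions forced by minimality. The graph $G$ is connected; it has no vertex of degree $0$ or $1$ (a pendant or isolated edge can be stripped off, the smaller multigraph colored by minimality, and the edge reinserted, since it conflicts with only a bounded number of already-colored edges and therefore always leaves one of the eight colors free); parallel classes and degree-$2$ vertices are controlled by analogous deletion/suppression arguments. After this cleanup $G$ is, apart from a short list of exceptional small configurations, essentially cubic.

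The technical core is a catalogue of reducible configurations. For each, I would delete or contract a small piece, obtain a coloring of the smaller multigraph by minimality, and then extend it. The extension is a counting argument exploiting the asymmetry between the two kinds of colors: a $2$-color placed on an edge $e=uv$ is forbidden only on the (boundedly many) edges within edge-distance $2$ of $e$, whereas the single matching color is forbidden merely on edges adjacent to $e$. This extra freedom of the $1$-color is exactly what lets us absorb the one or two edges that a pure strong $7$-coloring cannot accommodate. The configurations to rule out are the usual suspects — adjacent vertices of low degree, short cycles and multi-edges in specified local arrangements, and prescribed degree patterns along short paths — each dispatched by a short check of which of the eight colors remain available.

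Finally I would run a discharging argument to show the reduced graph cannot exist. Since $G$ is now nearly cubic, the naive charge $d(v)-3$ sums to zero, so I expect to need a refined potential that also penalizes the few surviving degree-$2$ vertices, short cycles, or dense cubic neighborhoods; redistributing this charge across the forbidden configurations should force a contradiction. An attractive alternative finish, given the stated fact that every $3$-edge-colorable subcubic graph is $(1,2^7)$-packing edge-colorable, is to argue that any reduced counterexample must itself be $3$-edge-colorable, hence not a counterexample. The main obstacle throughout is the extension step for an edge sitting in a fully cubic neighborhood, where it can conflict with as many as roughly ten others under the strong constraint; showing that eight colors (seven strong plus the matching color) always suffice there demands a delicate configuration-by-configuration analysis, and assembling a set of reducible configurations clean enough to close the discharging is where the real work lies.
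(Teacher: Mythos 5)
There is a genuine gap, and it sits exactly where you admit ``the real work lies'': the finish. Your structural reductions (minimal counterexample, no low-degree vertices, no multi-edges, no short cycles) do match the first half of the paper, which shows a minimal counterexample is $3$-regular, simple, has girth at least seven, and has no edge cut of size at most two. But after those reductions no discharging argument can close the proof: $3$-regular graphs of girth at least seven with no small edge cuts exist in abundance (e.g., random cubic graphs, or cages), so their existence is not contradictory --- what remains is to \emph{color} them, and that cannot be done by unavoidable-configuration arguments, since in such a graph every local neighborhood is a tree and a greedy extension fails (an edge with both endpoints of degree $3$ sees up to $12$ other edges under the strong constraint, far more than the $8$ available colors). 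There is no Euler-type identity to power discharging in general cubic graphs; the charge $d(v)-3\equiv 0$ you compute is a symptom of this, not a technicality to be patched by a ``refined potential.'' Your proposed alternative finish also fails: girth and edge-connectivity restrictions do not force $3$-edge-colorability, because snarks of arbitrarily large girth and high cyclic edge-connectivity exist (Kochol), so a reduced counterexample need not be $3$-edge-colorable and the quoted result of Hocquard, Lajou, and Lu\v{z}ar cannot be invoked.

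What the paper does instead is a genuinely global argument, following Andersen and Huang--Santana--Yu. It first strengthens the statement being induced on (the $1$-color may appear only on edges whose endpoints are both $3$-vertices --- a strengthening your reformulation as ``matching plus strong $7$-coloring of $G-M$'' lacks, yet which is used repeatedly when lifting colorings back from smaller graphs). Then, in the $3$-regular girth-$7$ case, it invokes Petersen's Theorem to obtain a perfect matching colored $1$, builds a maximal sequence $S$ of uncolored edges each of which can be greedily colored in reverse order, and shows that the set of edges that cannot be reached this way is separated from the rest by an edge cut $F$ with $3 \le |F| \le 5$. Two cut lemmas (one computer-assisted) about how good colorings behave across $3$-cuts and $4$-cuts, plus a case analysis of the possible shapes of $F$, then produce a good coloring of all of $G$. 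This global cut-and-matching machinery is the core idea of the proof, and it is absent from your proposal.
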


	In this paper, we prove that Conjecture~\ref{hll1} is true by proving a slightly stronger statement, utilizing techniques from \cite{A1} and \cite{HSY}.
	
	\begin{thm}\label{LSS1}
		Every subcubic multigraph is $(1,2^7)$-packing edge-colorable so that the 1-color only appears on edges whose endpoints are both 3-vertices. 
	\end{thm}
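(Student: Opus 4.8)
The plan is to argue by contradiction through a minimum counterexample. Let $G$ be a subcubic multigraph that is not $(1,2^7)$-packing edge-colorable with the $1$-color confined to edges joining two $3$-vertices, chosen so as to minimize $|E(G)|$ and then $|V(G)|$. It is convenient to restate the goal as a partition of $E(G)$ into a matching $M$ (the $1$-color) together with seven induced matchings $M_1,\dots,M_7$ (the $2$-colors), subject to the extra requirement that every edge of $M$ is a $(3,3)$-edge, that is, an edge both of whose endpoints are $3$-vertices. For a partially colored $G$, an uncolored edge $e$ may receive a given $2$-color provided no edge within distance $2$ of $e$ already carries that color, and it may receive the $1$-color provided $e$ is a $(3,3)$-edge and no edge adjacent to $e$ carries the $1$-color. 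From minimality one immediately records that $G$ is connected, and that parallel edges simply force distinct colors as part of properness.

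Next I would develop a library of reducible configurations: local structures that cannot appear in $G$ because, whenever present, one may delete a bounded set of edges or vertices, color the remainder by minimality, and extend the coloring back across the deleted part, contradicting the choice of $G$. The easy end of this library handles low-degree vertices. An edge incident to a $1$- or $2$-vertex has at most a bounded number of edges within distance $2$ (in a subcubic multigraph an arbitrary edge $uv$ has at most twelve such neighbors, namely the at most four edges adjacent to $uv$ and the at most eight edges one step further out), so after deletion and recoloring by minimality there remain enough free $2$-colors to re-insert it; hence pendant edges, short paths, adjacent low-degree vertices, and small cycles through them are all forbidden. These reductions push $G$ toward being $3$-regular, or at least toward having its remaining low-degree vertices spread far apart — precisely the regime in which the added freedom of the $1$-color becomes indispensable.

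The heart of the argument is the analysis of $(3,3)$-edges, since these are the only edges eligible for the $1$-color. Here I would combine the local recoloring technique of Andersen with the counting framework of Huang--Santana--Yu: around a chosen $(3,3)$-edge $e = uv$ one deletes $e$ (and possibly reshuffles the colors on a few nearby edges), colors $G - e$ by minimality, and then argues that either some $2$-color is free for $e$ or else the $1$-color can be assigned to $e$ after swapping the $1$-color off of at most one adjacent edge and repairing that edge with a free $2$-color. The bookkeeping must track simultaneously the at most twelve edges within distance $2$ of $e$ and the global matching $M$, organized by the degree sequence of the second neighborhood of $e$, so that every local picture is shown either to leave a free $2$-color or to admit a valid $1$-color exchange; the reducible configurations are exactly the pictures where neither escape route is available.

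To conclude, I would verify that the configurations proved reducible are unavoidable in any subcubic multigraph already carrying the structural constraints forced on $G$. I expect the main obstacle to lie precisely in this last interaction. The extension step for $(3,3)$-edges is delicate because the $1$-color is permitted only between two $3$-vertices, so the usual ``steal a color from a neighbor'' move is blocked the moment a neighbor is a $2$-vertex, and a swap that removes the $1$-color from one adjacent edge can cascade into distance-$2$ conflicts among the seven $2$-colors. Taming this case explosion — and checking that every extension respects both the distance conditions of the $2$-colors and the $(3,3)$-restriction on the single $1$-color — is where the real difficulty of the proof resides.
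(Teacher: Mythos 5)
Your opening reductions (minimal counterexample, eliminating multi-edges, low-degree vertices, and nearby 2-vertices by delete-and-extend counting) match the paper's Section 2 and are sound in outline. But the heart of your proposal --- handling the remaining case by a purely local analysis around a single $(3,3)$-edge $e$: delete $e$, color $G-e$ by minimality, then either find a free 2-color or swap the 1-color off one adjacent edge --- has a genuine gap, and it is exactly the gap you yourself flag at the end. Once $G$ is 3-regular (which your own reductions force), every edge sees twelve other edges, and a good coloring of $G-e$ can use all seven 2-colors on those twelve edges while also placing the 1-color on an edge adjacent to $e$; your proposed repair requires the de-1-colored neighbor to absorb a free 2-color, but that neighbor likewise sees up to twelve colored edges, so the swap can cascade without bound. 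You give no mechanism to stop this cascade, no specification of the ``reducible configurations,'' and no unavoidability/discharging argument to show some configuration in your (unspecified) library must occur. So as written this is a strategy with the decisive step missing, not a proof.

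The paper resolves precisely this difficulty by abandoning edge-by-edge local extension in the regular case and going global. After forcing $G$ to be 3-regular, simple, of girth at least seven, with no edge cut of size at most two, it invokes Petersen's Theorem to obtain a perfect matching, places the 1-color on (a slight modification of) that matching, so that every uncolored edge already sees four 1-colored edges and hence at most eight 2-colored ones. It then builds a maximal greedy ordering $S$ of uncolored edges (each edge in $S$ sees enough later/uncolored edges to be colorable in reverse order), and when $S$ fails to exhaust the uncolored edges, the leftover region is separated from the rest by a small edge cut $F$ with $3 \le |F| \le 5$; two edge-cut lemmas (a structural one for 3-cuts and a computer-verified one for 4-cuts) then yield a contradiction through an explicit case analysis of $F$. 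Nothing in your sketch plays the role of Petersen's Theorem or of the cut analysis, and it is doubtful the local Andersen-style exchange alone can be pushed through in the 3-regular, girth-7 regime --- the authors' detour through global structure is strong evidence that it cannot, at least not without a case analysis far beyond what you outline.
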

	
	The following graph $G$ shows that our result in Theorem~\ref{LSS1} is sharp.  It is not $(1,2^6)$-packing edge-colorable as there are 10 edges in $G$,  the maximum size of a matching in $G$ is three, and the maximum size of an induced matching in $G$ is one. Therefore, at most three edges in $G$ can be colored with the 1-color, which leaves the remaining edges (of which there are at least seven) to have a distinct 2-color. Thus $G$ has no $(1, 2^6)$-packing edge-coloring.  However, we can color $x_1x_5, x_4x_7$, and $x_3x_6$ with the 1-color, and then give each of the remaining seven edges a distinct 2-color.  This produces a $(1,2^7)$-packing edge-coloring of $G$ in which the 1-color only appears on edges whose endpoints are both 3-vertices.
	
	\begin{rem}
		If the aforementioned conjecture of Erd\H{o}s and Ne\v set\v ril is true, then it is best possible due to an appropriate blow-up of a five cycle $v_1v_2v_3v_4v_5$ (each vertex in $\{v_1, v_2, v_3\}$ is replaced by $\lfloor \frac{\Delta}{2} \rfloor$ independent vertices and each vertex in $\{v_4, v_5\}$ is replaced by $\lceil \frac{\Delta}{2} \rceil$ independent vertices).  The graph in Figure~\ref{ex-1} is in fact this same graph when $\Delta = 3$.  Furthermore, Hocquard, Lajou, and Lu\v{z}ar also conjectured that every subcubic graph has a $(1^2, 2^4)$-packing edge-coloring.  If true, this same graph serves as a sharpness example in that case as well.
		

	\end{rem}
	
	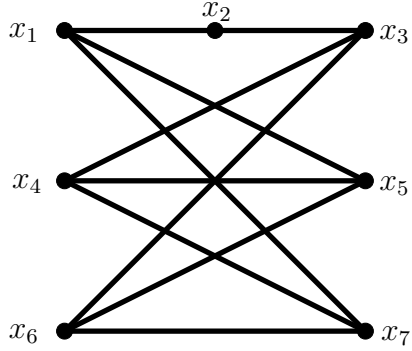
\begin{figure}
		
		\begin{center}
			\begin{tikzpicture}[line cap=round,line join=round,>=triangle 45,x=1.0cm,y=1.0cm]
				\clip(-20.,-11.) rectangle (-12.,-5.);
				
				\draw (-18.864985681280427,-5.781155015702194) node[anchor=north west] {\large $x_1$};
				\draw (-16.3,-5.5) node[anchor=north west] {\large $x_2$};
				\draw (-13.936800630853657,-5.802581907225789) node[anchor=north west] {\large $x_3$};
				\draw (-18.822131898233238,-7.795282818920108) node[anchor=north west] {\large $x_4$};
				\draw (-13.936800630853657,-7.816709710443702) node[anchor=north west] {\large $x_5$};
				\draw (-18.864985681280427,-9.787983730614426) node[anchor=north west] {\large $x_6$};
				\draw (-13.915373739330063,-9.80941062213802) node[anchor=north west] {\large $x_7$};
				\draw (-8.79434666519094,-2.1171565651674795) node[anchor=north west] {\large $x_8$};
				\draw (-7.7230020890112066,-2.1171565651674795) node[anchor=north west] {\large $x_9$};
				\draw (-6.6088037297842845,-2.1171565651674795) node[anchor=north west] {\large $x_{10}$};
				
				\draw [line width=2.pt] (-18.,-6.)-- (-14.,-6.);
				\draw [line width=2.pt] (-18.,-8.)-- (-14.,-6.);
				\draw [line width=2.pt] (-14.,-6.)-- (-18.,-10.);
				\draw [line width=2.pt] (-18.,-10.)-- (-14.,-10.);
				\draw [line width=2.pt] (-14.,-10.)-- (-18.,-8.);
				\draw [line width=2.pt] (-18.,-8.)-- (-14.,-8.);
				\draw [line width=2.pt] (-14.,-8.)-- (-18.,-6.);
				\draw [line width=2.pt] (-18.,-6.)-- (-14.,-10.);
				\draw [line width=2.pt] (-18.,-10.)-- (-14.,-8.);
				\begin{scriptsize}
					\draw [fill=black] (-18.,-6.) circle (3.0pt);
					\draw [fill=black] (-14.,-6.) circle (3.0pt);
					\draw [fill=black] (-18.,-8.) circle (3.0pt);
					\draw [fill=black] (-18.,-10.) circle (3.0pt);
					\draw [fill=black] (-14.,-10.) circle (3.0pt);
					\draw [fill=black] (-14.,-8.) circle (3.0pt);
					\draw [fill=black] (-16.,-6.) circle (3.0pt);
				\end{scriptsize}
			\end{tikzpicture}
			
		\end{center}
		\caption{Sharpness example of Theorem~\ref{LSS1}.}\label{ex-1}
		
	\end{figure}
	
	The structure of the rest of this paper is as follows.  In Section \ref{sec:lemmas} we show that a minimal counterexample to our statement  is 3-regular, contains no edge cuts of size at most two, and has girth at least seven.  In Section \ref{sec:cuts} we finish the proof of Theorem \ref{LSS1} by mimicking proofs in \cite{A1} and \cite{HSY} regarding edge cuts in our graph.  We then  present several questions for further consideration in Section \ref{sec:further}.  Lastly, after the references we have an Appendix that contains links to two pieces of code that were used to prove two lemmas.

	\section{Main lemmas}\label{sec:lemmas}
	
	The main goal of this section is to show that a minimal counterexample to Theorem \ref{LSS1} is 3-regular, contains no edge cuts of size at most two, and has girth at least seven.  We first introduce some terminology and notation that will be used throughout the paper.
	
	We will use the color 1 to denote the 1-color, and use the colors $2_1, 2_2, \dots, 2_7$ to denote the seven distinct 2-colors in a potential $(1,2^7)$-packing edge-coloring of a graph.
	
	Let $e$ and $e'$ be two edges in a graph.  We say $e$ sees $e'$ (and vice-versa) if $e$ and $e'$ are incident to each other, or incident to a common edge.
	

		Let $G$ be a graph, and let $f$ be a coloring of some (possibly all) the edges of $G$.  For a vertex $v \in V(G)$, let $U_f(v) = \{ f(e): e \text{ is incident to } v\}$.

		For an edge $e \in E(G)$, let $S_f(e) = \{f(e') : e \text{ sees } e'\}$, and  let $S^2_f(e) = \{2_i : e \text{ sees an edge colored with } 2_i\}$.
		
		For an edge $e \in E(G)$, let $A_f(e) = \{1, 2_1, \dots, 2_7\} \setminus S_f(e)$, and  let $A^2_f (e) = \{2_1, \dots, 2_7\}\setminus S^2_f(e)$. 
		
		We often drop the subscript  indicating the coloring if it is clear from context.
		
		A good coloring of a graph is a $(1,2^7)$-packing edge-coloring of the graph such that the color 1 only appears on edges whose endpoints are both 3-vertices.  A good partial coloring of a graph is a coloring of some (possibly all) the edges of $G$ using the colors $1, 2_1, \dots, 2_7$, such that the color 1 only appears on edges whose endpoints are both 3-vertices, and if two edges both colored with the same 2-color, then they do not see each other.
		
		Often in our arugments we will compare the sizes of $A^2(e)$ for various uncolored edges.  We will then use a formulation of Hall's Theorem to obtain a good coloring of a graph via a system of distinct representatives.  In such a case, we will say statements such as `We are done by SDR'.
		
		In all the following, let $G$ be a counterexample to Theorem \ref{LSS1} such that $|V(G)| + |E(G)|$ is smallest.  As a result, we may assume that $G$ is connected by the minimality of $G$.

		\begin{lemma}
			$G$ is simple.
		\end{lemma}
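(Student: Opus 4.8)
The plan is to argue by contradiction using the minimality of $G$. Suppose $G$ is not simple, so $G$ contains a pair of parallel edges $e_1, e_2$ joining two vertices $u$ and $v$. Since $G$ is subcubic, the existence of a double edge forces strong local structure: each of $u$ and $v$ has at most one additional edge leaving it. First I would form a smaller multigraph $G'$ by deleting one of the parallel edges, say $e_2$ (and possibly suppressing resulting low-degree vertices or contracting, depending on what keeps $G'$ subcubic and strictly smaller). By minimality, $G'$ admits a good coloring $f'$, and the goal is to extend $f'$ to a good coloring of $G$ by assigning a color to the single uncolored edge $e_2$.

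The key step is to count how many colors are forbidden for $e_2$. The edge $e_2$ sees exactly those edges incident to $u$ or $v$, together with edges incident to a common edge; but because $e_1$ is parallel to $e_2$, the edges seen by $e_2$ are essentially the same as those seen by $e_1$, namely $e_1$ itself, the (at most one) other edge at $u$, the (at most one) other edge at $v$, and their neighbors. I would bound $|S^2_{f'}(e_2)|$ by carefully enumerating the at-most-three distinct edges adjacent to $e_2$ in $G$ (the parallel edge $e_1$ plus one pendant edge at each of $u,v$) and the edges those pendants see. Since there are seven $2$-colors available and the neighborhood of a double edge is small, I expect $|A^2_{f'}(e_2)| \ge 1$, so a free $2$-color exists. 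Assigning $e_2$ that free $2$-color yields a good coloring of $G$ (note we need not use the $1$-color here, so the restriction that $1$ appears only between two $3$-vertices is automatically respected), contradicting the choice of $G$ as a counterexample.

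The main obstacle I anticipate is the bookkeeping of exactly which edges $e_2$ sees and confirming that this count never exhausts all seven $2$-colors, particularly in the worst case where both $u$ and $v$ have degree $3$ and their extra neighbors are themselves high-degree. In that worst case $e_2$ could see $e_1$, one further edge at $u$, one further edge at $v$, and up to two more edges adjacent to each of those, giving on the order of six seen edges; I would need to check that even then at least one $2$-color remains available, or else handle the tight subcases by recoloring rather than a naive greedy extension. A secondary subtlety is ensuring $G'$ is genuinely a valid smaller subcubic multigraph and that a good coloring of $G'$ restricts correctly (for instance, if deleting $e_2$ drops a vertex to degree $2$, the good-coloring condition about the $1$-color becomes easier, not harder, to satisfy). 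Once the color count is verified, the extension is immediate and the contradiction follows.
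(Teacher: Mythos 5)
Your overall strategy (delete one parallel edge, invoke minimality to get a good coloring of the smaller multigraph, then extend to the single uncolored edge) is exactly the paper's, but your central counting claim fails in the worst case, and your proposal explicitly disclaims the one idea that rescues it. When both endpoints $u$ and $v$ are 3-vertices, the uncolored parallel edge $e_2$ sees seven edges, not ``on the order of six'': the other parallel edge, one further edge at each endpoint, and up to two edges beyond each of those, giving $3+2+2=7$. Since there are exactly seven 2-colors, those seven seen edges can be colored with all seven distinct 2-colors, in which case $|A^2_{f'}(e_2)| = 0$ and no free 2-color exists. So your expectation that $|A^2_{f'}(e_2)| \ge 1$ is false precisely in the tight case, and the vague fallback of ``handling the tight subcases by recoloring'' is not a proof.

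The paper's resolution of the tight case is not recoloring but the 1-color --- exactly the color you assert you ``need not use.'' If all seven 2-colors are blocked, then all seven seen edges carry 2-colors; in particular the three edges incident to $e_2$ (the parallel edge and the one extra edge at each of $u$ and $v$) are colored with 2-colors, so no edge adjacent to $e_2$ is colored 1, and adding $e_2$ to the 1-color class keeps that class a matching. Moreover, reaching seven seen edges forces both $u$ and $v$ to be 3-vertices, which is precisely the condition a good coloring requires for the 1-color to appear on $e_2$. Hence the tight case is closed immediately by coloring $e_2$ with 1; without this observation your argument does not go through. (Your framework does quietly cover the triple-edge case, where $G$ consists of just the two vertices and the extension is trivial, so that part is fine.)
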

		
		\begin{proof}
			Suppose $G$ has two vertices $x$ and $y$ such that there are multiple edges between $x$ and $y$.  If there are three edges between $x$ and $y$, then these are the only vertices and edges in $G$, and there is easily a good coloring of $G$.
			
			If there are exactly two edges between $x$ and $y$, say $e_1$ and $e_2$, then by the minimality of $G$, $G - e_1$ has a good coloring.  If we impose this coloring onto $G$, then $e_1$ is the only edge that remains to be colored.  
			
			Note that the maximum number of edges $e_1$ sees in $G$ is seven.  So the only way  $|A^2(e_1)| = 0$ is if $x$ and $y$ are both 3-vertices in $G$, and all seven 2-colors are used on those  edges seen by $e_1$.  This means we can color $e_1$ with 1 to obtain a good coloring of $G$.
		\end{proof}

		\begin{lemma}\label{lem:delta2}
			$\delta(G) \ge 2$.
		\end{lemma}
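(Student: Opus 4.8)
The plan is to argue by contradiction using the minimality of $G$. Suppose $\delta(G) < 2$, so $G$ contains a vertex of degree at most one. Since $G$ is connected and is a counterexample, $G$ has at least one edge, and I would first dispose of trivial small cases. If $G$ has a vertex $v$ of degree $0$, then since $G$ is connected, $G$ consists of the single vertex $v$ with no edges, which is trivially good-colorable; so in fact $\delta(G) \ge 1$ already, and we may assume $G$ has a vertex $x$ of degree exactly one, incident to a unique edge $e = xy$.

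The core of the argument is a standard deletion-and-extension move. Delete the edge $e$ (or delete the vertex $x$ together with $e$) to obtain a smaller subcubic multigraph $G'$ with $|V(G')| + |E(G')| < |V(G)| + |E(G)|$. By minimality, $G'$ admits a good coloring $f$. I would then impose $f$ onto $G$, leaving only $e$ uncolored, and show that $e$ can always be colored to extend $f$ to a good coloring of $G$, contradicting that $G$ is a counterexample. The key counting step is to bound $|S^2_f(e)|$, the set of 2-colors seen by $e$. Since $x$ has degree one, $e$ sees no edges through $x$; all edges it sees pass through $y$, and $y$ has degree at most three, hence at most two edges other than $e$ at $y$, each of which in turn sees at most two further edges. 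This gives $|S^2_f(e)| \le 6$, so $|A^2_f(e)| \ge 1$ and we can assign $e$ a valid 2-color. The main point to handle carefully is the ``good'' constraint: because $x$ is a $1$-vertex (not a $3$-vertex), $e$ may not receive color $1$, so I must ensure an available \emph{2-color} exists rather than merely an available color. The bound $|S^2_f(e)| \le 6 < 7$ guarantees this, and no use of color $1$ on $e$ is needed.

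The step I expect to require the most care is the exact edge-counting that produces the bound $|S^2_f(e)| \le 6$, since it depends on the degree of $y$ and on how many of $y$'s incident edges are $2$-colored versus $1$-colored under $f$. If $\deg_G(y) = 3$, then removing $e$ leaves $y$ with degree two in $G'$, so at $y$ there are two edges $e_1, e_2$, and $e$ sees $e_1, e_2$ together with the (at most two) other edges incident to each of $e_1, e_2$ at their far endpoints, totaling at most six edges and hence at most six distinct $2$-colors; the case $\deg_G(y) \le 2$ only improves the bound. Thus $A^2_f(e) \neq \varnothing$, we color $e$ with any such $2$-color, and since this never assigns color $1$ the resulting coloring is good. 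This contradiction establishes $\delta(G) \ge 2$.
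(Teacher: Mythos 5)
Your proof is correct and follows essentially the same route as the paper: delete the pendant edge (the paper deletes the degree-one vertex), invoke minimality to get a good coloring, and observe that the pendant edge sees at most six edges, so some 2-color in $A^2(e)$ remains available. Your explicit handling of the degree-zero case and of the constraint that $e$ cannot receive color $1$ are points the paper leaves implicit, but the argument is the same.
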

		
		\begin{proof}
			Suppose on the contrary that $G$ contains a vertex $v$ whose only neighbor is a vertex $x$.   By the minimality of $G$, $G - v$ has a good coloring.  If we impose this coloring onto $G$, then $vx$ is the only uncolored edge.
			
			Since $vx$ sees at most six edges, $|A^2(vx)| \ge 1$, so we can color $vx$.
		\end{proof}

		\begin{lemma}\label{lem:cutedge}
			$G$ has no cut-edge, and as a consequence, $G$ has no cut-vertex.
		\end{lemma}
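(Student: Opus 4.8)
The plan is to argue by contradiction using the minimality of $G$, in the same style as the two preceding lemmas. Suppose $G$ has a cut-edge $e = xy$, so that $G - e$ has two components, say $G_x$ containing $x$ and $G_y$ containing $y$. The first move is to apply minimality: since $G$ is connected and a minimal counterexample, each of the two smaller graphs obtained by deleting $e$ (after possibly tidying up the resulting degree-$1$ vertices $x$ and $y$) should admit a good coloring. The subtlety is that deleting $e$ turns $x$ and $y$ into vertices of degree one, and by Lemma~\ref{lem:delta2} a minimal counterexample has minimum degree at least two; so $G_x$ and $G_y$ are genuinely smaller than $G$ but are not themselves counterexamples, hence each has a good coloring by minimality.

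The key step is to combine these two colorings into a good coloring of $G$, leaving only $e$ uncolored, and then to color $e$. Since $e = xy$ is a cut-edge and $G$ has $\delta(G) \ge 2$ with maximum degree three, the vertices $x$ and $y$ each have at most two other incident edges in $G_x$ and $G_y$ respectively. Therefore $e$ sees at most $2 + 2 + 2 + 2 = $ a bounded number of edges; more precisely, the edges $e$ sees are exactly the (at most two) edges at $x$ other than $e$, the (at most two) edges at $y$ other than $e$, and the edges incident to those. I would count carefully to confirm that $e$ sees at most six edges through the $G_x$ side and the $G_y$ side combined in a way that leaves $|A^2(e)| \ge 1$, or else that $|A^2(e)| = 0$ forces both $x$ and $y$ to be $3$-vertices, in which case $e$ can be colored $1$. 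The crucial point exploited by the cut-edge structure is that, because the two colorings on $G_x$ and $G_y$ were chosen independently, one has freedom to permute the $2$-colors on one side to reduce $S^2(e)$; this recoloring freedom is what guarantees a color remains available for $e$.

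I expect the main obstacle to be the bookkeeping around the edge $e$ seeing up to seven edges and the interaction with the ``good'' constraint that color $1$ may only be placed on an edge between two $3$-vertices. If $e$ sees all seven $2$-colors and $x$ or $y$ is only a $2$-vertex, then neither $1$ nor any $2$-color is immediately available, and a naive imposition of the two sub-colorings fails. Here the fix is to use the independence of the two sides: permute the $2$-colors used in $G_y$ (a valid operation preserving a good coloring, since renaming $2$-colors globally on one component respects all distance constraints) so that the at most six edges $e$ sees avoid at least one $2$-color, making $A^2(e) \ne \emptyset$. Finally, the ``as a consequence'' clause follows immediately: if $G$ had a cut-vertex $v$ but no cut-edge, one still decomposes $G$ at $v$ into blocks and applies minimality to each block, or more simply observes that a $2$-edge-connected graph with a cut-vertex can be handled by the same independent-coloring-and-recoloring argument on the blocks meeting at $v$, again yielding a good coloring of $G$ and contradicting that $G$ is a counterexample.
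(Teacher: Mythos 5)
Your overall strategy (split at the cut-edge, color the two sides by minimality, permute the $2$-colors of one side, merge, then color $e$ last) is close in spirit to the paper's, but as written it has a genuine gap at the merging step. After imposing the colorings of $G_x$ and $G_y$ onto $G$, every edge incident to $x$ and every edge incident to $y$ see each other in $G$ (they are incident to the common edge $e$), so the merged coloring is a good partial coloring only if the $2$-colors appearing at $x$ are disjoint from the $2$-colors appearing at $y$. Nothing in your proposal enforces this: the permutation you invoke is aimed solely at making $A^2(e) \neq \emptyset$, so you could end up with, say, $2_1 \in U(x) \cap U(y)$, which is an invalid coloring no matter what color $e$ later receives. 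Relatedly, your count of the edges seen by $e$ is wrong: $e$ sees up to six edges on \emph{each} side (at most two incident to $x$ plus at most four at distance two, and likewise for $y$), i.e.\ up to twelve in total, not ``at most six combined,'' and twelve seen edges can certainly block all seven $2$-colors. The repair is the per-side argument: each side alone blocks at most six of the seven $2$-colors, hence leaves at least one free, and one must choose the permutation of the $G_y$-side so that simultaneously (i) the free colors of the two sides coincide and (ii) the $2$-colors at $y$ avoid those at $x$; both constraints can be met because $|U(x)|, |U(y)| \le 2$. The paper sidesteps all of this bookkeeping with a cleaner device: it augments each component $G_i$ by a pendant vertex $y_i$ and edge $x_iy_i$, applies minimality to the augmented graphs $H_i$ (which forces $x_iy_i$ to receive a $2$-color, since $y_i$ is not a $3$-vertex), and permutes so that both pendant edges are colored $2_1$ while $U(x_1) \subseteq \{1, 2_1, 2_2, 2_3\}$ and $U(x_2) \subseteq \{1, 2_1, 2_4, 2_5\}$; the cut-edge then simply inherits the color $2_1$, and validity at distance two is automatic because nothing seen by $x_iy_i$ in $H_i$ carries the color $2_1$.

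A second, more minor point: your handling of the cut-vertex consequence is both vague and unnecessary. In a subcubic graph a cut-vertex always yields a cut-edge: the at most three edges at a cut-vertex $v$ are distributed among at least two components of $G - v$, so some component receives exactly one of them, and that edge is a cut-edge. This is the paper's one-line reduction, after which the cut-vertex case is vacuous. The block-decomposition argument you sketch as a fallback would re-encounter exactly the same cross-conflict problem when gluing colorings at $v$, so it does not come for free.
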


		\begin{proof}
			Note that since $\Delta(G) \le 3$, if $G$ has a cut-vertex, then $G$ must have a cut-edge.  So it suffices to only consider when $G$ has a cut-edge.
			
			Suppose on the contrary that $x_1x_2$ is a cut-edge in $G$.  Let $G_1$ and $G_2$ be the two components of $G - x_1x_2$ such that $x_i$ is in $G_i$.  From each $G_i$ form a new graph $H_i$ by adding a new vertex $y_i$ and the edge $x_iy_i$.
			
			By Lemma \ref{lem:delta2}, $x_i$ has another neighbor in $H_i$ other than $y_i$, so that $H_i$ has fewer vertices than $G$.  Thus, by the minimality of $G$, $H_i$ has a good coloring, say $\phi_i$.
			
			Note that $x_iy_i$ must get a 2-color in $\phi_i$.  We can permute the 2-colors on both $H_1$ and $H_2$ so that $x_1y_1$ and $x_2y_2$ are colored with $2_1$.  We can furthermore, permute the 2-colors on $H_1$ so that $U_{\phi_1}(x_1) \subseteq \{2_1, 2_2, 2_3, 1\}$ and we can permute the 2-colors on $H_2$ so that $U_{\phi_2}(x_2) \subseteq \{2_1, 2_4, 2_5, 1\}$.  
			
			Imposing these colorings onto $G$ yields a good coloring of $G$.
		\end{proof}

		\begin{lemma}\label{lem:cut}
			If $G$ has an edge-cut of size two, then it must consist of two edges incident to the same 2-vertex.
		\end{lemma}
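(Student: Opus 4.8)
The plan is to argue by contradiction: suppose $\{e_1,e_2\}$ is an edge-cut of size two that is \emph{not} a pair of edges incident to a common $2$-vertex, and produce a good coloring of $G$, contradicting minimality. Write $e_i=x_iy_i$; by Lemma~\ref{lem:cutedge} neither edge alone is a cut-edge, so $G-\{e_1,e_2\}$ has exactly two components $G_1,G_2$, and we may label so that $x_i\in G_1$, $y_i\in G_2$. Since $G$ is simple we cannot have both $x_1=x_2$ and $y_1=y_2$, and if the two cut edges share a $2$-vertex then (using $\delta(G)\ge 2$ from Lemma~\ref{lem:delta2}) that vertex is isolated in its side, which is the excluded configuration. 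Hence on each side the two endpoints are either distinct or coincide in a $3$-vertex. From each side I would build a smaller graph: let $H_1$ be obtained from $G_1$ by adding one new vertex $u$ together with the edges $x_1u$ and $x_2u$ (a pair of parallel edges when $x_1=x_2$, which is permissible as we work with multigraphs), and define $H_2$ from $G_2$ symmetrically with a new vertex $w$ adjacent to $y_1$ and $y_2$. Each $H_i$ is a subcubic multigraph, and using $\delta(G)\ge 2$ one checks that the opposite side contributes at least one vertex and one edge, so $|V(H_i)|+|E(H_i)|<|V(G)|+|E(G)|$; by minimality each $H_i$ has a good coloring $\phi_i$.

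The key feature of the gadget is that $u$ and $w$ are $2$-vertices, so $x_1u,x_2u$ (resp.\ $y_1w,y_2w$) receive distinct $2$-colors and never the color $1$, exactly mimicking the cut edges. I would then transfer these colors to $G$: permute the $2$-colors of $\phi_2$ so that $\phi_2(y_1w)=\phi_1(x_1u)=:c_1$ and $\phi_2(y_2w)=\phi_1(x_2u)=:c_2$ (possible since both are pairs of distinct $2$-colors), impose $\phi_1$ on $E(G_1)$ and the permuted $\phi_2$ on $E(G_2)$, and color $e_i$ with $c_i$. The point is that every edge of $G_1$ that $e_i$ sees in $G$ is also seen by $x_iu$ within $H_1$, and symmetrically every edge of $G_2$ that $e_i$ sees is seen by $y_iw$ within $H_2$; hence $c_i$ avoids every color seen by $e_i$ on either side, while $c_1\ne c_2$ settles the case in which $e_1$ and $e_2$ see each other. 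The gadget moreover forces each $G_1$-edge that sees the opposite cut edge (necessarily through the edge $x_1x_2$) to avoid its color, so the only remaining way the combined coloring could fail is a clash \emph{across} a single cut edge.

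The main obstacle is precisely this last point. A $G_1$-edge at $x_i$ and a $G_2$-edge at $y_i$ see each other through $e_i$, so I must ensure that the set $P_i$ of colors on $G_1$-edges at $x_i$ is disjoint from the set $Q_i$ of colors on $G_2$-edges at $y_i$, for $i=1,2$. Since $\phi_1$ is already fixed (determining $P_1,P_2$ and the targets $c_1,c_2$), I have freedom only in permuting the five remaining $2$-colors of $\phi_2$, and I must simultaneously map $Q_1$ off $P_1$ and $Q_2$ off $P_2$. This is an SDR/Hall problem on five colors with $|P_i|,|Q_i|\le 2$, and the crucial ingredient is that $G$ is simple: there is at most one edge between $x_1$ and $x_2$ and at most one between $y_1$ and $y_2$, so $|P_1\cap P_2|\le 1$ and $|Q_1\cap Q_2|\le 1$. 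This prevents two fully-constrained colors from colliding and lets one verify Hall's condition, so the required permutation exists and we are done by SDR. In the degenerate case $x_1=x_2=x$ (a shared $3$-vertex with third edge $xz$), both $P_1,P_2$ collapse to the single color $\phi_1(xz)$, and the requirement is only to keep this one color out of $Q_1\cup Q_2$; since $|Q_1\cup Q_2|\le 4$, at least one of the five non-gadget colors is unused by $Q_1\cup Q_2$ and can be permuted onto $\phi_1(xz)$. In every case the combined coloring is a good coloring of $G$, contradicting minimality, which establishes the lemma.
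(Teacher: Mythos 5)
Your setup is sound and, up to the last step, is essentially the paper's own construction: the gadget of a new $2$-vertex joined to the two cut-edge endpoints on each side is exactly what the paper uses in its generic cases, and your reduction of all possible conflicts to the two cross-cut conditions $P_i\cap Q_i=\emptyset$ (after forcing $\pi(\phi_2(y_iw))=c_i$) is correct. The gap is in the Hall/SDR verification. Simplicity gives $|P_1\cap P_2|\le 1$ only when $x_1$ and $x_2$ are \emph{adjacent} in $G_1$ (then every edge at $x_1$ sees every edge at $x_2$ through the edge $x_1x_2$); when $x_1$ and $x_2$ are non-adjacent, and likewise $y_1,y_2$, there is no constraint whatsoever, and an arbitrary good coloring of $H_2$ may have $Q_1=Q_2$ with $|Q_1|=2$ while $\phi_1$ has $P_1\cap P_2=\emptyset$ with $|P_1|=|P_2|=2$. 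In that configuration (all four endpoints $3$-vertices, the two forced gadget colors $c_1,c_2$ distinct from $P_1\cup P_2$), your permutation must send the two colors of $Q_1=Q_2$ to \emph{distinct} colors outside $P_1\cup P_2\cup\{c_1,c_2\}$, a set of size $7-2-4=1$, so Hall's condition fails. Permuting $\phi_1$ instead of $\phi_2$ fails symmetrically (four constrained sources, three admissible targets). Since $\phi_1,\phi_2$ are arbitrary good colorings handed to you by minimality, you cannot exclude this configuration, so the proof breaks precisely in the case where all four endpoints are $3$-vertices.

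That case is exactly where the paper departs from a uniform treatment, and it contains the idea your argument is missing: when both endpoints of one cut edge, say $e_1$, are $3$-vertices, the paper uses a different gadget (a pendant vertex attached only to the endpoint of the \emph{other} cut edge) so that $U(x_i)$ consists of $2$-colors only, and then colors $e_1$ with the color $1$ rather than a $2$-color. Spending the $1$-color on a cut edge with two $3$-vertex endpoints shrinks the number of $2$-colors that must be packed around the cut from six to five, which is what makes the permutations fit; your scheme, which always gives both cut edges $2$-colors, cannot achieve this. Note that when at least one of the four endpoints is a $2$-vertex, your counting does go through (then either $|Q_1\cap Q_2|\le 1$ or $|P_1\cup P_2|\le 3$ holds for degree reasons), so your argument could be repaired by splitting off the all-$3$-vertex case and treating it as the paper does. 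A minor further point: the degenerate case $x_1=x_2$ that you handle with parallel gadget edges is vacuous, since the third edge at that shared $3$-vertex would be a cut-edge, contradicting Lemma~\ref{lem:cutedge}.
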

		
		\begin{proof}
			Suppose $e_1$ and $e_2$ form an edge-cut of size two.  By Lemma \ref{lem:cutedge}, neither one is a cut-edge so that $G - e_1 - e_2$ has exactly two components.  If $e_1$ and $e_2$ are incident to one another, then either their common endpoint is a 2-vertex or a 3-vertex.  If it is a 2-vertex, we are done.  If it is a 3-vertex, then the third edge incident to this 3-vertex is a cut-edge, contradicting Lemma \ref{lem:cutedge}.  So we may assume  $e_1$ and $e_2$ form a matching, and let $e_1 = x_1x_2$ and let $e_2 = y_1y_2$ such that in $G - e_1 - e_2$,  $x_i$ and $y_i$ are in the same component, say $G_i$.
			
			
			Suppose both $x_1$ and $x_2$ are 3-vertices.  For each $i \in [2]$ form $H_i$ from $G_i$ by adding a vertex $z_i$ adjacent to only $y_i$.  By the minimality of $G$, each $H_i$ has a good coloring where $U(x_i)$ contains only 2-colors and $y_iz_i$ is not colored with color 1.  So we can permute these colorings so that $y_iz_i$ is colored $2_7$ for each $i \in [2]$, we can furthermore permute the colors so that $U(x_1) \subseteq \{2_1,2_2,2_7\}, U(y_1) \subseteq \{1, 2_1,2_2,2_3,2_4, 2_7\}$, $U(y_2) \subseteq \{1,2_5,2_6,2_7\}$, and $U(x_2) = \{2_3,2_4,2_5,2_6,2_7\}$.  We then impose these colorings onto $G$ by coloring $y_1y_2$ with $2_7$ and $x_1x_2$ with 1.

			In all the rest of the cases, we modify each $G_i$ as follows. For each $i \in [2]$, form $H_i$ from $G_i$ by adding a vertex $z_i$ adjacent to both $x_i$ and $y_i$.  By the minimality of $G$, each $H_i$ has a good coloring where $x_iz_i$ and $y_iz_i$ are colored with 2-colors.  So we can permute these colorings so that $x_iz_i$ is colored with  $2_6$ and $y_iz_i$ is colored with $2_7$.  Note that as a result, $2_6 \notin U(y_1) \cup U(y_2)$ and $2_7 \notin U(x_1) \cup U(x_2)$.
			
			Now suppose both $x_1$ and $x_2$ are 2-vertices in $G$.  By the above and symmetry, we may assume that $y_2$ is a 2-vertex in $G$.    So in $H_1$ and $H_2$, we can furthermore permute the colors so that $U(x_1) = \{2_1, 2_6\}$, $U(y_1) \subseteq \{1, 2_1, 2_2, 2_3, 2_7\}$, $U(y_2) = \{2_4,2_7\}$, and $U(x_2) \subseteq \{2_4, 2_5, 2_6\}$.  We then impose these colorings onto $G$ by coloring $x_1x_2$ with $2_6$ and $y_1y_2$ with $2_7$.
			
			So we may assume that $x_1$ is a 3-vertex, and $x_2$ is a 2-vertex in $G$.  If $y_1$ is a 3-vertex and $y_2$ is a 2-vertex in $G$, then we can further permute the colors in each $H_i$ so that $U(x_1) \subseteq \{1, 2_1, 2_2, 2_6\}$, $U(y_1) \subseteq \{1, 2_1, 2_2, 2_3, 2_4, 2_7\}$, $U(y_2) \subseteq \{2_5, 2_7\}$, and $U(x_2) \subseteq \{2_4, 2_5, 2_6\}$.  We then impose these colorings onto $G$ by coloring $x_1x_2$ with $2_6$ and $y_1y_2$ with $2_7$.
			
			Lastly, if $y_1$ is a 2-vertex and $y_2$ is a 3-vertex in $G$, then we can further permute the colors in each $H_i$ so that $U(x_1) \subseteq \{1, 2_1, 2_2, 2_6\}$, $U(y_1) \subseteq \{2_1, 2_2, 2_3, 2_7\}$, $U(y_2) \subseteq \{1, 2_4, 2_5, 2_7\}$, and $U(x_2) \subseteq \{2_3, 2_4, 2_5, 2_6\}$.  We then impose these colorings onto $G$ by coloring $x_1x_2$ with $2_6$ and $y_1y_2$ with $2_7$.
		\end{proof}

		\begin{lemma}\label{lem:noK3}
			$G$ has no cycle of length 3.
		\end{lemma}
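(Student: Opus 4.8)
The goal is to show that a minimal counterexample $G$ to Theorem~\ref{LSS1} contains no triangle. The natural strategy, consistent with the previous lemmas, is proof by contradiction via minimality: assume $G$ contains a triangle $T = x_1x_2x_3$, delete some of its edges to obtain a smaller graph, extend a good coloring guaranteed by minimality, and derive a contradiction. The plan is to first record the structural constraints we already have. By Lemma~\ref{lem:delta2} every vertex has degree at least $2$, and by Lemma~\ref{lem:cutedge} and Lemma~\ref{lem:cut} the only edge-cuts of size at most two are two edges sharing a common $2$-vertex. These facts sharply limit how the triangle attaches to the rest of $G$: in particular, since $\Delta(G) \le 3$, each vertex $x_i$ of $T$ has at most one neighbor outside $T$, so the number of edges leaving $T$ is at most three.

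First I would analyze the degree sequence of the triangle. If all three vertices of $T$ are $2$-vertices, then $T$ is a connected component, and one directly checks it has a good coloring (giving each edge a distinct $2$-color), contradicting minimality since $G$ is connected with more than three vertices (or handling the trivial case directly). If exactly one or two vertices of $T$ are $3$-vertices, then the edges leaving $T$ form a small edge-cut: two external edges leaving $T$ cannot be ruled out immediately, but an external cut of size one or two would contradict Lemma~\ref{lem:cutedge} or force the configuration allowed by Lemma~\ref{lem:cut}, which one checks is incompatible with a triangle (the two cut-edges would need to share a $2$-vertex, but vertices of $T$ all lie inside $T$). So I expect the only surviving case is that all three $x_i$ are $3$-vertices, each with exactly one private external neighbor $y_i$.

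For the main case, I would delete the three triangle edges $x_1x_2, x_2x_3, x_1x_3$ (and possibly the triangle vertices if they become low-degree), forming a smaller graph $H$ on which minimality supplies a good coloring $\phi$. Imposing $\phi$ onto $G$ leaves the three triangle edges uncolored, with each pendant edge $x_iy_i$ already colored. The task is then to color $x_1x_2, x_2x_3, x_1x_3$ consistently. I would compute, for each uncolored triangle edge $e$, the set $A^2(e)$ of available $2$-colors by counting how many distinct colors $e$ can see: each triangle edge sees the other two triangle edges, the three pendant edges $x_iy_i$, and the edges incident to the $y_i$'s, but the bounded degree caps the number of forbidden colors. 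A careful count should show each triangle edge sees few enough colored edges that $|A^2(e)|$ is large relative to the mutual constraints among the three uncolored edges, so that a system of distinct representatives exists and we finish by SDR — or, when one color $1$ is permissible on a triangle edge (both endpoints being $3$-vertices), use it to relieve pressure.

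The main obstacle I anticipate is the bookkeeping in the all-$3$-vertex case: three mutually-seeing uncolored edges must receive distinct colors, and each can additionally be blocked by the colors appearing on the up to six external edges at the $y_i$ (namely $x_iy_i$ together with the at most two further edges at each $y_i$). In the worst case a triangle edge could see seven colored edges, leaving $A^2(e)$ potentially empty, so the crude count is not by itself sufficient. To handle this I would exploit the freedom to permute the $2$-colors in $\phi$ before imposing it, and, crucially, the option to recolor: for instance, by uncoloring one strategically chosen pendant edge $x_iy_i$ and recoloring it last, or by placing color $1$ on one triangle edge to free a $2$-color, one reduces the effective constraints so that Hall's condition holds for the remaining edges. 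Verifying that some such permutation and recoloring always succeeds across all sub-configurations of the external neighbors' colorings is the delicate part, and is where I would expect to invoke an SDR argument after reducing to a manageable case analysis, possibly with computer assistance as the Appendix suggests was used elsewhere.
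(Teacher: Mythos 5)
There is a genuine gap: you never complete the main case, and the tools you propose for it do not suffice. Deleting only the triangle edges leaves each pendant edge $x_iy_i$ colored with a 2-color (each $x_i$ has degree 1 in $H$, so color 1 is forbidden there), and then, as you yourself note, a triangle edge such as $x_1x_2$ can see seven colored edges ($x_1y_1$, $x_2y_2$, $x_3y_3$, and the two further edges at each of $y_1,y_2$), so $A^2(x_1x_2)$ can be empty. The fixes you gesture at do not rescue this: permuting the 2-colors of $\phi$ is vacuous here, because $G$ minus the triangle edges is connected (the permutation trick in Lemmas~\ref{lem:cutedge} and~\ref{lem:cut} works only because the cut splits the graph into components whose colors can be permuted independently); and "recolor strategically / invoke SDR after a case analysis" is precisely the content that is missing. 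A related minor error: in your degree analysis, when one $x_i$ is a 2-vertex the edge-cut $\{x_2y_2, x_3y_3\}$ is \emph{not} incompatible with Lemma~\ref{lem:cut} — it forces $y_2=y_3$ to be a 2-vertex, i.e.\ $G=K_4^-$, which must then be colored directly.

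The paper's proof supplies two ideas you are missing. First, it proves as a separate claim (by a delicate recoloring argument using $G-x_1$) that every external neighbor $y_i$ is a 3-vertex; you never address the degrees of the $y_i$ at all, yet this is essential for what follows. Second — and this is the decisive trick — it deletes the three triangle \emph{vertices}, not just the triangle edges. In $G - x_1 - x_2 - x_3$ each $y_i$ has degree 2, so in any good coloring of that graph no edge incident to a $y_i$ carries color 1. Imposing this coloring on $G$, one may color all three pendant edges $x_iy_i$ with color 1 (legal: each joins two 3-vertices of $G$, the $y_i$ are distinct so these edges form a matching, and no conflicting 1 appears at the $y_i$). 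Now each triangle edge sees three edges colored 1 and at most four 2-colored edges (the two edges at each of its two external neighbors), hence has at least three available 2-colors, and the three mutually-seeing triangle edges are colored greedily. Without this vertex-deletion-plus-1-coloring device, your $A^2$ counts cannot be pushed through, so the proposal as written does not constitute a proof.
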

		
		\begin{proof}
			Suppose on the contrary that $x_1x_2x_3x_1$ is a cycle of length 3.  For each $i \in [3]$, let $y_i$ be a neighbor of $x_i$ not on the cycle (if it exists).
			
			We first show that each $x_i$ is a 3-vertex.  Clearly, not all are 2-vertices otherwise $G = K_3$ and we easily provide a good coloring of $G$.  If only say $x_3$ is a 3-vertex, then $x_3y_3$ is a cut-edge contradicting Lemma \ref{lem:cutedge}.  If say $x_1$ is a 2-vertex, then   $x_2y_2, x_3y_3$ form an edge-cut of size two, so by Lemma \ref{lem:cut}, $G = K_4^-$ and we can easily provide a good coloring of $G$.
			
			Clearly $|\{y_1,y_2,y_3\}| \ge 2$ otherwise $G = K_4$ and we can easily provide a good coloring of $G$.  If say $y_2 = y_3$, then $y_2$ must be a 3-vertex, otherwise $x_1y_1$ is a cut-edge contradicting Lemma \ref{lem:cutedge}.  However, $x_1y_1$ and the edge incident to $y_2$ other than $x_2y_2, x_3y_2$, form an edge-cut of size two, so by Lemma \ref{lem:cut}, $G$ has at most 5 vertices and we can easily provide a good coloring of $G$.  So the $y_i$'s all exist and are all distinct.

			\begin{claim}
				Each $y_i$ is a 3-vertex.
			\end{claim}
			
			\begin{proof}
				Suppose $y_1$ is a 2-vertex where $N(y_1) = \{x_1,z_1\}$.  Then $G - x_1$ has a good coloring by the minimality of $G$.  Note that $U(y_1), U(x_2)$, and $U(x_3)$ only contain 2-colors.  So we can impose this coloring onto $G$ and color $x_1x_2$ with a 1 to give a good partial coloring of $G$, say $\phi$.  Note that $|A_\phi(x_1y_1)| \ge 1$ and $|A_\phi(x_1x_3)| \ge 1$.  So we may assume equality holds and $A_\phi(x_1y_1) = A_\phi(x_1x_3) = \{2_7\}$.  So without loss of generality, $\phi(y_1z_1) = 2_1$, $U_\phi(z_1) = \{2_1, 2_2, 2_3\}$, $U_\phi(x_2) = \{2_4, 2_5\}$, and $U_\phi(y_2) = \{2_2,2_3,2_4\}$.  Note $x_2y_2$ and $x_2x_3$ are colored with $2_4$ and $2_5$, respectively, which also means $z_1 \neq y_2$ as $y_1z_1$ is colored with $2_1$.
				
				However, if we form a new good partial coloring of $G$ by uncoloring $x_1x_3$ and instead coloring $x_1x_2$ with 1 (call this coloring $\psi$), then as above we must have $A_\psi(x_1y_1) = A_\psi(x_1x_3) = \{2_7\}$.  So $U_\psi(y_3) = \{2_2, 2_3, 2_6\}$.   Note that $y_3$ must be a 3-vertex that is not $z_1$ (as $\psi(y_1z_1) = 2_1$), so we recolor $x_3y_3$ with 1, color $x_1y_1$ with $2_7$ and color $x_1x_3$ with $\psi(x_3y_3) \in \{2_2,2_3,2_6\}$.  So each $y_i$ is a 3-vertex.
			\end{proof}
			
			By the minimality of $G$, $G - x_1 - x_2 - x_3$ has a good coloring in which each $y_i$ is not incident to edges colored with 1.  So we can impose this coloring onto $G$ and color each $x_iy_i$ with 1.  Since $|U(x_1x_2)|, |U(x_2x_3)|, |U(x_1x_3)| \ge 3$, we can greedily color the remaining edges.
		\end{proof}

		\begin{lemma}\label{nine}
			$G$ must have at least 10 vertices.
		\end{lemma}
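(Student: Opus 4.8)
The plan is to argue by contradiction. Suppose $|V(G)| \le 9$. By Lemmas~\ref{lem:delta2}, \ref{lem:cutedge}, \ref{lem:cut}, and \ref{lem:noK3}, together with the minimality of $G$, the graph $G$ is connected, simple, and subcubic, with $\delta(G) \ge 2$, no cut-edge, every edge-cut of size two incident to a common $2$-vertex, and no triangle. I would show that every graph meeting all of these conditions on at most nine vertices admits a good coloring; since $G$ has no good coloring, this is the desired contradiction.

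The first step is to cut down the number of graphs to be examined. Writing $n_2$ and $n_3$ for the numbers of $2$- and $3$-vertices, we have $n_2 + n_3 \le 9$, the quantity $2n_2 + 3n_3$ is even, and $|E(G)| = \tfrac12(2n_2 + 3n_3) \le 13$; this limits the possible degree sequences to a short list. The triangle-free and connectivity conditions then prune the connected realizations severely: there is no cut-edge, the only edge-cuts of size two sit at a $2$-vertex, and no two adjacent vertices share a neighbor. In particular, among the $3$-regular graphs only those on $6$ and $8$ vertices of girth at least four survive (for example $K_{3,3}$), and graphs containing $2$-vertices have correspondingly fewer edges.

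For the remaining graphs I would produce a good coloring explicitly. When $G$ has a $2$-vertex, the structural reductions already developed---suppressing the $2$-vertex, or applying the extension arguments of Lemmas~\ref{lem:cutedge} and \ref{lem:cut} after deleting or contracting a suitable edge---yield a strictly smaller subcubic graph whose good coloring (guaranteed by the minimality of $G$) extends to $G$; with at most $13$ edges and eight available colors, each such extension is routine via SDR. The genuinely rigid cases are the few $3$-regular triangle-free graphs on $6$ and $8$ vertices, which have no $2$-vertices and no small edge-cuts, and so cannot be handled by any cut-reduction; for these I would exhibit a good coloring directly.

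The main obstacle is exactly this rigid family: in a $3$-regular graph on $8$ vertices, which has $12$ edges, an edge can see most of the others, so the seven $2$-colors alone need not suffice, and the single $1$-color---allowed only on edges whose endpoints are both $3$-vertices---must be placed carefully before the remaining edges are completed by SDR. Since the surviving list of graphs is finite and short, the cleanest way to guarantee both completeness of the enumeration and colorability of each graph is to automate the check, which may be carried out by the accompanying code.
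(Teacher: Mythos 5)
Your proposal is correct and takes essentially the same route as the paper: the paper's proof of this lemma is exactly a finite computational check (SAGE code, linked in the Appendix) verifying that every graph on at most nine vertices consistent with the structural lemmas admits a good coloring. Your preliminary hand-reductions (degree-sequence counting, cut and triangle pruning) are reasonable preprocessing, but the decisive step in both your plan and the paper is the automated enumeration-and-coloring check.
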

		
		\begin{proof}
			The proof of this lemma utilizes SAGE.  A link to the proof can be found in the Appendix.
		\end{proof}

		\begin{lemma}\label{lem:no2onC4}
			$G$ has no 4-cycle with a 2-vertex on it.
		\end{lemma}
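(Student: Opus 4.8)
The plan is to assume, for contradiction, that $G$ contains a $4$-cycle $C = x_1x_2x_3x_4x_1$ carrying a $2$-vertex; by symmetry I may take $x_1$ to be the $2$-vertex, so that $N(x_1) = \{x_2, x_4\}$. Since $G$ is simple and has no triangle (Lemma~\ref{lem:noK3}), the cycle $C$ is induced, as neither diagonal $x_1x_3$ nor $x_2x_4$ is an edge.

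First I would reduce to the case that $x_2, x_3, x_4$ are all $3$-vertices. If two of the $x_i$ are $2$-vertices, I obtain a contradiction from the earlier structure lemmas. Two \emph{adjacent} cycle-vertices of degree $2$, say $x_1$ and $x_2$, make $\{x_1x_4, x_2x_3\}$ an edge-cut of size two whose two edges meet no common $2$-vertex, contradicting Lemma~\ref{lem:cut}. Two \emph{opposite} $2$-vertices, $x_1$ and $x_3$, leave only the off-cycle edges at $x_2, x_4$ crossing the cut: if both $x_2, x_4$ are $3$-vertices this is again a forbidden size-two cut (Lemma~\ref{lem:cut}), if exactly one is a $3$-vertex the surviving off-cycle edge is a cut-edge (Lemma~\ref{lem:cutedge}), and the remaining degenerate possibilities force $|V(G)| < 10$, contradicting Lemma~\ref{nine}. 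Having reduced to a unique $2$-vertex, let $y_2, y_3, y_4$ be the off-cycle neighbors of $x_2, x_3, x_4$; triangle-freeness places each $y_i$ off $C$, and any coincidence among the $y_i$ only decreases the number of edges seen below, so I treat them as distinct in the worst case.

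Next I would delete $x_1$: by Lemma~\ref{lem:delta2} and minimality, $G - x_1$ has a good coloring $\phi$, which I impose on $G$, leaving exactly $x_1x_2$ and $x_1x_4$ uncolored. Since $x_1$ is a $2$-vertex both must receive $2$-colors, and since they see each other these colors must be distinct, so it suffices to find an SDR for $A^2(x_1x_2), A^2(x_1x_4)$. A direct count shows each of these edges sees at most seven colored edges; for $x_1x_2$ these are $x_2x_3, x_2y_2, x_3x_4, x_3y_3, x_4y_4$ together with the (at most two) other edges at $y_2$. Hence whenever the two availability sets are not both tight we finish by SDR.

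The main obstacle is the tight case in which $x_1x_2$ and $x_1x_4$ each see all seven $2$-colors. Here I would exploit that $x_2, x_3, x_4$ are $3$-vertices, so the cycle edges $x_2x_3$ and $x_3x_4$ (and $x_3y_3$, when $y_3$ is a $3$-vertex) are \emph{eligible for color $1$}; moreover, because $C$ is induced and $G$ is triangle-free, such an edge has no incident edge already colored $1$, so recoloring it with $1$ is legal and frees its $2$-color. The delicate point is that $x_1x_2$ and $x_1x_4$ share the five edges $x_2x_3, x_2y_2, x_3x_4, x_3y_3, x_4y_4$, so recoloring any common edge frees the \emph{same} color for both and they continue to compete; the resolution is to free colors \emph{separately} on the two sides, recoloring an edge seen only by $x_1x_2$ (incident to $y_2$) and one seen only by $x_1x_4$ (incident to $y_4$), then completing by SDR. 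I expect the bulk of the work, and the genuine difficulty, to lie in checking that these side recolorings are always simultaneously legal across the subcases determined by the degrees of $y_2, y_3, y_4$ and their possible coincidences.
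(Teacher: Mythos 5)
Your overall skeleton matches the paper's: rule out additional $2$-vertices via Lemmas \ref{lem:cut}, \ref{lem:cutedge}, \ref{nine}, delete $x_1$, impose a good coloring of $G - x_1$, and resolve the tight cases by recoloring edges with color $1$ to free $2$-colors. But the tight-case analysis, which you yourself identify as the heart of the matter, has genuine gaps. First, your legality claim is false: you assert that since $C$ is induced and $G$ is triangle-free, the edges $x_2x_3$ and $x_3x_4$ have no incident edge already colored $1$. The correct statement --- and the one the paper uses --- is that edges incident to $x_2$ or $x_4$ cannot be colored $1$ because $x_2, x_4$ are $2$-vertices in $G - x_1$; but $x_3y_3$ \emph{can} be colored $1$ (it is the unique such edge in the configuration), and when it is, recoloring $x_2x_3$ or $x_3x_4$ with $1$ is illegal since they meet $x_3y_3$ at $x_3$. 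The paper's proof is organized around precisely this dichotomy (whether $x_3y_3$ carries color $1$), which your argument never confronts. Second, "recoloring frees its $2$-color" is not automatic: in the doubly tight case where no edge seen by $x_1x_2$ is colored $1$, the seven seen edges carry only six $2$-colors, so some color repeats, and recoloring an edge whose color is repeated frees nothing for $x_1x_2$.

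Third --- and this drives your proof down a dead end --- your "delicate point" is a misconception. If recoloring one \emph{common} edge frees its color $c$ for both $x_1x_2$ and $x_1x_4$, they do not "continue to compete": each then has at least two available $2$-colors, say $\{c, 2_7\}$, and two edges with $2$-element lists always admit an SDR (color one with $c$, the other with $2_7$). Indeed this is exactly how the paper finishes: it recolors the common edge $x_2y_2$ with $1$ and is immediately done by SDR. Your alternative --- recoloring an edge at $y_2$ and another at $y_4$, one step further from the cycle --- is not only unnecessary but unsupported: nothing is known about the far endpoints of those edges (they may be $2$-vertices, or may already lie on $1$-colored edges), so the proposed side recolorings need not be legal; you explicitly defer this check, but it is the actual crux, and there is no reason it should go through. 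Relatedly, treating coincidences among $y_2, y_3, y_4$ as a harmless "worst case" is valid only for the counting step; it fails for the recoloring step, where, e.g., $y_2 = y_3$ would make $x_2y_2$ and $x_3y_3$ adjacent and forbid giving both color $1$. The paper rules out these coincidences structurally (cut-edge, $2$-edge-cut, or Lemma \ref{nine}) before any coloring, and your proof needs to do the same.
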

		
		\begin{proof}
			Let $x_1x_2x_3x_4x_1$ be a 4-cycle in $G$, and let $y_i$ be neighbor of $x_i$ not on this cycle (if it exists).  Suppose $x_1$ is a 2-vertex. If any other $x_i$ is also a 2-vertex, then by symmetry we have two cases, i.e., $x_2$ is a 2-vertex and $x_3$ is a 2-vertex. In the former case, $x_1x_4$ and $x_2x_3$ form an edge-cut of size two, which contradicts Lemma~\ref{lem:cut}. In the latter case, say the neighbour of $x_2$ and $x_4$ outside of the cycle is $y_2$ and $y_4$ respectively, then by Lemma~\ref{lem:cut} we know that $y_2 = y_4$ and it is a $2$-vertex; however, we then have a graph with only 5 vertices and can easily provide a good coloring by Lemma~\ref{nine}. Therefore, $y_2, y_3, y_4$ exist.
			
			Clearly $|\{y_2,y_3,y_4\}| \ge 2$ otherwise $G$ would be a subgraph of the wheel on 5 vertices and we can easily provide a good coloring of $G$ by Lemma~\ref{nine}. If say $y_2 = y_3$, then as above, we can either find a cut-edge contradicting Lemma \ref{lem:cutedge}, or an edge-cut of size two so that by Lemma \ref{lem:cut} $G$ has only 6 vertices and we can easily provide a good coloring of $G$ by Lemma~\ref{nine}.  So the $y_i$'s all exist and are distinct.
			
			By the minimality of $G$, $G - x_1$ has a good coloring.  If we impose this coloring onto $G$, then note that both $x_1x_2$ and $x_1x_4$ see at most seven colored edges.  Furthermore, the only edge with both endpoints in $\{x_1, \dots, x_4, y_1, \dots, y_4\}$ that can be colored 1 is $x_3y_3$.  Suppose first that $|A^2(x_1x_2)| = 0$.  So without loss of generality, $U(y_1) = \{2_1,2_2,2_3\}$, $U(x_2) = \{2_1,2_4\}$, $U(x_3) = \{2_4,2_5,2_6\}$, and $U(x_4) = \{2_5,2_7\}$.  In particular, $y_2$ must be a 3-vertex, $x_1x_2$ is colored with $2_1$ and $x_3x_4$ is colored with $2_5$.  Therefore we can recolor $x_2y_2$ and $x_3x_4$ both with 1, and then color $x_1x_2$ with $2_1$ and $x_1x_4$ with $2_5$.  Since $x_3x_4$ was originally colored with $2_5$ in $G - x_1$, we know that $2_5 \notin U(y_4)$.  So this is a good coloring of $G$.
			
			So we may assume by symmetry that $|A^2(x_1x_2)|, |A^2(x_1x_4)| \ge 1$.   Further, we may assume without loss of generality that equality holds and $A^2(x_1x_2) = A^2(x_1x_4) = \{2_7\}$, else we are done by SDR.  Recall  the only edge with both endpoints in $\{x_1, \dots, x_4, y_1, \dots, y_4\}$ that can be colored 1 is $x_3y_3$
			
			If $x_3y_3$ is colored with 1, then $|U(y_2)| = 3$ and consists only of 2-colors otherwise  $|A^2(x_1x_2)| \ge 2$.  So $y_2$ must be a 3-vertex, and we can recolor $x_2y_2$ with 1 to obtain a new, good partial coloring of $G$, say $\phi$.  However $|A^2_\phi(x_1x_4)|, |A^2_\phi(x_1x_2)| \ge 2$, and we are done by SDR.  
			
			So $x_3y_3$ is not colored with 1.  We can then recolor $x_3x_4$ with 1 to obtain a new, good partial coloring of $G$.  However, we repeat the same argument for when $x_3y_3$ is colored with 1.  This finishes the proof of the lemma.
		\end{proof}

		\begin{lemma}\label{4cycle}
			$G$ has no 4-cycle.
		\end{lemma}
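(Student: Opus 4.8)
The plan is to mimic the proof of Lemma~\ref{lem:noK3}. Suppose toward a contradiction that $G$ contains a $4$-cycle $x_1x_2x_3x_4x_1$, and for each $i$ let $y_i$ denote the neighbor of $x_i$ off the cycle (when it exists). By Lemma~\ref{lem:no2onC4} every $x_i$ is a $3$-vertex, so all four $y_i$ exist. First I would dispose of the degenerate configurations: since $G$ is triangle-free by Lemma~\ref{lem:noK3}, no two consecutive $y_i$ coincide, and if two opposite ones coincide (say $y_1=y_3$) or if only two of the $y_i$ are distinct, then tracing the argument used in Lemmas~\ref{lem:noK3} and~\ref{lem:no2onC4} produces either a cut-edge (contradicting Lemma~\ref{lem:cutedge}), an edge-cut of size two of the wrong type (contradicting Lemma~\ref{lem:cut}), or a graph on fewer than ten vertices (contradicting Lemma~\ref{nine}). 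Hence the $y_i$ are four distinct vertices. I would then prove, by a recoloring claim of exactly the same flavor as the one inside Lemma~\ref{lem:noK3}, that each $y_i$ is a $3$-vertex; the point is that if some $y_i$ were a $2$-vertex we could delete $x_i$, extend a good coloring of the smaller graph, and recolor around the cycle to reach a contradiction.

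With all of $x_1,\dots,x_4,y_1,\dots,y_4$ now $3$-vertices, consider $G' = G - \{x_1,x_2,x_3,x_4\}$, which by the minimality of $G$ admits a good coloring $\phi$. In $G'$ each $y_i$ is a $2$-vertex, so no edge incident to $y_i$ is colored $1$; consequently we may color each pendant edge $x_iy_i$ with color $1$ without violating goodness, since these four edges form a matching and each is adjacent only to cycle edges and to the $2$-colored edges at $y_i$. It remains to color the four cycle edges. A direct check shows that $x_1x_2,x_2x_3,x_3x_4,x_4x_1$ pairwise see one another, so they must receive four distinct $2$-colors; moreover each cycle edge now sees only the (at most four) $2$-colored edges incident to the two nearby $y_i$'s, so $|A^2(e)| \ge 3$ for each cycle edge $e$. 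Verifying Hall's condition for these four sets of size at least three inside the seven $2$-colors then finishes the argument by SDR, except in the single rigid case described below.

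The main obstacle is precisely the failure of Hall's condition: four available sets of size at least three can have union of size exactly three only when all four coincide with a common $3$-element set $T$, which forces the colors appearing at $y_1$ and $y_3$ to agree, the colors at $y_2$ and $y_4$ to agree, and these two pairs to partition $\{2_1,\dots,2_7\}\setminus T$. In this situation the four cycle edges genuinely cannot be completed from $T$ alone while the pendants remain colored $1$, and simply rerouting a pendant to a $2$-color is dangerous because a pendant edge reaches into $G'$ and may have few free $2$-colors. I expect this to be the crux, and I would resolve it with an Andersen-style alternating-path (Kempe-type) recoloring: using a color of $T$ together with one of the colors seen at some $y_i$, I would swap along a two-colored chain in $\phi$ to change the set of $2$-colors that a cycle edge sees, thereby breaking the coincidence of the four available sets and recovering an SDR. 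This is the step that requires genuine care, and it is where the techniques borrowed from~\cite{A1} enter.
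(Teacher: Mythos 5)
Your proposal correctly reduces the problem to the rigid configuration where all four cycle edges have the same $3$-element available set $T$, forcing $U(y_1)=U(y_3)$ and $U(y_2)=U(y_4)$ with these two pairs partitioning $\{2_1,\dots,2_7\}\setminus T$. But this is exactly where the argument has a genuine gap: a good coloring of $G-V(C)$ with $U(y_1)=U(y_3)$ and $U(y_2)=U(y_4)$ is perfectly consistent, so no contradiction is available from the smaller graph, and the deferred ``Andersen-style alternating-path recoloring'' is not a tool that exists for this kind of coloring. For $2$-colors (induced matchings), swapping two colors along a component of their union does \emph{not} preserve the distance-$3$ condition -- this failure of Kempe-chain arguments is precisely why strong edge-coloring results are hard -- and no concrete recoloring is exhibited. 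Note also that local repairs genuinely fail here: every cycle edge sees every pendant edge $x_iy_i$, so recoloring a pendant with any $2$-color either removes a color of $T$ from all four cycle edges simultaneously or leaves the rigid situation unchanged. The paper resolves this by a structurally different move: it applies minimality not to $G-V(C)$ but to $G-V(C)+y_2y_4$ (after first showing one of the diagonal pairs, say $y_2,y_4$, is nonadjacent). In that graph the edges at $y_2$ and at $y_4$ see each other, so a good coloring can never have $U(y_2)$ and $U(y_4)$ sharing a $2$-color -- which kills your rigid case outright -- and the color $\phi(y_2y_4)$ is moreover guaranteed to be placeable on both $x_2y_2$ and $x_4y_4$ when those pendants cannot receive color $1$.

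A second, smaller gap: you assert that each $y_i$ is a $3$-vertex ``by a recoloring claim of exactly the same flavor as the one inside Lemma~\ref{lem:noK3},'' but no such claim is proved, and the analogy is doubtful -- the triangle argument leans heavily on the deleted vertex having two mutually adjacent neighbors, which is unavailable for a $4$-cycle, and straightforward counting after deleting $x_i$ leaves edges seeing up to seven $2$-colors. Tellingly, the paper never proves this claim at all; instead its Cases 2--5 handle the possibilities that some $y_i$ are $2$-vertices (or already see color $1$), which is a substantial portion of the actual proof. So both the preparatory step and the crux of your plan are missing, and the crux in particular appears unfixable without importing something like the added-edge trick.
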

		
		\begin{proof}
			Let $x_1x_2x_3x_4x_1$ be a 4-cycle $C$ in $G$, and let $y_i$ be neighbor of $x_i$ not on this cycle.  By Lemma \ref{lem:no2onC4} each $y_i$ exists.  
			
			\begin{claim}\label{alldiff}
				The $y_i$'s are all distinct.
			\end{claim}
			
			\begin{proof}
				Since $G$ is subcubic, $|\{y_1,y_2,y_3,y_4\}| \ge 2$.  Further, we claim that $|\{y_1,y_2,y_3,y_4\}| \ge 3$. Otherwise, by Lemma~\ref{lem:noK3}, we must have  $y_1 = y_3$ and $y_2 = y_4$. If $y_1y_2$ is an edge then we have a graph on $6$ vertices and thus we can easily provide a good coloring by Lemma~\ref{nine}; if $y_1y_2$ is not an edge then we have an edge-cut of size two and by Lemma~\ref{lem:cut} $y_1$ and $y_2$ must have a common neighbour of degree two, which gives us the graph in Figure~\ref{ex-1}. If $|\{y_1,y_2,y_3,y_4\}| = 3$, then by Lemma \ref{lem:noK3}, we may assume $y_2 = y_4$. 
				
				We have two cases depending on whether or not $y_1y_3$ exists. 
				
				\textbf{Case 1:} $y_1y_3$ exists. Note that each $y_i$ must be a 3-vertex. Otherwise, if $y_2$ is a $2$-vertex, then $x_1y_1$ and $x_3y_3$ form an edge-cut of size two, which contradicts Lemma~\ref{lem:cut} since we know $y_1 \neq y_3$. If $y_3$ (by symmetry $y_1$) is a $2$-vertex, let $N(y_1) = \{x_1, y_3, w_1\}$ and $N(y_2) = \{x_2, x_4, w_2\}$,  we have by Lemma \ref{lem:cut} $w_1 = w_2$ and by Lemma~\ref{nine} we can easily provide a good coloring for the resulting graph of $8$ vertices.
				
				By the minimality of $G$, $G - V(C)$ has a good coloring in which $U(y_i)$ consists of only 2-colors for each $i$.  If we impose this coloring onto $G$, then we can color $x_1y_1, x_3y_3$, and $x_4y_2$ with 1.  In this good partial coloring we have $$|A^2(x_1x_2)|, |A^2(x_2x_3)|, |A^2(x_3x_4)|, |A^2(x_1x_4)|, |A^2(x_2y_2)| \ge 4.$$ So we must have equality and each of these sets must be the exact same, else we are done by SDR.  Without loss of generality, we may assume they are $\{2_4, 2_5, 2_6, 2_7\}$. Further, since $x_1x_2$ needs to see $2_1, 2_2, 2_3$, we may assume $U(y_1) = \{1, 2_1, 2_2\}$, $U(y_2) = \{1, 2_3\}$, and since $x_3x_4$ also has to see $\{2_1, 2_2, 2_3\}$, we assume $U(y_3) = \{1, 2_1,2_2\}$. However, this cannot happen as $y_1y_3$ is an edge in $G$.
				
				\textbf{Case 2:} $y_1y_3$ does not exist. By the minimality of $G$, $G - V(C) + y_1y_3$ has a good coloring $\phi$.  If $y_1y_3$ is colored with 1, then we can impose this coloring onto $G$ by coloring $x_1y_1$ and $x_3y_3$ with 1. Similarly to Case 1, we may assume $U(y_1) = \{1, 2_1, 2_2\}$, $U(y_2) = \{1, 2_3\}$, $U(y_3) = \{1, 2_1, 2_2\}$. However, this is also not possible since $\phi$ was a good coloring of $G - V(C) + y_1y_3$.  So we may assume $y_1y_3$ is colored with some color that is not $1$, say $2_1$.  We then impose this coloring onto $G$ by coloring $x_1y_1$ and $x_3y_3$ with $2_1$, and $x_1x_2$ and $x_3x_4$ with 1.  In this good partial coloring of $G$ we have $|A^2(x_2x_3)|, |A^2(x_1x_4)|, |A^2(x_2y_2)|, |A^2(x_4y_2)| \ge 3$.  So we must have equality and each of these sets must be exactly the same, else we are done by SDR.  Without loss of generality, we may assume they are $\{2_5, 2_6, 2_7\}$, and further $U(y_1) = \{2_1, 2_2, 2_3\}, U(y_2) = \{2_4\}$, and $U(y_3) = \{2_1, 2_2, 2_3\}$.  However this cannot happen as $y_1y_3$ was added in $G - V(C) + y_1y_3$ to obtain $\phi$.
				\end{proof}
			
			\begin{claim}
				Not both of $y_1y_3$ and $y_2y_4$ are edges in $G$.
			\end{claim}
			
			\begin{proof}
				Suppose on the contrary that $y_1y_3$ and $y_2y_4$ are edges in $G$.  Note that at least three of the $y_i$'s must be 3-vertices as otherwise we get a cut-edge contradicting Lemma \ref{lem:cutedge}, or we have an edge-cut of size two so by Lemma \ref{lem:cut}, $G$ has only 9 vertices and we can easily provide a good coloring by Lemma \ref{nine}.

				We claim that each $y_i$ is a 3-vertex. Suppose not, say $y_4$ is a 2-vertex. By the minimality of $G$, $G - V(C)$ has a good coloring in which each $U(y_i)$ only contains 2-colors.  We impose this coloring onto $G$ and color $x_iy_i$ with 1 for each $i \in [3]$.  Then in this good partial coloring of $G$ we have $|A^2(x_1x_2)|, |A^2(x_2x_3)| \ge 3$, $|A^2(x_3x_4)|, |A^2(x_1x_4)| \ge 4$, and $|A^2(x_4y_4)| \ge 5$, and we are done by SDR.
				
				So we may assume each $y_i$ is a 3-vertex.  Again by the minimality of $G$, $G - V(C)$ has a good coloring in which each $U(y_i)$ only contains 2-colors.  We impose this coloring onto $G$ and color $x_iy_i$ with 1 for each $i \in [4]$.  Then in this good partial coloring of $G$ we have $|A^2(x_1x_2)|, |A^2(x_2x_3)|, |A^2(x_3x_4)|, |A^2(x_1x_4)| \ge 3$.  So we must have equality and each of these sets must be the exact same, else we are done by SDR. Without loss of generality, we may assume they are $\{2_5,2_6,2_7\}$, and further $U(y_1) = U(y_3) = \{1,2_1,2_2\}$ and $U(y_2) = U(y_4) = \{1,2_3,2_4\}$.  However this cannot happen as $y_1y_3$ and $y_2y_4$ are edges in $G$.  This proves the claim.
			\end{proof}

			Without loss of generality, $y_2y_4$ does not exist. By the minimality of $G$, the graph $G' = G - V(C) + y_2y_4$ has a good coloring, say $\phi$, in which $U_\phi(y_1)$ and $U_\phi(y_3)$ consist only of 2-colors.  Impose this coloring onto $G$, ingoring for the moment, the color on $\phi(y_2y_4)$ in $G'$.   So the only edges in $G$ that are uncolored are the eight edges incident to at least one $x_i$.
			
			We now consider how many $x_iy_i$'s can we color with 1 to extend our good partial coloring of $G$.   Note that although $U_\phi(y_1)$ and $U_\phi(y_3)$ consist only of 2-colors, we may not be able to color either $x_1y_1$ or $x_3y_3$ with 1, as $y_1$ or $y_3$ (or both) may be 2-vertices in $G$.  So the only reason we cannot color $x_iy_i$ with 1 for $i \in \{1,3\}$ is because $y_i$ is a 2-vertex.  If we cannot color $x_iy_i$ with 1 for $i \in \{2,4\}$, then it is either because $y_i$ is a 2-vertex or because $1 \in U_\phi(y_i)$.
			
			\textbf{Case 1:}
			If we can color each $x_iy_i$ with 1, then in this good partial coloring, say $\psi$, we have $$|A^2_\psi(x_1x_2)|, |A^2_\psi(x_2x_3)|, |A^2_\psi(x_3x_4)|, |A^2_\psi(x_1x_4)| \ge 3.$$  So we must have equality and each of these sets must be the exact same, else we are done by SDR.  So without loss of generality, we may assume they are $\{2_5,2_6,2_7\}$, and further $U_\psi(y_1) = U_\psi(y_3) = \{1,2_1,2_2\}$ and $U_\psi(y_2) = U_\psi(y_4) = \{1,2_3,2_4\}$.  However this cannot happen as we added the edge $y_2y_4$ in $G'$.
			
			\textbf{Case 2:}
			Suppose we can color exactly three of the $x_iy_i$'s with 1.  We do so and call this good partial coloring $\rho$.  Without loss of generality, either $x_2y_2$ or $x_3y_3$ cannot be colored with 1.  
			
			\textbf{Case 2.1:} $x_2y_2$ cannot be colored with 1. This is either because $y_2$ is a 2-vertex or because $1 \in U_\phi(y_2)$.  Then $|A^2_\rho(x_1x_2)|,|A^2_\rho(x_2x_3)| \ge 4$, and $|A^2_\rho(x_3x_4)|, |A^2_\rho(x_4x_1)| \ge 3$, and $|A^2_\rho(x_2y_2)| \ge 2$.  So the union of all five of these sets must have cardinality at most 4, else we are done by SDR.  In particular, we may assume $A^2_\rho(x_1x_2) = A^2_\rho(x_2x_3) = \{2_4, 2_5, 2_6, 2_7\}$ so that without loss of generality $U_\rho(y_1) = \{1,2_1,2_2\}$ and $U_\rho(y_2) \subseteq \{1, 2_3\}$.  However since $y_2y_4$ was an edge in $G'$, $2_3 \notin U_\rho(y_4)$, which means that $2_3 \in A^2_\rho(x_1x_4)$ and $|A^2_\rho(x_1x_4) \cup A^2_\rho(x_1x_2)| \ge 5$, and we are done by SDR.
			
			\textbf{Case 2.2:} $x_3y_3$ cannot be colored with 1. Then $y_3$ must be a 2-vertex in $G$.  Then $|A^2_\rho(x_1x_2)|,|A^2_\rho(x_1x_4)| \ge 3$, and $|A^2_\rho(x_2x_3)|, |A^2_\rho(x_3x_4)|,  |A^2_\rho(x_3y_3)| \ge 4$.  So we may assume $A^2_\rho(x_2x_3) = A^2_\rho(x_3x_4) = \{2_4, 2_5, 2_6, 2_7\}$, else we are done by SDR.  However, this implies $U_\rho(y_2) = U_\rho(y_4) =  \{1, 2_1, 2_2\}$ and $U_\rho(y_3) = \{2_3\}$, which cannot happen as we added $y_2y_4$ in $G'$.
			
			\textbf{Case 3:}
			Suppose that we can only color two of the $x_iy_i$'s with 1.  We do so and call this good partial coloring $\sigma$.  Without loss of generality either both $x_1y_1$ and $x_4y_4$, or both $x_2y_2$ and $x_4y_4$, or both $x_1y_1$ and $x_3y_3$ are the only edges that can be colored 1.

			\textbf{Case 3.1:} Only $x_1y_1$ and $x_4y_4$ can be colored with 1.  Note that $y_3$ must be a 2-vertex and $y_2$ is either a 2-vertex or $1 \in U_\phi(y_2)$.  We can also further extend $\sigma$ by coloring $x_2x_3$ with 1, and we will refer to this new coloring as $\sigma$.  Note that $|A^2_\sigma(x_1x_2)|, |A^2_\sigma(x_3x_4)|, |A^2_\sigma(x_3y_3)| \ge 4$, $|A^2_\sigma(x_1x_4)| \ge 3$, and $|A^2_\sigma(x_2y_2)| \ge 2$.  So the union of the five sets must cardinality at most 4, else we are done by SDR.  In particular, we may assume $A^2_\sigma(x_1x_2) = \{2_4, 2_5, 2_6, 2_7\}$, so that without loss of generality, $U_\sigma(y_1) = \{1, 2_1, 2_2\}$ and $2_3 \in U_\sigma(y_2)$.  However since $y_2y_4$ was an edge in $G'$, $2_3 \notin U_\sigma(y_4)$ so that $2_3 \in A^2_\sigma(x_1x_4)$ and $|A^2_\sigma(x_1x_2) \cup A^2_\sigma(x_1x_4)| \ge 5$ and we are done by SDR.
			
			\textbf{Case 3.2:} Only $x_2y_2$ and $x_4y_4$ can be colored 1.  Note that $y_1$ and $y_3$ must both be 2-vertices, and furthermore $y_1y_3$ is not an edge of $G$ as otherwise $x_2y_2$ and $x_4y_4$ would form an edge-cut of size two, so by Lemma \ref{lem:cut} $G$ would only have 7 vertices and we can easily provide a good coloring of $G$ by Lemma~\ref{nine}. Note that $|A^2_\sigma(x_1y_1)|, |A^2_\sigma(x_3y_3)| \ge 4$.  Since we have only seven 2-colors available, there must be a 2-color available on both $x_1y_1$ and $x_3y_3$, say $2_1$.  So we can further extend $\sigma$ by coloring these edges with $2_1$, and continue to refer to this good partial coloring as $\sigma$.  Note that $|A^2_\sigma(x_1x_2)|, |A^2_\sigma(x_2x_3)|, |A^2_\sigma(x_3x_4)|, |A^2_\sigma(x_1x_4)| \ge 3$. So we must have equality and each of these sets must be the exact same, else we are done by SDR.  So without loss of generality, we may assume they are $\{2_5,2_6,2_7\}$, and further $U_\sigma(y_1) = U_\sigma(y_3) =  \{2_1, 2_2\}$ and $U_\sigma(y_2) = U_\sigma(y_4) = \{1, 2_3,2_4\}$.  However this cannot happen as $y_2y_4$ was an edge in $G'$.
			
			\textbf{Case 3.3:} Only $x_1y_1$ and $x_3y_3$ can be colored with 1.  Note that for each $i \in \{2,4\}$ either $y_i$ is a 2-vertex of $1 \in U_\phi(y_i)$, and furthermore $\phi(y_2y_4)$ must have been a 2-color, say $2_1$.  So we can further extend $\sigma$ by coloring $x_2y_2$ and $x_4y_4$ with $2_1$, and continue to refer to this good partial coloring as $\sigma$.  Note that $|A^2_\sigma(x_1x_2)|, |A^2_\sigma(x_2x_3)|, |A^2_\sigma(x_3x_4)|, |A^2_\sigma(x_1x_4)| \ge 3$. So we must have equality and each of these sets must be the exact same, else we are done by SDR.  So without loss of generality, we may assume they are $\{2_5,2_6,2_7\}$, and further $U_\sigma(y_1) = U_\sigma(y_3) =  \{2_2, 2_3\}$ and $\{2_1,2_4\} \subseteq U_\sigma(y_2)$, and $\{2_1,2_4\} \subseteq U_\sigma(y_4)$. However this cannot happen as $y_2y_4$ was an edge in $G'$.


			\textbf{Case 4:}
			Suppose that we can color only one of the $x_iy_i$'s with 1.  We do so and call this good partial coloring $\tau$.    Without loss of generality, either only $x_2y_2$ or only $x_3y_3$ can be colored with 1.  So as a result $y_1$ is a 2-vertex in all the following, and $y_4$ is either a 2-vertex or $1 \in U_\phi(y_4)$.
			
			\textbf{Case 4.1:} Only $x_2y_2$ can be colored with 1, which implies that $y_3$ is also a 2-vertex.  Note that $y_1y_3$ cannot be an edge, else $x_2y_2$ and $x_4y_4$ would form an edge-cut of size two, so by Lemma \ref{lem:cut} $G$ would only have 7 vertices and we can easily provide a good coloring of $G$ by Lemma~\ref{nine}. Note that $|A^2_\tau(x_1y_1)|, |A^2_\tau(x_3y_3)| \ge 4$.  Since we have only seven 2-colors available, there must be a 2-color available on both $x_1y_1$ and $x_3y_3$, say $2_1$.  So we can further extend $\tau$ by coloring these edges with $2_1$, and continue to refer to this good partial coloring as $\tau$.  Note that $|A^2_\tau(x_1x_2)|, |A^2_\tau(x_2x_3)| \ge 3$, $|A^2_\tau(x_3x_4)|, |A^2_\tau(x_1x_4)| \ge 4$, and $|A^2_\tau(x_4y_4)| \ge 1$.  So the union of all five of these sets must have cardinality at most 4, else we are done by SDR.  In particular, we may assume $A^2_\tau(x_3x_4) = A^2_\tau(x_1x_4) = \{2_4, 2_5, 2_6, 2_7\}$  so that without loss of generality $U_\tau(y_1) = \{2_1,2_2\}$, and $2_3 \in U_\tau(y_4)$.  However since $y_2y_4$ exists in $G'$, $2_3 \notin U_\tau(y_2)$, which implies $2_3 \in A^2_\tau(x_1x_2)$ and $|A^2_\tau(x_1x_4) \cup A^2_\tau(x_1x_2)| \ge 5$, and we are done by SDR.

			\textbf{Case 4.2:} Only $x_3y_3$ can be colored with 1, so that $y_2$ is either a 2-vertex or $1 \in U_\phi(y_2)$.  Since $x_2y_2$ and $x_4y_4$ cannot be colored with 1, then $\phi(y_2y_4)$ was a 2-color, say $2_1$.  So we can further extend $\tau$ by coloring $x_2y_2$ and $x_4y_4$ with $2_1$.  In this good partial coloring of $G$ which we will continue to call $\tau$, $|A^2_\tau(x_1x_2)|,|A^2_\tau(x_4x_1)| \ge 4$,  and $|A^2_\tau(x_2x_3)|$, $|A^2_\tau(x_3x_4)|, |A^2_\tau(x_1y_1)| \ge 3$.  So we may assume without loss of generality that $A^2_\tau(x_1x_2) = A^2_\tau(x_4x_1) = \{2_4, 2_5, 2_6, 2_7\}$, else we are done by SDR.  However this implies without loss of generality $U_\tau(y_1) = \{2_2\}, \{2_1, 2_3\} \subseteq U_\tau(y_2)$ and $\{2_1, 2_3\} \subseteq U_\tau(y_4)$.  However this cannot happen as $y_2y_4$ was added in $G'$.

			\textbf{Case 5:}
			Suppose we cannot color any $x_iy_i$ with 1.  Then $y_1$ and $y_3$ must each be 2-vertices, for each $i \in \{2,4\}$, either $y_i$ is a 2-vertex or $1 \in U_\phi(y_i)$.  Note that if $y_1y_3$ was an edge, then $x_2y_2$ and $x_4y_4$ would form an edge-cut of size two, so by Lemma \ref{lem:cut} $G$ would only have 7 vertices and we can easily provide a good coloring of $G$ by Lemma~\ref{nine}.  Further, because we cannot color $x_2y_4$ or $x_4y_4$ with 1, $\phi(y_2y_4)$ was a 2-color, say $2_1$.  So we can extend the coloring $\phi$ onto $G$ by coloring $x_2y_2$ and $x_4y_4$ with $2_1$, and color $x_1x_2$ and $x_3x_4$ with 1.  In this good partial coloring of $G$, $|A^2(x_2x_3)|, |A^2(x_1x_4)| \ge 4$ and $|A^2(x_1y_1)|, |A^2(x_3y_3)| \ge 3$, so we are done by SDR.
		\end{proof}

		
		
		
		
		
		
		\begin{lemma}\label{distance4}
			A pair of $2$-vertices are at distance at least $4$.
		\end{lemma}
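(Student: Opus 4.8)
The goal is to prove Lemma~\ref{distance4}: any two 2-vertices in $G$ are at distance at least $4$. I would proceed by contradiction, assuming two 2-vertices $u,v$ are at distance at most $3$, and then use the minimality of $G$ together with the available-color counting machinery (the quantities $A^2(\cdot)$) already developed for the 4-cycle lemmas. The strategy mirrors Lemmas~\ref{lem:no2onC4} and~\ref{4cycle}: delete a small set of edges/vertices near $u$ and $v$, invoke minimality to get a good coloring of the smaller graph, impose it onto $G$, and then show the few uncolored edges incident to $u,v$ and the short $u$--$v$ path can always be colored, either greedily, by recoloring, or by an SDR argument.

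First I would split into cases according to the exact distance $\operatorname{dist}(u,v) \in \{1,2,3\}$, and within each case according to the degrees of the internal vertices of a shortest $u$--$v$ path. Because $\delta(G)\ge 2$ (Lemma~\ref{lem:delta2}) and $G$ has girth at least $5$ (Lemmas~\ref{lem:noK3} and~\ref{4cycle} rule out $3$- and $4$-cycles), the local structure around a short path joining two 2-vertices is quite constrained: the internal vertices are 3-vertices (else we would produce an edge-cut of size at most two and contradict Lemma~\ref{lem:cutedge} or Lemma~\ref{lem:cut}), and no short cycles close up the neighborhood. In each case I would delete the $u$--$v$ path (and possibly $u,v$ themselves), apply minimality, and count: since $u$ and $v$ each have degree $2$, the edges incident to them see relatively few colored edges, so the $A^2$-values of the uncolored edges tend to be large (typically $\ge 3$ or $\ge 4$), which is exactly the regime where the SDR conclusion applies.

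The key mechanism I expect to reuse is the following: when an uncolored edge $e$ is incident to a 2-vertex, it sees at most five other edges, so $|A^2(e)|\ge 2$ before recoloring and often more after we free up colors by coloring an appropriate $x_iy_i$ with the 1-color. As in Lemma~\ref{4cycle}, when the SDR bound is tight I would extract exact color sets $U(\cdot)$ on the boundary vertices and then derive a contradiction with an edge or added edge in the reduced graph, or recolor some edge with $1$ (legitimately, since both endpoints would be 3-vertices) to strictly increase the relevant $A^2$-values and finish by SDR.

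The main obstacle will be the case $\operatorname{dist}(u,v)=1$, i.e.\ $u$ and $v$ are adjacent 2-vertices: then $uv$ together with the two other edges at $u$ and $v$ forms a very tight configuration, and the edge $uv$ sees only a handful of edges, so forbidden-color collisions must be ruled out by hand. I anticipate that in this subcase deleting $uv$ and adding pendant structure (as in the proof of Lemma~\ref{lem:cut}) so that the reduced graphs inherit good colorings, then permuting the $2$-colors on the two sides to make the boundary palettes disjoint enough, is what makes the coloring extend; the bookkeeping of which of the seven $2$-colors may appear at each boundary vertex is where the real work lies, but no genuinely new idea beyond the $A^2$-counting and color-permutation technique should be required.
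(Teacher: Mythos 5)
Your skeleton is the paper's: case on the distance ($1$, $2$, or $3$), observe that the internal path vertices and the outer neighbours must be $3$-vertices, delete the whole path, take a good coloring of the smaller graph (adding an edge between the two outer neighbours when they are non-adjacent), place color $1$ on the pendant edges freed by the deletion, and finish by SDR with occasional recoloring by $1$. But two concrete pieces of your plan are wrong. First, your stated ``key mechanism'' is false: in a subcubic graph an uncolored edge incident to a $2$-vertex can see up to nine other edges (and six even when both endpoints are $2$-vertices), not ``at most five,'' so incidence to a $2$-vertex does not by itself give $|A^2(e)|\ge 2$. The workable bounds come from the particular deletion pattern together with the pendant edges colored $1$, and they are configuration-dependent; in the hardest configuration (distance $3$ with the outer neighbours $u_4,u_6$ non-adjacent) the paper's counts are so tight that, after a blocked SDR, it must recolor edges in the second neighbourhood (at $u_9,\dots,u_{12}$) with $1$, uncolor $u_1u_4$ and $u_2u_6$, and rerun the analysis. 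Your closing claim that ``no genuinely new idea'' is needed beyond counting and permutation glosses over precisely this step.

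Second, your plan for the adjacent case would fail as written, and it targets the wrong difficulty. You propose deleting $uv$, adding pendant structure ``as in Lemma~\ref{lem:cut},'' and permuting the $2$-colors on ``the two sides''; but $G-uv$ is connected (Lemma~\ref{lem:cutedge} forbids cut-edges), so there are no two components whose colorings can be permuted independently --- that freedom in Lemma~\ref{lem:cut} comes exactly from the fact that the $2$-edge-cut disconnects the graph. Moreover, distance $1$ is the trivial case, not ``the main obstacle'': coincident outer neighbours would put $u,v$ on a triangle, contradicting Lemma~\ref{lem:noK3}, while distinct outer neighbours make $\{uu_1,vv_1\}$ an edge-cut of size two whose edges share no $2$-vertex, contradicting Lemma~\ref{lem:cut}. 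The substantive content of this lemma lives at distances $2$ and $3$, where the added edge between the outer neighbours is used constructively, not only to manufacture contradictions: if it receives a $2$-color, that color is copied onto both pendant edges $u_1v_1$ and $u_2v_2$ (legal because these edges are at distance at least $3$ in $G$); if it receives color $1$, the resulting disjointness of the outer palettes is what the subsequent tight-SDR analysis exploits. Your proposal gestures at the added edge but never says how its color is transported back into $G$, and that transfer is the step that makes these cases close.
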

		
		\begin{proof}
			We first show there cannot be adjacent $2$-vertices. Let $u,v$ be adjacent $2$-vertices. Let $N(u) = \{u_1,v\}$ and $N(v) = \{u, v_1\}$. We delete $u,v$ from $G$ to obtain $G'$ and a good coloring $f$ on $G'$. If $u_1 = v_1$, then $uu_1$ and $vu_1$ see at most three colors respectively, and $uv$ see one color. Therefore, we can extend $f$ to a good coloring on $G$ by SDR.
			
			We then show $2$-vertices cannot be at distance two. Suppose not, $u_1,u_2$ are $2$-vertices at distance two. Let $N(u_1) = \{v_1, u_3\}$, $N(u_2) = \{v_2, u_3\}$. We know $u_3, v_1, v_2$ have degree three by Step 1. Let $N(u_3) = \{u_1, u_2, v_3\}$. By Lemma~\ref{lem:noK3}, there are no triangles and thus $v_3 \neq v_2$, $v_3 \neq v_1$, $v_1 \neq u_2$, and $v_2 \neq u_1$. Moreover, $v_1 \neq v_2$ by Lemma. 
			
			\textbf{Case 1:} $v_1v_2 \in E(G)$. We delete $u_1, u_2, u_3$ from $G$ to obtain $G'$ and a good coloring $f$ on $G'$. Since $v_1, v_2$ are both degree three vertices, we can recolor $v_1v_2$ with $1$. 
			
			\textbf{Case 1.1:} $v_3$ has degree two. Then $u_1v_1, u_2v_2, u_1u_3, u_2u_3, u_3v_3$ see at most $4,4,2,2,3$ colors and thus have at least $3,3,5,5,4$ available colors respectively. Therefore, we can extend $f$ to a good coloring on $G$ by SDR.
			
			\textbf{Case 1.2:} $v_3$ has degree three. Then we know $v_3$ was a degree two vertex in $G'$ and thus can color $v_3u_3$ with $1$. Moreover, $u_1v_1, u_2v_2, u_1u_3, u_2u_3$ see at most $4,4,3,3$ colors and thus have at least $3,3,4,4$ available colors respectively. Therefore, we can extend $f$ to a good coloring on $G$ by SDR.
			
			\textbf{Case 2:} $v_1v_2 \notin E(G)$. Then we delete $u_1, u_2, u_3$ from $G$ and add the edge $v_1v_2$ to obtain $G'$ and a good coloring $f$ on $G'$.
			
			\textbf{Case 2.1:} $f(v_1v_2) \in \{2_1, 2_2, 2_3, 2_4, 2_5, 2_6, 2_7\}$. Say $f(v_1v_2) = 2_1$. Then since $u_1v_1$ and $u_2v_2$ are at distance at least three, we color $u_1v_1$ and $u_2v_2$ with $2_1$. If $v_3$ has degree two, then $u_1u_3, u_2u_3, u_3v_3$ has at least $3,3,3$ colors available respectively and thus we can extend $f$ to a good coloring on $G$ by SDR; if $v_3$ has degree three, then it was a degree two vertex in $G'$ and thus we can color $u_3v_3$ with $1$, and since $u_1u_3, u_2u_3$ each has at least two available colors, we can extend $f$ to a good coloring on $G$ by SDR.
			
			\textbf{Case 2.2:} $f(v_1v_2) = 1$. Let $N(v_1) = \{u_1, v_3, v_4\}$ and $N(v_2) = \{u_2, v_5, v_6\}$. Then $u_1v_1$ and $u_2v_2$ have at least one color available respectively.
			
			\textbf{Case 2.2.1:} $|A^2_f(u_1v_1) \cup A^2_f(u_2v_2)| = 1$. Say $A^2_f(u_1v_1) \cup A^2_f(u_2v_2) = \{2_1\}$. Then we color $u_1v_1$ and $u_2v_2$ with $2_1$. If $v_3$ has degree two, then $u_1u_3, u_2u_3, u_3v_3$ has at least $3,3,3$ available colors respectively and thus we can extend $f$ to a good coloring on $G$ by SDR; if $v_3$ has degree three, then it was a degree two vertex in $G'$ and thus we can color $u_3v_3$ with $1$, and since $u_1u_3, u_2u_3$ each has at least two available colors, we can extend $f$ to a good coloring on $G$ by SDR.
			
			\textbf{Case 2.2.2:} $|A^2_f(u_1v_1) \cup A^2_f(u_2v_2)| \ge 2$. Say $2_1 \in A^2_f(u_1v_1)$ and $2_2 \in A^2_f(u_2v_2)$. Then we color $u_1v_1$ with $2_1$ and $u_2v_2$ with $2_2$. 
			
			If $v_3$ has degree two, then $u_1u_3, u_2u_3, u_3v_3$ has at least $2,2,2$ available colors respectively. By SDR, the only case for which we cannot extend $f$ to a good coloring on $G$ is when $A^2_f(u_3v_3) = A^2_f(u_1u_3) = A^2_f(u_2u_3)$ and has exactly two available colors. It is impossible as $|\{f(v_1v_3), f(v_1v_4), f(v_2v_5), f(v_2v_6)\}| = 4$, which implies $|A^2_f(u_1u_3) \cup A^2_f(u_2u_3)| \ge 3$.
			
			If $v_3$ has degree three, then it was a degree two vertex in $G'$ and thus we can color $u_3v_3$ with $1$, and since $|\{f(v_1v_3), f(v_1v_4), f(v_2v_5), f(v_2v_6)\}| = 4$, $|A^2_f(u_1u_3) \cup A^2_f(u_2u_3)| \ge 2$ and we can extend $f$ to a good coloring on $G$ by SDR.
			
			Next, we show $2$-vertices cannot be at distance three. Suppose not, let $u_1,u_2$ be two $2$-vertices at distance three. Let $u_1u_3u_5u_2$ be a path of length three connecting $u_1$ and $u_2$. Let $N(u_1) = \{u_3, u_4\}$, $N(u_3) = \{u_1, u_5, u_7\}$, $N(u_5) = \{u_2,u_3,u_8\}$, and $N(u_2) = \{u_5, u_6\}$. By Lemma~\ref{lem:noK3}, there are no triangles, we know that $u_4 \neq u_7$, $u_7 \neq u_8$, $u_6 \neq u_8$. By Lemma~\ref{4cycle}, there are no four cycles and thus $u_4 \neq u_8$ and $u_6 \neq u_7$. Moreover, by Step 2 we know $u_4 \neq u_6$, $u_4, u_6, u_7, u_8$ are $3$-vertices.
			
			\textbf{Case A:} $u_4u_6 \in E(G)$. Let $N(u_4) = \{u_1, u_6, u_9\}$ and $N(u_6) = \{u_2, u_4, u_{11}\}$. We delete $u_1, u_2, u_3, u_5$ from $G$ to obtain $G'$ and a good coloring $f$ on $G'$. Since $u_4, u_6, u_7, u_8$ are $2$-vertices in $G'$, we know $\{f(u_4u_9), f(u_4u_6), f(u_6u_{11})\} \subseteq \{2_1, 2_2, 2_3, 2_4, 2_5, 2_6, 2_7\}$. Thus, we can recolor $u_4u_6$ with $1$, color $u_3u_7$ and $u_5u_8$ with $1$. Say $f(u_4u_9) = 2_1$ and $f(u_6u_{11}) = 2_2$. Then $u_1u_4, u_1u_3, u_3u_5, u_2u_5, u_2u_6$ has at least $3,4,3,4,3$ available colors respectively and the only case for which $f$ cannot be extended to a good coloring on $G$ is when $|A^2_f(u_1u_4) \cup A^2_f(u_1u_3) \cup A^2_f(u_3u_5) \cup A^2_f(u_2u_5) \cup  A^2_f(u_2u_6)| = 4$ by SDR. Then we color $u_2u_5$ and $u_1u_4$ by a color $x \in A^2_f(u_2u_5) \cap A^2_f(u_1u_4)$ and $u_1u_3, u_3u_5,  u_2u_6$ has at least $3,2,2$ available colors respectively. Thus, we can extend $f$ to a good coloring on $G$ by SDR.
			
			\textbf{Case B:} $u_4u_6 \notin E(G)$. We delete $u_1, u_2, u_3, u_5$ from $G$ and add the edge $u_4u_6$ to obtain $G'$ and a good coloring $f$ on $G'$. Since $u_7, u_8$ are $2$-vertices in $G'$, we can color $u_3u_7$ and $u_5u_8$ with $1$. Then $u_1u_4, u_1u_3, u_3u_5, u_2u_5, u_2u_6$ have at least $1,3,3,3,1$ colors available respectively. Let $N(u_7) = \{u_3, u_{13}, u_{14}\}$ and $N(u_8) = \{u_5, u_{15}, u_{16}\}$.
			
			\textbf{Case B.1:} $A^2_f(u_1u_4) \cap A^2_f(u_2u_6) \neq \emptyset$. Say $2_1 \in A^2_f(u_1u_4) \cap A^2_f(u_2u_6)$. Then we color $u_1u_4$ and $u_2u_6$ with $2_1$. Now, $u_1u_3, u_3u_5, u_2u_5$ have at least $2,2,2$ colors available respectively; by SDR, we must have $|A^2_f(u_1u_3)| = |A^2_f(u_3u_5)| = |A^2_f(u_2u_5)| = 2$, say $2_2, 2_3$ are those two available colors and thus we may assume $u_4u_9 = u_8u_{15} = 2_4$, $u_4u_{10} = u_8u_{16} = 2_5$, $u_6u_{11} = u_7u_{13} = 2_6$, $u_6u_{12} = u_7u_{14} = 2_7$. 
			
			If $u_9$ does not have an adjacent edge colored by $1$, then we can recolor $u_4u_9$ with $1$ and we are done by SDR, since $2_4 \in A^2_f(u_1u_3)$ and $|A^2_f(u_1u_3)| \ge 3$. Thus, we may assume by symmetry $u_9, u_{10}, u_{11}, u_{12}$ do not have adjacent edge colored by $1$ and remove the colors in $u_1u_4$ and $u_2u_6$. We know $u_1u_4, u_1u_3, u_3u_5,$ $u_2u_5, u_2u_6$ have at least $3,3,3,3,3$ colors available. 
			
			\textbf{Case B.1.1:} $A^2_f(u_1u_4) \cap A^2_f(u_2u_5) = \emptyset$. Then $|A^2_f(u_1u_4) \cap A^2_f(u_2u_6)| \ge 6$. If $A^2_f(u_1u_3) \cap A^2_f(u_2u_6) = \emptyset$, then we are done by SDR; if $A^2_f(u_1u_3) \cap A^2_f(u_2u_6) \neq \emptyset$, then we color $u_1u_3$ and $u_2u_6$ with a color $x \in A^2_f(u_1u_3) \cap A^2_f(u_2u_6)$ and $u_1u_4, u_3u_5, u_2u_5$ have at least $2,2,2$ colors available respectively. Since $A^2_f(u_1u_4) \cap A^2_f(u_2u_5) = \emptyset$, we are done by SDR.
			
			\textbf{Case B.1.2:} $A^2_f(u_1u_4) \cap A^2_f(u_2u_5) \neq \emptyset$. Then we color $u_2u_5$ and $u_1u_4$ with a color $x \in A^2_f(u_1u_4) \cap A^2_f(u_2u_5)$. Now, $u_1u_3, u_3u_5, u_2u_6$ have at least $2,2,2$ colors available respectively. If $A^2_f(u_1u_3) \cap A^2_f(u_2u_6) \neq \emptyset$, then we color them with a color $y \in A^2_f(u_1u_3) \cap A^2_f(u_2u_6)$ and there is at least one colors left for $u_3u_5$; if $A^2_f(u_1u_3) \cap A^2_f(u_2u_6) = \emptyset$, then $|A^2_f(u_1u_3) \cap A^2_f(u_2u_6)| \ge 4$ and thus we are done by SDR.
			
			\textbf{Case B.2:} $A^2_f(u_1u_4) \cap A^2_f(u_2u_6) = \emptyset$. Then $u_1u_4$ and $u_2u_6$ each must have exactly one color available. To see this, we assume $|A^2_f(u_1u_4)| \ge 2$ and there are two subcases. (a) $A^2_f(u_2u_5) \cap A^2_f(u_1u_4) = \emptyset$. If $A^2_f(u_2u_6) \cap A^2_f(u_1u_3) \neq \emptyset$, then we color $u_2u_6$ and $u_1u_3$ with a color $y \in A^2_f(u_2u_6) \cap A^2_f(u_1u_3)$. Note that then $u_1u_4, u_3u_5, u_2u_5$ have at least $2,2,2$ colors available respectively and we are done by SDR since $|A^2_f(u_2u_5) \cup A^2_f(u_1u_4)| \ge 4$. If $A^2_f(u_2u_6) \cap A^2_f(u_1u_3) = \emptyset$, then $|A^2_f(u_2u_6) \cap A^2_f(u_1u_3)| \ge 4$ and $A^2_f(u_2u_5) \cap A^2_f(u_1u_4) \ge 5|$, and we are done by SDR. (b) $A^2_f(u_2u_5) \cap A^2_f(u_1u_4) \neq \emptyset$.  Then we color $u_2u_5$ and $u_1u_4$ with a color $x \in A^2_f(u_1u_4) \cup A^2_f(u_2u_5)$. Each of $u_1u_3, u_3u_5, u_2u_6$ has at least $2,2,1$ colors available respectively. If $A^2_f(u_1u_3)$ and $A^2_f(u_2u_6)$ has a color in common, then use it and there is at least one color left for $u_3u_5$. If not, then $u_1u_3, u_3u_5, u_2u_6$ have in total at least three available colors and we are done by SDR.
			
			Then we color $u_1u_4$ with $x$ and $u_2u_6$ with $y$ such that $x \neq y$. We must also have each of $u_1u_4, u_2u_6$ see exactly $6$ colors. Thus, we can recolor one edge adjacent to each of $u_4,u_6$ with $1$ and now $u_1u_3, u_3u_5, u_2u_5$ have $3,1,3$ colors available respectively. We are done by SDR.
		\end{proof}

		\begin{lemma}\label{atmostone}
			There is at most one $2$-vertex in $G$.
		\end{lemma}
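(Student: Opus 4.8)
The plan is to argue by contradiction via a vertex-deletion reduction, in the same spirit as the proofs of Lemmas~\ref{lem:noK3} and~\ref{4cycle}. Suppose $G$ has two distinct $2$-vertices. By Lemma~\ref{distance4} every pair of $2$-vertices lies at distance at least $4$, so fixing one $2$-vertex $u$ with $N(u)=\{a,b\}$, every vertex within distance $2$ of $u$ is a $3$-vertex; writing $N(a)=\{u,a_1,a_2\}$ and $N(b)=\{u,b_1,b_2\}$, all of $a,b,a_1,a_2,b_1,b_2$ are $3$-vertices. Since $G$ has girth at least $5$ (Lemmas~\ref{lem:noK3} and~\ref{4cycle}), these six vertices are distinct, $ab\notin E(G)$, and no $a_i$ equals or is adjacent to any $b_j$. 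I would form $G' = G - u + ab$; then $G'$ is a simple subcubic graph in which $a$ and $b$ again have degree $3$, with $|V(G')|+|E(G')| < |V(G)|+|E(G)|$, so by the minimality of $G$ it admits a good coloring $f$.

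Next I would impose $f$ on $G$, delete the color of $ab$, and extend the resulting good partial coloring by coloring the two uncolored edges $au$ and $ub$, splitting on $\gamma = f(ab)$. If $\gamma = 2_i$, then one checks that the set of $2$-colors $au$ sees in $G$ is a subset of those $ab$ saw in $G'$ (the only difference is that $au$ sees the uncolored edge $ub$ in place of the edges at $b_1,b_2$), so $au$ may safely be colored $2_i$; it then remains only to find a color in $A^2(ub)$. If instead $\gamma = 1$ (whence $aa_1,aa_2,bb_1,bb_2$ are all $2$-colors), both $au$ and $ub$ must receive $2$-colors, and I would compute $|A^2(au)|$ and $|A^2(ub)|$ and finish by SDR. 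In each case a direct count shows the extension succeeds unless the relevant available-color sets are forced to be as small as possible and to coincide.

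The main obstacle is exactly these tight subcases, where some uncolored edge is made to see all seven $2$-colors by the colors on $aa_1,aa_2,bb_1,bb_2$ together with the four edges incident to $b_1,b_2$ (resp. $a_1,a_2$). Here I would free a $2$-color by a local recoloring: pick a suitable edge $b_1c$, note that $c$ lies at distance $3$ from $u$ and is therefore a $3$-vertex, so $b_1c$ joins two $3$-vertices and is eligible for the color $1$; recolor $b_1c$ with $1$ (after uncoloring any $1$-edge it conflicts with) and reassign its former $2$-color. Justifying that such recolorings are always available relies on girth at least $5$ and on the surrounding vertices being $3$-vertices, exactly the leverage used in the earlier lemmas, and pushing them through every color pattern is where essentially all of the bookkeeping lies. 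The subtle point, and the reason the conclusion is ``at most one'' rather than ``none,'' is that the deletion above does not visibly use the \emph{second} $2$-vertex; I therefore expect the last tight patterns to be closed either by performing the symmetric reduction at the second $2$-vertex and combining the two, or—as in Lemma~\ref{nine}—by a finite computer verification of the bounded-radius configurations. In every case one produces a good coloring of $G$, contradicting the choice of $G$.
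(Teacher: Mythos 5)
Your reduction is genuinely different from the paper's, and it has a real gap exactly where you admit it does: the tight cases are not closed, and closing them is the entire difficulty. After forming $G' = G - u + ab$ and imposing $f$, the two uncolored edges $au$ and $ub$ are incident, so they cannot both receive $f(ab)$; when $f(ab)=2_i$ you can give $2_i$ to one of them, but the other sees the eight colored edges $aa_1,aa_2,bb_1,bb_2$ plus the four edges at $b_1,b_2$ (resp.\ $a_1,a_2$), and these can cover all six remaining $2$-colors, leaving $A^2=\emptyset$; when $f(ab)=1$ both $A^2(au)$ and $A^2(ub)$ can be empty. Your proposed fix, ``recolor $b_1c$ with $1$ (after uncoloring any $1$-edge it conflicts with),'' is not a legal move: an uncolored conflicting $1$-edge must itself be recolored, and you give no argument that a color exists for it, nor that some recolorable edge always exists (every candidate edge at $a_1,a_2,b_1,b_2$ may already be blocked by incident $1$-edges). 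Your two fallbacks are also not proofs: colorings of the two auxiliary graphs $G-u+ab$ and $G-u'+a'b'$ cannot be ``combined,'' and no finite verification is actually carried out. A further small error: girth at least $5$ (all that is available at this point, since the $5$- and $6$-cycle lemmas come \emph{after} this one) does not rule out edges $a_ib_j$; that would require girth at least $6$.

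The paper avoids all of this by using \emph{both} $2$-vertices, which is precisely why the statement is ``at most one'' rather than ``none.'' It inducts on the distance $N$ between two $2$-vertices (base case $N\le 3$ from Lemma~\ref{distance4}): given $2$-vertices $u_1,u_2$ joined by a path $u_1v_1\cdots v_{N-1}u_2$, it deletes the two $2$-vertices together with the whole path, observes that by the induction hypothesis every third neighbor $w_i$ of a path vertex is a $3$-vertex and becomes a $2$-vertex in $G'$ (so each $w_iv_i$ can be colored $1$), and then colors the path edges greedily from one end to the other, with a short case analysis on how many colors are available at the two ends. Every uncolored edge there has at least three available $2$-colors except possibly the two end edges, and the greedy order handles this; no tight-case recoloring is ever needed. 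Your local, single-vertex reduction would, if completed, prove the stronger statement that $G$ has no $2$-vertex at all, which the paper only obtains afterwards (Lemma~\ref{no2vx}) by deleting an entire cycle through $u$ --- a sign that the purely local argument around one $2$-vertex does not close without substantially more work than you have supplied.
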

		
		\begin{proof}
			Let $N \ge 3$ be an integer. We show that there are no pair of $2$-vertices at distance $N$. We prove this by induction. The base case was done in Step 3 of Lemma~\ref{distance4}. We assume now there are no pair of $2$-vertices at distance at most $N-1$.
			
			Suppose to the contrary that $u_1, u_2$ are $2$-vertices at distance $N$. Let $u_1v_1 \ldots v_{N-1}u_2$ be a path of length $N$ connecting $u_1$ and $u_2$. Let $N(u_1) = \{v_1, u_3\}$, $N(v_1) = \{u_1, w_1, v_2\}$, $\ldots$, $N(v_i) = \{v_{i-1}, w_2, v_{i+1}\}$, $\ldots$, $N(v_{N-1}) = \{v_{N-2}, w_{N-1}, u_2\}$, $N(u_2) = \{v_{N-1}, u_4\}$, $N(u_3) = \{u_1, u_5, u_6\}$, $N(u_4) = \{u_2, u_7, u_8\}$. By Step~1-3 in Lemma~\ref{distance4} and the induction hypothesis, we know $u_3, u_5, u_6, u_4, u_7, u_8$ are $3$-vertices. By Lemma~\ref{lem:noK3}, $u_3 \neq w_1$, $w_i \neq w_{i+1}$ for $i \in [N-2]$, and $w_{N-1} \neq u_2$. By Lemma~\ref{4cycle} and there are no $2$-vertices at distance at most $N-1$, $|\{u_1, u_2, u_3, u_4, w_1, \ldots, w_{N-1}, v_1, \ldots, v_{N-1}\}| = 2N+2$; for the same reason, $\{u_5, u_6\} \cap \{w_1, \ldots, w_{N-1}\} \subseteq \{w_2, w_3\}$, $\{u_7, u_8\} \cap \{w_1, \ldots, w_{N-1}\} \subseteq \{w_{N-3}, w_{N-2}\}$.
			
			We know $u_3u_4 \notin E(G)$ by Lemma~\ref{distance4}. We delete $u_1, u_2, v_1, \ldots, v_{N-1}$ from $G$ to obtain a graph $G'$ and a good coloring $f$ on $G'$. Since each of $\{w_1, \ldots, w_{N-1}\}$ is a $2$-vertex in $G'$, we can color each $w_iv_i$ with $1$. Then $u_1u_3, u_1v_1, v_1v_2, \ldots, v_{N-2}v_{N-1}, v_{N-1}u_2$ have at least $1, 3, \ldots, 3, 1$ colors available respectively. Moreover, we have $u_1u_3$ and $u_1v_1$ either have at least $2,4$ or $3,3$ colors available. To see this, if one of $u_3u_5$ and $u_3u_6$ are colored with $1$, then $u_1u_3$ and $u_1v_1$ have at least $2,4$ colors available respectively; if $u_3u_5$ and $u_3u_6$ are both not colored with $1$, and we cannot recolor one of them with $1$, then each of them is adjacent to an edge colored with $1$ and thus $u_1u_3, u_1v_1$ have $3,3$ colors available respectively. The same conclusion holds for $u_2u_4, u_2v_{N-1}$. Then $u_1u_3, u_1v_1, v_1v_2, \ldots, v_{N-3}v_{N-2}, v_{N-2}v_{N-1}, v_{N-1}u_2$ have $x_1, x_2, 3, \ldots, 3, x_3, x_4$ colors available respectively, where $x_1, x_2, x_3, x_4 = (2,4,4,2), (2,4,3,3), (3,3,4,2), \text{ or }(3,3,3,3)$.
			
			\textbf{Case 1:} $(2,4,4,2)$. Then we start by coloring $u_1u_3$ with a color $y_0 \in A^2_f(u_1u_3)$, $u_1v_1$ with a color $y_1 \in A^2_f(u_1v_1) - y_0$, $v_1v_2$ with a color $y_2 \in A^2_f(v_1v_2) - \{y_0, y_1\}$, $\ldots$, $v_iv_{i+1}$ with a color $y_{i+1} \in A^2_f(v_iv_{i+1}) - \{y_{i-1}, y_i\}$, where $1 \le i \le N-2$. We can always do that since each of $v_iv_{i+1}$ has at least three colors available, where $i \in [N-2]$. Then we color $u_2u_4$ with a color $y \in A^2_f(u_2u_4) - y_{N-1}$ and color $u_2v_{N-1}$ with a color $z \in A^2_f(u_2v_{N-1}) - \{y, y_{N-2}, y_{N-1}\}$.
			
			\textbf{Case 2:} $(2,4,3,3)$. Similarly to the procedure described in Case 1, we can start coloring from $u_1u_3$ to $u_2u_4$ (proceed the main procedure in Case 1 through $u_2v_{N-1}, u_2u_4$  without the last sentence in Case 1).
			
			\textbf{Case 3:} $(3,3,4,2)$. Similarly to Case 2, we can start coloring from $u_2u_4$ to $u_1u_3$.
			
			\textbf{Case 4:} $(3,3,3,3)$. Similarly to Case 1, we can start coloring from $u_1u_3$ to $u_2u_4$ (proceed the main procedure in Case 1 through $u_2v_{N-1}, u_2u_4$  without the last sentence in Case 1).
		\end{proof}
		
		\begin{lemma}\label{no2vx}
			There is no $2$-vertex.
		\end{lemma}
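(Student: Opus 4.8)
The plan is to show that $G$ cannot have exactly one $2$-vertex; combined with $\delta(G)\ge 2$ and Lemma~\ref{atmostone}, this forces $G$ to be $3$-regular. So suppose for contradiction that $u$ is the unique $2$-vertex, with $N(u)=\{a,b\}$. Since $u$ is the only $2$-vertex, both $a$ and $b$ are $3$-vertices; write $N(a)=\{u,a_1,a_2\}$ and $N(b)=\{u,b_1,b_2\}$. By Lemma~\ref{lem:noK3} the edge $ab$ is absent, since otherwise $uab$ is a triangle. First I would form $G'=(G-u)+ab$: deleting $u$ drops $a$ and $b$ to degree two, and adding $ab$ restores them, so $G'$ is a simple cubic graph on $|V(G)|-1$ vertices. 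By the minimality of $G$, $G'$ admits a good coloring $f$.

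Next I would transfer $f$ to $G$ by keeping the colors on $G-u$ and leaving $ua,ub$ uncolored. The key observation is that the colored edges seen by $ua$ in $G$ — namely $aa_1,aa_2$, then $bb_1,bb_2$, together with the four edges incident to $a_1$ or $a_2$ — form a subset of the edges seen by $ab$ in $G'$, and symmetrically for $ub$. As the colors on $G-u$ agree in the two graphs, this yields $A^2(ab)\subseteq A^2(ua)\cap A^2(ub)$, where $A^2(ab)$ is computed in $G'$. Because $u$ is a $2$-vertex, neither $ua$ nor $ub$ may receive color $1$, and since they are adjacent they need two \emph{distinct} $2$-colors; by an SDR argument this is possible as soon as $|A^2(ua)\cup A^2(ub)|\ge 2$. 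Hence the only obstruction is the ``collapse'' case $A^2(ua)=A^2(ub)=\{c\}$ for a single $2$-color $c$, in which case each of $ua,ub$ sees all six remaining $2$-colors.

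To rule out the collapse case I would recolor locally. A convenient anchor is that whenever $f(ab)=1$, the four edges $aa_1,aa_2,bb_1,bb_2$ are forced to carry four distinct $2$-colors (they pairwise see one another through $ab$), which pins down most of the color pattern around $u$; when $f(ab)$ is a $2$-color one argues similarly after first attempting to recolor $ab$ with $1$. In either situation I would invoke girth at least $5$ (Lemmas~\ref{lem:noK3} and \ref{4cycle}) to ensure that $a_1,a_2,b_1,b_2$ and their neighborhoods overlap in a controlled way, and then perform a Kempe-type color swap on an edge incident to $a$ (or $b$) to liberate a second color of $A^2(ua)$ (or $A^2(ub)$); any such swap extends to a good coloring of $G$ and contradicts the choice of $G$.

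The hard part will be exactly this collapse case: the containment $A^2(ab)\subseteq A^2(ua)\cap A^2(ub)$ is clean and disposes of everything else in a single stroke, but when $ua$ and $ub$ each see precisely six $2$-colors one must show that the rigid pattern forced on the edges around $a$, $b$, and their second neighborhoods can always be perturbed. I expect this to require splitting on $f(ab)\in\{1,c\}$ and on how the three colors not forced on $aa_1,aa_2,bb_1,bb_2$ are distributed among the edges at $a_1,a_2$ versus those at $b_1,b_2$, in the same spirit as the case analyses of Lemmas~\ref{distance4} and \ref{atmostone}, with girth at least $5$ guaranteeing that the edges involved in each swap are far enough apart for the recoloring to remain good.
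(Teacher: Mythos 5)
Your opening moves are correct and genuinely different from the paper's: forming $G'=(G-u)+ab$ (legitimate since $ab\notin E(G)$ by Lemma \ref{lem:noK3} and $|V(G')|+|E(G')|<|V(G)|+|E(G)|$), transferring a good coloring $f$ of $G'$ to $G-u$, and observing that every colored edge seen by $ua$ or by $ub$ in $G$ is also seen by $ab$ in $G'$, so that $A^2(ab)\subseteq A^2(ua)\cap A^2(ub)$. However, there is a genuine gap, in two respects. First, your claim that the only obstruction to the SDR is $A^2(ua)=A^2(ub)=\{c\}$ is unjustified when $f(ab)=1$: in that case $A^2(ab)$ may be empty (the edge $ab$ sees up to twelve colored edges in $G'$), and then $A^2(ua)$ or $A^2(ub)$ may itself be empty, which is an obstruction of a different shape; nonemptiness of both sets only follows when $f(ab)$ is a 2-color. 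Second, and decisively, the collapse case is not a residual technicality --- under your approach it \emph{is} the lemma, and you do not prove it. Announcing a ``Kempe-type color swap'' is a plan, not an argument: you never identify which edge is recolored, verify that the recoloring preserves a good partial coloring, or show that the forced pattern (the at most eight colored edges seen by each of $ua$, $ub$ carrying six or seven distinct 2-colors) can always be broken. The natural move --- recolor an edge at $a$ or $b$ with 1, as done repeatedly in Lemmas \ref{distance4} and \ref{atmostone} --- can be blocked, e.g.\ when each of $a_1,a_2,b_1,b_2$ already meets a 1-colored edge; in that situation the colors seen by $ua$ and $ub$ are pinned by edges whose own sets of available colors depend on the third neighborhood of $u$, which your local analysis does not control. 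Completing this case would require work at least as heavy as Cases 2.2 and B of Lemma \ref{distance4}, and with less room to maneuver, since only two uncolored edges are present.

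The paper avoids the difficulty entirely with a different decomposition. Since $u$ is the unique 2-vertex, it lies on a cycle $C$ (otherwise $u$ is a cut-vertex, contradicting Lemma \ref{lem:cutedge}), and $|V(C)|\ge 5$ by Lemmas \ref{lem:noK3} and \ref{4cycle}. Deleting all of $V(C)$ and taking a good coloring of $G-V(C)$, every outside neighbour $v_i$ of a cycle vertex $u_i$ has degree 2 there, so all pendant edges $u_iv_i$ can be colored 1; then every cycle edge has at least three available 2-colors (at least five for the two edges at $u$), and the cycle is colored greedily going around it. Deleting a whole cycle is what creates the slack --- many uncolored edges absorbing the interference --- that your two-edge configuration lacks. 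If you want to keep your surgery, the collapse analysis must be carried out in full; as it stands, the proposal is incomplete.
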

		
		\begin{proof}
			By Lemma~\ref{atmostone}, we know that there is at most one $2$-vertex, say $u$. We claim that $u$ is contained in some cycle $C$ since otherwise $u$ is a cut-vertex, which is a contradiction to Lemma~\ref{lem:cutedge}. By Lemma~\ref{4cycle}, $C$ has at least five vertices. Let $C = u, u_1, u_2, \ldots, u_k, u$, where $k \ge 4$. Since $u$ is the only $2$-vertex, we can let the neighbours of $u_1, u_2, \ldots, u_k$ that is outside of $C$ be $v_1, v_2, \ldots, v_k$ respectively. 
			
			We delete $V(C)$ from $G$ to obtain a graph $G'$ and a good coloring $f$ on $G'$. Since $u$ is the only $2$-vertex in $G$, we know that $v_1, \ldots, v_k$ are $2$-vertices in $G'$. Thus, we can color the edges $u_1v_1, \ldots, u_kv_k$ by $1$. Furthermore, we know $|A^2_f(uu_1)|, |A^2_f(u_1u_2)|, \ldots, |A^2_f(u_{k-1}u_{k})|, |A^2_f(u_ku)|$ are at least $5, 3, \ldots, 3, 5$ respectively. Therefore, we can start coloring the edge $u_1u_2$ by any color in $A^2_f(u_1u_2)$, then color the edge $u_2u_3$ by a color in $A^2_f(u_2u_3)-f(u_1u_2)$, the edge $u_3u_4$ by a color in $A^2_f(u_3u_4)-f(u_2u_3)-f(u_1u_2)$, $\ldots$, color the edge $u_{k-1}u_k$ by a color in $A^2_f(u_{k-1}u_k)-f(u_{k-2}u_{k-1})-f(u_{k-3}u_{k-2})$; the process can be done since at most two colors in $A^2_f(u_iu_{i+1})$ are already used by the previously colored neighbours of $u_iu_{i+1}$ within distance two in $E(C)$ when we color each edge $u_iu_{i+1}$. Then we color $uu_k$ by a color in $A^2_f(uu_k)-f(u_{k-1}u_k)-f(u_{k-2}u_{k-1})-f(u_1u_2)$ and then we color $uu_1$ by a color in $A^2_f(uu_1)-f(uu_k)-f(u_{k-1}u_k)-f(u_1u_2)-f(u_2u_3)$.
			
			This extends the good coloring $f$ on $G'$ to a good coloring on $G$.
		\end{proof}
		
		

		\begin{lemma}\label{5cycle}
			There are no 5-cycles.
		\end{lemma}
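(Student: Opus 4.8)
The plan is to mimic the deletion-and-recolor strategy used for $4$-cycles in Lemma~\ref{4cycle}. First observe that, combining $\delta(G)\ge 2$ (Lemma~\ref{lem:delta2}), the absence of $2$-vertices (Lemma~\ref{no2vx}), and $\Delta(G)\le 3$, the graph $G$ is $3$-regular. So, given a $5$-cycle $C=x_1x_2x_3x_4x_5x_1$, each $x_i$ has a unique neighbor $y_i\notin V(C)$, and each $y_i$ is a $3$-vertex. I would first show the $y_i$ are pairwise distinct: any two vertices of $C$ lie at distance $1$ or $2$ on $C$, so a coincidence $y_i=y_j$ would produce either a triangle (if $x_ix_j\in E(G)$), contradicting Lemma~\ref{lem:noK3}, or a $4$-cycle (if $x_i,x_j$ are at distance $2$ on $C$), contradicting Lemma~\ref{4cycle}.

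Next, since $|V(G)|\ge 10$ by Lemma~\ref{nine}, the graph $G-V(C)$ is a nonempty proper subgraph and hence, by the minimality of $G$, admits a good coloring $f$. Each $y_i$ has exactly one neighbor on $C$, so it is a $2$-vertex in $G-V(C)$; since the color $1$ never appears at a $2$-vertex in a good coloring, $U_f(y_i)$ consists only of $2$-colors, and in fact the two non-cycle edges at $y_i$ receive two \emph{distinct} $2$-colors (they see each other). Impose $f$ on $G$ and color every $x_iy_i$ with $1$. Both endpoints of each $x_iy_i$ are $3$-vertices, the edges $\{x_iy_i\}$ form a matching, and none of them sees a $1$-colored edge of $f$, so this is a good partial coloring whose only uncolored edges are the five edges of $C$.

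It then remains to assign $2$-colors to the five cycle edges. Writing $B_i\subseteq\{2_1,\dots,2_7\}$ for the two colors that $f$ places on the non-cycle edges at $y_i$, the only $2$-colored edges seen by $e_i:=x_ix_{i+1}$ (indices mod $5$) are those contributing to $B_i\cup B_{i+1}$, so $A^2(e_i)=\{2_1,\dots,2_7\}\setminus(B_i\cup B_{i+1})$ has size at least $7-4=3$. Because the line graph of $C_5$ has diameter $2$, the edges $e_1,\dots,e_5$ pairwise see one another, so a valid coloring requires five \emph{distinct} $2$-colors, which I would obtain via an SDR of the lists $A^2(e_1),\dots,A^2(e_5)$. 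By Hall's theorem such an SDR exists unless some subfamily violates Hall's condition, and since each list has size at least $3$ in a universe of size $7$, the only obstructions are that four of the lists coincide with a common $3$-set, or that all five lie within a common $4$-set.

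The main obstacle is ruling out these degenerate configurations. Using $A^2(e_i)=\{2_1,\dots,2_7\}\setminus(B_i\cup B_{i+1})$, each obstruction forces the sets $B_i$ into a rigid, essentially alternating pattern (for instance, four identical lists force consecutive $B_i$'s to be disjoint $2$-sets partitioning a fixed $4$-set, so $B_1=B_3=B_5$ and $B_2=B_4$), which pins down the colors $f$ uses near the $y_i$. I would then break the pattern by a local modification: either recolor one $f$-edge at a suitable $y_i$ (possible whenever that edge retains an available $2$-color in $G-V(C)$), thereby altering the corresponding $B_i$; or, as in the proof of Lemma~\ref{4cycle}, work instead in $G-V(C)$ together with an added chord among the $y_i$'s and transfer the color of that chord. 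Either move changes the relevant $B_i$ so that Hall's condition is restored and the five cycle edges can be colored by SDR. Carefully verifying that such a move is always available across the finitely many symmetric subcases is the technical heart of the argument.
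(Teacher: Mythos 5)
Your setup is sound and matches the paper's: $G$ is $3$-regular, the external neighbors $y_1,\dots,y_5$ exist, are $3$-vertices, and are pairwise distinct; deleting $V(C)$, coloring every pendant edge $x_iy_i$ with $1$ (valid, since each $y_i$ is a $2$-vertex in $G-V(C)$ and the five pendant edges form a matching), and observing $|A^2(x_ix_{i+1})|\ge 3$ is all correct, as is your identification of the only two ways Hall's condition can fail (four lists equal to a common $3$-set, or all five lists inside a common $4$-set). But the resolution of those two obstructions is the entire content of the lemma, and your proposal does not carry it out — you explicitly defer it ("verifying that such a move is always available \ldots is the technical heart of the argument"). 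Neither of your two proposed fixes is established. Fix (a), recoloring an edge at some $y_i$ with a different available $2$-color, can simply be impossible: that edge may already see all six other $2$-colors in $G-V(C)$, and you prove nothing about when such a recoloring exists. Fix (b), passing to $G-V(C)+y_iy_{i+2}$, is not a local modification of your coloring $f$ at all: it requires a brand-new good coloring of a different graph, and in that graph the chord's endpoints are $3$-vertices, so edges at them (including the chord itself) may be colored $1$; consequently you can no longer color all five pendant edges with $1$, your lists of size $\ge 3$ are no longer guaranteed, and the whole SDR analysis collapses and must be rebuilt. That rebuilt analysis is precisely the paper's proof: it adds the chord $y_2y_5$ \emph{from the outset} and then spends Cases 2.1--2.3 (with many subcases) on exactly the situations you wave at — when $x_2y_2$ or $x_5y_5$ cannot receive color $1$, when the chord was colored $1$ versus a $2$-color, and when availability must be manufactured by recoloring an edge at some $y_i$ with $1$ and transferring its old color.

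There is also a boundary case your fallback cannot reach: fix (b) needs some chord $y_iy_{i+2}$ to be absent, but if all five such chords are present then the twelve vertices at hand induce the Petersen graph, hence $G$ \emph{is} the Petersen graph and no chord can be added. The paper handles this as a separate case by exhibiting a $(1,2^5)$-packing edge-coloring directly. (As it happens, your plain delete-and-SDR argument does succeed on the Petersen graph — every good coloring of the outer $C_5$ uses five distinct $2$-colors and the resulting lists admit an SDR — but your write-up neither notices that this case needs separate attention nor verifies it.) One further small imprecision: your claim that no pendant edge "sees a $1$-colored edge of $f$" can be false (an edge at a neighbor of $y_i$ may be colored $1$); the coloring is still legal because the $1$-color only requires a matching, i.e., non-adjacency rather than distance at least $3$, so you should argue it that way.
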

		
		\begin{proof}
			Suppose not and let $C=x_1, x_2, x_3, x_4, x_5, x_1$ be a 5-cycle. By Lemma~\ref{no2vx}, all vertices have degree three. Let $y_i$ be the neighbor of $x_i$ not in $C$. By Lemma~\ref{lem:noK3} and~\ref{4cycle}, there are no 3-cycles and 4-cycles in $G$, and thus we know $y_i\neq y_j$ for each $i\neq j$ and $y_iy_{i+1 (mod\ 5)} \notin E(G)$ for $i \in [5]$.
			
			\textbf{Case 1:} $y_iy_{i+2} \in E(G)$ for all $i,i+2 \pmod 5$.

			The resulting graph is the Petersen graph which admits a $(1,2^5)$-packing edge-coloring. Since the Petersen graph is cubic, the $(1,2^5)$-packing edge-coloring is also a good coloring.

			\textbf{Case 2:} One of the edge $y_iy_{i+2} \notin E(G)$ for some $i,i+2 \pmod 5$. Say $y_2y_5 \not \in E(G)$.

			Consider the graph $G'$ obtained by deleting the vertices $x_1,x_2,x_3,x_4,x_5$ and adding the edge $y_2y_5$. Let $f$ be a good coloring of $G'$. We now extend $f$ to a coloring of $G$ by considering the following cases.
			
			\textbf{Case 2.1:} $1 \not \in A(x_2y_2)$ but $1 \in A(x_5y_5)$.
			
			For $i\in \{1,3,4\}$, since $y_i$ has degree 2 in $G'$, $1\in A(x_iy_i)$. Then we color the edges $x_iy_i$ with 1 for $i\in \{1,3,4,5\}$. The edges $x_2y_2, x_2x_3, x_3x_4, x_4x_5, x_5x_1, x_1x_2$ see at most $5,3,4,4,4,3$ colors and so they have at least $2,4,3,3,3,4$ colors available, respectively. Note that since $y_2$ and $y_5$ are adjacent in $G'$, $|A^2(x_5x_1)\cup A^2(x_1x_2)|\ge 5$.
			
			\textbf{Case 2.1.1:} $A^2(x_2y_2)\cap A^2(x_4x_5) \neq \emptyset$.
			
			If $x_2y_2$ and $x_4x_5$ have an available color in common, then we use that color on both edges. Now, the edges $x_2x_3, x_3x_4, x_5x_1, x_1x_2$ have at least $3,2,2,3$ colors available, respectively with $|A^2(x_5x_1\cup A^2(x_1x_2)|\ge 4$. Thus we are done by SDR.
			
			\textbf{Case 2.1.2:} $A^2(x_2y_2)\cap A^2(x_4x_5) = \emptyset$.
			
			Since $x_2y_2$ and $x_4x_5$ have at least 2 and 3 colors available, respectively, and  $A^2(x_2y_2)\cap A^2(x_4x_5) = \emptyset$, then we must have $|A^2(x_2y_2)\cup A^2(x_4x_5)|\ge 5$. Then along with the counts of available colors established above, shows that if the union of the available colors for the edges $x_2y_2, x_2x_3, x_3x_4, x_4x_5, x_5x_1, x_1x_2$ contains at least 6 colors, then we are done by SDR. So let us assume that $x_2y_2$ and $x_4x_5$ have exactly 2 and 3 colors available. Without loss of generality, assume that $U(y_5)=\{2_1, 2_2\}$ and $U(y_4)=\{2_3,2_4\}$. Then we must have $A^2(x_4x_5)=\{2_5, 2_6, 2_7\}$ and $2_1, 2_2 \notin U(y_2)$.
			
			Since $f$ was a good coloring of $G'$ and $A^2(x_2y_2)\cap A^2(x_4x_5) = \emptyset$, then  the edge $y_2y_5$ must have received color $2_3$ or $2_4$ in $G'$ and hence, $2_3\in A^2(x_2y_2)$ or $2_4\in A^2(x_2y_2)$. Without loss of generality assume that $2_3\in A^2(x_2y_2)$. Also note that since $f$ was a good coloring of $G'$ and $U(y_5)=\{2_1, 2_2\}$, then $2_1,2_2 \not \in U(y_2)$. So if $2_1,2_2 \not \in U(y_1)$, then $2_1,2_2 \in A^2(x_1x_2)$ and thus, $|A^2(x_1x_2)\cup A^2(x_2y_2) \cup A^2(x_4x_5)| \ge 6$ and we are done by SDR. Thus we must have $2_1\in U(y_1)$ or $2_2\in U(y_1)$. Similarly, if $2_1,2_2 \not \in U(y_3)$, then $2_1,2_2 \in A^2(x_2x_3)$ and we are done by SDR. So we must also have $2_1\in U(y_3)$ or $2_2\in U(y_3)$. Furthermore, we must have $2_1\in U(y_1)\cap U(y_3)$ or $2_2\in U(y_1)\cap U(y_3)$, else one of the edges $x_1x_2, x_2x_3$ would have $2_1$ available and the other would have $2_2$ available and we would be done by SDR. Without loss of generality, assume that $2_1\in U(y_1)\cap U(y_3)$. Note that this implies the edge $x_5x_1$ has at least 4 colors available. Let $\alpha$ be the color used at $y_1$ that is not $2_1$. We proceed by considering cases on $\alpha$.
			
			First, if $\alpha \in \{2_3, 2_5, 2_6, 2_7\}$, then $2_4\in A^2(x_1x_5)$ and $2_2\in A^2(x_1x_2)$. Then the edges $x_1x_2$, $x_2y_2$, $x_2x_3$, $x_3x_4$, $x_4x_5$, $x_5x_1$ have at least 4, 2, 4, 3, 3, 4 colors available, respectively, with $|A^2(x_2y_2)\cup A^2(x_4x_5)|\ge 5$, $|A^2(x_1x_5)\cup A^2(x_1x_2)|\ge 5$, and $|A^2(x_1x_5)\cup A^2(x_1x_2)\cup A^2(x_2y_2)|\ge 6$, since $2_3 \in A^2(x_2y_2)$. Thus, we are done by SDR.
			
			Lastly, suppose that $\alpha \in \{2_2, 2_4\}$. Let $\alpha= 2_2 (2_4)$. Then we must have $2_1, 2_2 (2_4) \not \in A^2(x_2y_2)$, else we'd be done by SDR similar to the case above. This implies that $A^2(x_2y_2)=\{2_3, 2_4 (2_2)\}$. If $x_1y_1$ and $x_3x_4$ have an available color in common, then we uncolor $x_1y_1$, use that common color on both edges, and recolor $x_1x_2$ with 1. Now the edges $x_5x_1$, $x_2x_3$ have 4,3 colors available, respectively, and $|A^2(x_4x_5)\cup A^2(x_2y_2)|\ge 4$, so we are done by SDR. If $x_1y_1$ and $x_3x_4$ do not have an available color in common, then consider the number of available colors at $x_1y_1$. Let $u_1$ and $w_1$ be the vertices adjacent to $y_1$ not in $C$. If $|A^2(x_1y_1)|\ge 3$, then we uncolor $x_1y_1$ and color $x_1x_2$ with $1$. Since $|A^2(x_1y_1)\cup A^2(x_3x_4)|\ge 6$, we are done by SDR. Otherwise, we must have $|A^2(x_1y_1)| \le 2$, so we can recolor one of the edges $y_1u_1$ or $y_1w_1$ with 1, say $y_1u_1$, and color the edge $x_1y_1$ with color $f(y_1u_1)$. Note that $f(y_1u_1) \in \{2_1,2_2 (2_4)\}$, so that the edges $x_2y_2$, $x_2x_3$, $x_3x_4$, $x_4x_5$, $x_1x_5$ have at least $2,3,3,3,5(4)$ colors available, respectively, with $|A^2(x_2y_2)\cup A^2(x_4x_5)| \ge 5$, so we are done by SDR.

			
			
			
			\textbf{Case 2.2:} $1 \not \in A(x_2y_2)\cup A(x_5y_5)$.
			
			For $i\in \{1,3,4\}$, since $y_i$ has degree 2 in $G'$, $1\in A(x_iy_i)$. Then we color the edges $x_iy_i$ with 1 for $i\in \{3,4\}$ and leave the edge $x_1y_1$ uncolored for now. Without loss of generality, assume that $2_1\in U(y_5)$, $2_2\in U(y_2)$, and the edge $y_2y_5$ received color $2_3$ in $G$. Then we color the edges $x_2y_2, x_5y_5$ with $2_3$. The edges $x_1x_2, x_2x_3, x_4x_5, x_5x_1$ each see at most $4$ colors and so they each have at least $3$ colors available. Note that since $y_2$ and $y_5$ are adjacent in $G'$, $|A^2(x_5x_1) \cup A^2(x_1x_2)|\ge 4$. The edge $x_3x_4$ sees at most 5 colors, so there are at least 2 colors available on this edge. 
			
			\textbf{Case 2.2.1:} $A^2(x_2x_3)=A^2(x_4x_5)$ with $|A^2(x_2x_3)\cup A^2(x_4x_5)|=3$.
			
			Since $A^2(x_2x_3)=A^2(x_4x_5)$, $2_1\not \in A^2(x_4x_5)$, and $2_2\not \in A^2(x_2x_3)$, then we must have $2_1\in U(y_3)$ and $2_2\in U(y_4)$. Then since $A^2(x_2x_3)=A^2(x_4x_5)$ and $|A^2(x_2x_3)\cup A^2(x_4x_5)|=3$, without loss of generality assume that $2_4\in U(y_3)\cap U(y_4)$. Thus we must have $A^2(x_2x_3)=A^2(x_3x_4)=A^2(x_4x_5)=\{2_5, 2_6, 2_7\}$. 
			
			\textbf{Case 2.2.1.1:} If one of the colors $2_5, 2_6, 2_7$ is available on the edge $x_1y_1$, say $\beta$, then we use $\beta$ on the edges $x_1y_1, x_3x_4$. Note that since $|A^2(x_5x_1) \cup A^2(x_1x_2)|\ge 4$, then one of the edges $x_5x_1, x_1x_2$ has a color available that does not belong to $\{2_5, 2_6, 2_7\}$. Let this color be denoted $\alpha$ and without loss of generality, suppose $\alpha$ is available on the edge $x_5x_1$. Then we color the edge $x_1x_2$ with 1 and color the edge $x_5x_1$ with $\alpha$. Since $x_2x_3$ and $x_4x_5$ both still have at least 2 colors available, then we are done by SDR.  
			
			\textbf{Case 2.2.1.2:} Colors $2_5, 2_6, 2_7$ are not available on the edge $x_1y_1$ but we can color the edge $x_1y_1$ with $2_1$ or $2_2$. Then we can extend $f$ to a good coloring of $G$. By symmetry, suppose we can color $x_1y_1$ with $2_1$. Then the edge $x_5x_1$ still has 3 colors available and these three colors must be $2_5,2_6,2_7$; else we would color $x_1x_2$ with 1 and we are done by SDR. Let $u_1$ and $w_1$ be the vertices adjacent to $y_1$ not in $C$. Since $A^2(x_5x_1)=\{2_5,2_6,2_7\}$, then the edges $y_1u_1$ and $y_1w_1$ must be colored $2_2$ and $2_4$, so that $U(y_1)=\{2_1,2_2,2_4\}$. Now since the edge $x_1y_1$ also sees the colors $2_5, 2_6, 2_7$, this implies we can recolor one of the edges $y_1u_1, y_1w_1$ with 1, which makes a color $\beta \in \{2_2,2_4\}$ available at $x_5x_1$. Then we color $x_5x_1$ with $\beta$ and $x_1x_2$ with 1. Since each of the edges $x_2x_3, x_3x_4, x_4x_5$ still has 3 colors available, we are done by SDR.
			
			\textbf{Case 2.2.1.3:} Colors $2_1,2_2,2_5,2_6,2_7$ are not available on the edge $x_1y_1$. Again let $u_1$ and $w_1$ be the vertices adjacent to $y_1$ not in $C$. Since $x_1y_1$ sees $2_1,2_2,2_5,2_6,2_7$, then at least one of the edges $y_1u_1,y_1w_1$ is colored with $\gamma \in \{2_1,2_2,2_5,2_6,2_7\}$ and the endpoint do not see color $1$, say $y_1u_1$ is colored with $\gamma$ and $u_1$ do not see color 1. 
			
			If $\gamma \in \{2_5,2_6,2_7\}$, then we color the edge $y_1u_1$ with 1, color the edge $x_1y_1$ with $\gamma$, and we are done as in \textbf{Case~2.2.1.1} above. Thus we can assume $\gamma \in \{2_1,2_2\}$. By symmetry, suppose $\gamma = 2_1$. Then we color the edge $y_1u_1$ with 1 and color the edge $x_1y_1$ with $2_1$. Now the edge $x_5x_1$ has at least 4 colors available. We color the edge $x_1x_2$ with 1. Since the edges $x_5x_1, x_2x_3, x_3x_4, x_4x_5$ have at least 4,3,3,3 colors available, respectively, then we are done by SDR. 
			
			\textbf{Case 2.2.2:} $|A^2(x_2x_3)\cup A^2(x_4x_5)|\ge 4$.
			
			We begin this case by coloring $x_1y_1$ with 1. Note that since $|A^2(x_2x_3)\cup A^2(x_4x_5)|\ge 4$ and $|A^2(x_5x_1\cup A^2(x_1x_2)|\ge 4$, then we are done by SDR unless the union of available colors on the edges of $C$ is at most 4. So let us assume it is exactly 4. We consider cases based on whether $2_1$ and or $2_2$ are available colors on the edges of $C$.
			
			\textbf{Case 2.2.2.1:} Both $2_1$ and $2_2$ are available colors. Without loss of generality, suppose the union of available colors on the edges of $C$ are $\{2_1,2_2,2_6,2_7\}$. Then for $i\in \{1,3,4\}$, we must have $2_4,2_5\in U(y_i)$ and hence, the edge $x_3x_4$ must have all 4 colors $2_1,2_2,2_6,2_7$ available. We proceed by removing the color 1 from the edge $x_1y_1$.
			
			If $x_1y_1$ and $x_3x_4$ have a color $\alpha$ in common, then we use that color on both of these edges. If $\alpha = 2_1$, then we color the edge $x_1x_2$ with 1, and we are done by SDR since the edges $x_5x_1, x_2x_3, x_4x_5$ have at least 3,2,3 colors available, respectively. Similarly, if $\alpha = 2_2$, then we color the edge $x_5x_1$ with 1, and we are done by SDR. If $\alpha \in \{2_6,2_7\}$, then we color $x_1x_2$ with $2_1$, color $x_5x_1$ with 1, and since we have used two of the available colors, we must still have $|A^2(x_2x_3)\cup A^2(x_4x_5)|\ge 2$ so we are done by SDR.
			
			Now we consider the case where $x_1y_1$ and $x_3x_4$ have no color in common. Let $u_1$ and $w_1$ be the vertices adjacent to $y_1$ not in $C$. Since $2_4,2_5\in U(y_1)$, without loss of generality assume $y_1w_1$ is colored $2_4$ and $y_1u_1$ is colored with $2_5$. Since $x_1y_1$ and $x_3x_4$ have no color in common, then all the colors $2_1, 2_2, 2_6, 2_7$ are not available on $x_1y_1$, which means we can recolor the edge $y_1w_1$ with 1, color the edge $x_1y_1$ with $2_4$, and color the edge $x_1x_2$ with 1. Then the edges $x_2x_3, x_3x_4, x_4x_5, x_5x_1$ have at least $3,4,3,3$ colors available, respectively, so we are done by SDR.
			
			\textbf{Case 2.2.2.2:} Only one of $2_1$ or $2_2$ is an available color on the edges of $C$. By symmetry, suppose that $2_1$ is available and without loss of generality, assume the union of available colors on the edges of $C$ are $\{2_1,2_5.2_6,2_7\}$. Then we must have $2_2,2_4\in U(y_4)$ and $2_2,2_4\in U(y_1)$, which implies $A^2(x_4x_5)=\{2_5,2_6,2_7\}$ and $A^2(x_5x_1)=\{2_5,2_6,2_7\}$. We must also have $2_4\in U(y_3)$. Now since $A^2(x_4x_5)=\{2_5,2_6,2_7\}$ and $|A^2(x_2x_3)\cup A^2(x_4x_5)|\ge 4$, then we must have $2_1\in A^2(x_2x_3)$, so none of the edges incident to $y_3$ are colored with $2_1$. This implies that $2_1\in A^2(x_3x_4)$, so without loss of generality assume that $A^2(x_3x_4)=\{2_1,2_6,2_7\}$. We proceed again by removing the color 1 from the edge $x_1y_1$.
			
			Suppose $x_1y_1$ and $x_3x_4$ have an available color in common, say this color is $\alpha$. Then we use this color on both edges $x_1y_1$ and $x_3x_4$. If $\alpha \in \{2_6,2_7\}$, then we color the edge $x_2x_3$ with $2_1$ and the edge $x_1x_2$ with 1. The remaining edges $x_4x_5$ and $x_5x_1$ each have at least $2$ colors available, so we are done by SDR. If $\alpha = 2_1$, then we color the edge $x_1x_2$ with 1. The remaining edges $x_2x_3, x_4x_5, x_5x_1$ have at least $2,3,3$ colors available, respectively, so we again are done by SDR.
			
			Now suppose $x_1y_1$ and $x_3x_4$ have no available color in common. Let $u_1$ and $w_1$ be the vertices adjacent to $y_1$ not in $C$ and without loss of generality, assume $y_1u_1$ is colored $2_2$ and $y_1w_1$ is colored $2_4$. Since $x_1y_1$ and $x_3x_4$ have no available color in common, then $2_5,2_6,2_7 \in U(u_1)\cup U(w_1)$, which implies we can recolor one of the edges $y_1u_1$ or $y_1w_1$ with 1 and color the edge $x_1y_1$ with color $\alpha \in \{2_2,2_4\}$. Then we color the edge $x_5x_1$ with 1. Then the remaining edges $x_1x_2, x_2x_3, x_3x_4, x_4x_5$ have at least $4,3,3,3$ colors available, so we are done by SDR.
			
			\textbf{Case 2.2.2.3:} Neither $2_1$ nor $2_2$ are available colors on the edges of $C$. Then the union of available colors for the edges in $C$ must be $\{2_4,2_5,2_6,2_7\}$. Note that we must have $2_1,2_2\in U(y_1)$ so that $2_1, 2_2$ are not available on the edges $x_1x_2, x_5x_1$ and thus, $A^2(x_1x_2)=A^2(x_5x_1)=\{2_4, 2_5, 2_6, 2_7\}$. Additionally, we must have $2_1\in U(y_3)$ so that $2_1$ is not available on the edges $x_2x_3$ and $2_2\in U(y_4)$ so that $2_2$ is not available on the edge $x_4x_5$. We proceed by removing the color 1 from the edge $x_3y_3$. Let $u_3$ and $w_3$ be the vertices adjacent to $y_3$ not in $C$.
			
			If $x_5x_1$ and $x_3y_3$ have an available color in common, then we use that color on both edges and color $x_2x_3$ with 1. The remaining edges $x_1x_2, x_3x_4, x_4x_5$ have at least $3,1,2$ colors available, respectively, so we are done by SDR. 
			
			Otherwise, the colors $2_4, 2_5, 2_6, 2_7$ are not available on $x_3y_3$. Then we must have $U(u_3)\cup U(w_3)=\{2_1, 2_4, 2_5, 2_6, 2_7\}$, which implies we can recolor one of the edges $y_3u_3$ or $y_3w_3$ with 1. Suppose we can recolor $y_3u_3$ with 1 and let $\beta$ be the color assigned to the edge $y_3u_3$ under the coloring $f$. We proceed by coloring the edge $x_3y_3$ with $\beta$ and $x_2x_3$ with 1. Since the remaining edges $x_1x_2, x_3x_4, x_4x_5, x_5x_1$ have at least $3,2,2,4$ colors available, then we are done by SDR.
			
			\textbf{Case 2.3:} $1 \in A^2(y_2)\cap A^2(y_5)$.
			
			For $1\le i \le 5$, since $y_i$ has degree 2 in $G'$, $1\in A^2(y_i)$. Then we color the edges $x_iy_i$ with 1 for $1\le i \le 5$. Without loss of generality, assume that $2_1,2_2\in U(y_5)$ and $2_3,2_4\in U(y_2)$. The edges $x_1x_2, x_2x_3, x_3x_4, x_4x_5, x_5x_1$ each see at most $4$ colors and so they each have at least $3$ colors available. Note that since $y_2$ and $y_5$ are adjacent in $G'$, $|A^2(x_5x_1\cup A^2(x_1x_2)|\ge 5$. Hence we are done by SDR unless there are exactly 3 colors available on each of the edges $x_2x_3, x_3x_4, x_4x_5$, these colors are the same, and the same 3 colors are the only colors available on one of the edges $x_1x_2$ or $x_5x_1$. Suppose the same 3 colors are available on the edge $x_1x_2$. Since $A^2(x_2x_3)=A^2(x_3x_4)=A^2(x_4x_5)=A^2(x_1x_2)$, then we must have $2_1,2_2\in U(y_1)$, $2_1,2_2\in U(y_3)$, and $2_3,2_4\in U(y_4)$, which implies the available colors on the edges $x_1x_2, x_2x_3, x_3x_4, x_4x_5$ are $2_5,2_6,2_7$. We proceed by removing the color 1 from the edges $x_1y_1$ and $x_2y_2$ and coloring the edge $x_1x_2$ with 1. For $i=1,2$ let $u_i$ and $w_i$ be the vertices adjacent to $y_i$ not in $C$. 
			
			Suppose we can recolor one of the edges $y_1u_1$ or $y_1w_1$ with 1 and that we can recolor one of the edges $y_2u_2$ or $y_2w_2$ with 1, say we can recolor $y_1u_1$ and $y_2u_2$. Let $\alpha, \beta$ be the color $y_1u_1, y_2u_2$ received under $f$, respectively. Then we color the edge $x_1y_1$ with $\alpha$ and color the edge $y_2u_2$ with $\beta$. Then the remaining uncolored edges $x_2x_3, x_3x_4, x_4x_5, x_5x_1$ have at least 3,3,3,4 colors available respectively, so we are done by SDR.
			
			Hence, we must be unable to recolor both the edges $y_iu_i$ and $y_iw_i$ for at least one of $i=1$ or $i=2$. This implies that one of the edges $x_1y_1,x_2y_2$ has at least 3 colors available, while the other has at least 1 color available. We may assume that $x_2y_2$ has at least 3 colors available and the case when $x_1y_1$ has at least 3 colors available can be done by a similar argument (repeat the following argument word by word with the role of $x_1y_1$ and $x_2y_2$ switched). We proceed by considering cases on whether the colors $2_5,2_6,2_7$ are available on the edges $x_1y_1,x_2y_2$. By our assumption, $|A^2(x_2y_2) \cap \{2_5, 2_6, 2_7\}| \ge 1$.
			
			\textbf{Case 2.3.1:} The color $\alpha $ is available on the edge $x_1y_1$ and the color $\beta$ is available on the edge $x_2y_2$, where $\alpha \neq \beta$ and $\alpha, \beta \in \{2_5,2_6,2_7\}$. Then we proceed by coloring the edges $x_1y_1, x_3x_4$ with $\alpha$ and coloring the edges $x_2y_2,x_4x_5$ with $\beta$. The remaining uncolored edges $x_2x_3$ and $x_5x_1$ have at least 1,3 colors available, so we are done by SDR. 
			
			\textbf{Case 2.3.2:} Only one of the colors $2_5,2_6,2_7$ is available on both $x_1y_1$ and $x_2y_2$ and let this color be $\alpha$. If $\alpha$ is available on both edges $x_1y_1$ and $x_2y_2$, then consider the edge with fewer available colors, which is $x_1y_1$ by our previous assumption. Then we color $x_1y_1,x_3x_4$ with color $\alpha$. The edges $x_2y_2, x_2x_3, x_4x_5, x_5x_1$ have at least $2,2,2,4$ colors available and thus the only case we are not done is when $x_2y_2, x_2x_3, x_4x_5$ have the same two available colors, say $\gamma, \delta \in \{2_5, 2_6, 2_7\}$. Then we use $\gamma$ to color $x_2y_2$ and $x_4x_5$, color $x_2x_3$ with $\delta$, and there is at least one available color left to use at $x_1x_5$.

			\textbf{Case 2.3.3:} Only $x_2y_2$ has an available color in $\{2_5, 2_6, 2_7\}$, say $\alpha$. Since none of $2_5, 2_6, 2_7$ is available at $x_1x_2$ and $x_1x_2$ has at least one available color, we color $x_1x_2$ with a color in $A^2(x_1y_1)$, say $2_3$. Then we color $x_2y_2$ and $x_4x_5$ with $\alpha$. We find that now $|A^2(x_1x_5)| \ge 3$, $|A^2(x_2x_3)|, |A^2(x_3x_4)| \ge 2$, and thus we are done by SDR.		
		\end{proof}
		
		\begin{lemma}\label{6cycle}
			There are no $6$-cycles.
		\end{lemma}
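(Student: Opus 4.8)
The plan is to mimic the strategy of Lemmas~\ref{4cycle} and~\ref{5cycle}: assume a $6$-cycle exists in the minimal counterexample, pin down its local structure using the girth lemmas already proved, delete it, invoke minimality on the smaller graph, and then extend the coloring back. By Lemma~\ref{no2vx} the graph $G$ is $3$-regular, so I would start by letting $C = x_1x_2\cdots x_6x_1$ be a $6$-cycle and letting $y_i$ be the unique neighbour of $x_i$ off $C$. Because $G$ has no triangles (Lemma~\ref{lem:noK3}), no $4$-cycles (Lemma~\ref{4cycle}), and no $5$-cycles (Lemma~\ref{5cycle}), I would record that $C$ is chordless, that each $y_i$ is adjacent to exactly one vertex of $C$ (namely $x_i$), and that the $y_i$ are pairwise distinct. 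The same three lemmas rule out every edge $y_iy_j$ except the three ``antipodal'' ones $y_1y_4,\,y_2y_5,\,y_3y_6$: an edge $y_iy_{i+1}$ would create a $4$-cycle and $y_iy_{i+2}$ a $5$-cycle, whereas $y_iy_{i+3}$ survives only as a $6$-cycle. Thus the configuration is determined up to which of these three edges are present.

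Next comes the reduction. I would form $G'$ by deleting $V(C)$ and adding whichever of $y_1y_4,\,y_2y_5,\,y_3y_6$ are not already edges; since each $y_i$ lies in exactly one antipodal pair this keeps $G'$ subcubic, and as $G'$ has six fewer vertices it is strictly smaller, so by minimality it has a good coloring $f$. The point of inserting the antipodal edges is exactly as in the earlier lemmas: they record, inside $f$, the interaction between $y_i$ and $y_{i+3}$ transmitted through $C$, so that the color classes at antipodal $y$'s cannot coincide in a way that blocks the extension. I would then impose $f$ on $G$ (ignoring the colors on any added edges, which are not edges of $G$), leaving uncolored only the six spokes $x_iy_i$ and the six cycle edges.

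The extension is organised around how many spokes can take color $1$. Coloring $x_iy_i$ with $1$ is legitimate whenever $y_i$ carries no incident $1$-edge under $f$ (both $x_i,y_i$ are $3$-vertices of $G$, and the spokes are pairwise non-incident since the $y_i$ are distinct and off $C$); this can fail only at a $y_i$ that received an added antipodal edge and happens to carry a $1$ in $f$. Once a spoke is colored $1$, the only two-colored edges its cycle edge sees through $y_i$ are the two non-spoke edges at $y_i$, so after coloring the colorable spokes each cycle edge $x_ix_{i+1}$ sees at most the four two-colors appearing on the two non-spoke edges at $y_i$ and at $y_{i+1}$, giving $|A^2_f(x_ix_{i+1})|\ge 3$. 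Moreover the ``sees'' relation makes each cycle edge conflict with every other cycle edge except its antipode, so the six cycle edges induce a $K_{2,2,2}$, and a system of distinct representatives finishes unless their available sets coincide. I would therefore split into cases on the number of colorable spokes, paralleling Cases~1--5 of Lemma~\ref{4cycle} and the subcases of Lemma~\ref{5cycle}, finishing by SDR whenever the available sets are spread out.

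The hard part, as in the $4$- and $5$-cycle lemmas, will be the tight configurations in which every $A^2_f(x_ix_{i+1})$ collapses to the same three colors, together with the analogous situations having one or two uncolorable spokes. There I expect the resolution to rest on the two moves used repeatedly earlier: exploiting that $f$ is a good coloring of $G'$, where the relevant $y_i$ and $y_{i+3}$ are now adjacent, to contradict the forced coincidence of colors at antipodal $y$'s; and locally recoloring an edge incident to some $y_i$ with $1$ (after uncoloring its spoke) to free a second $2$-color at that spoke and thereby break the SDR obstruction. Checking that these moves apply across all placements of the antipodal edges and all spoke-colorability patterns is where essentially all of the casework sits, while the opening structural analysis and the generic count $|A^2_f|\ge 3$ are routine by comparison.
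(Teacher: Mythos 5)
Your opening structural analysis is fine, but the heart of your proposal is a gap, and the gap is largely self-inflicted. The paper's proof does \emph{not} add the antipodal edges $y_1y_4, y_2y_5, y_3y_6$: it takes $G' = G - V(C)$ with no additions. The payoff is decisive: every $y_i$ then has degree 2 in $G'$, so under any good coloring $f$ of $G'$ no edge incident to any $y_i$ is colored 1, and hence \emph{all six} spokes $x_iy_i$ can be colored 1 unconditionally (they form a matching, and all endpoints are 3-vertices of $G$ by Lemma~\ref{no2vx}). Your entire apparatus of colorable versus blocked spokes, and every case with fewer than six spokes receiving color 1, simply never arises. What remains is to color the six cycle edges, each with $|A^2_f(x_ix_{i+1})|\ge 3$, and here the paper exploits exactly the antipodal (``$K_{2,2,2}$'') structure you noticed, but in a way that closes the argument in a few lines: if some antipodal pair of cycle edges shares an available color, use it on both and finish through two short subcases (the second by a small disjoint-pairs selection); if all three antipodal pairs have disjoint available sets, then any four of the six cycle edges contain an antipodal pair whose union of available colors has size at least 6, so Hall's condition holds and SDR finishes. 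No information about interactions between antipodal $y_i$'s is ever needed.

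By contrast, in your reduction the added edges put the $y_i$'s at degree 3 in $G'$, so $f$ may place color 1 on edges at the $y_i$'s; blocked spokes must then receive 2-colors, and in the worst configurations you have up to twelve edges demanding 2-colors from a palette of seven, with a spoke seeing as many as six 2-colored edges, so $|A^2_f(x_iy_i)|$ can be as small as 1. That is precisely the regime of Lemmas~\ref{4cycle} and~\ref{5cycle}, where the paper needs pages of delicate recoloring; for the 6-cycle your version would carry at least as many cases (which antipodal edges exist, which carry color 1, which other edges at the $y_i$'s carry color 1), and your proposal explicitly defers all of it (``I expect the resolution to rest on\dots''). That deferred casework \emph{is} the proof; without it nothing is established, and it is not evident that the two recoloring moves you name suffice in every such configuration. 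There is also a small slip in the same part: a spoke $x_iy_i$ is blocked only if one of the two edges of $G$ incident to $y_i$ carries color 1; if the \emph{added} edge $y_iy_{i+3}$ is the one colored 1, that color vanishes when the added edge is dropped, and the spoke is free after all.
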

		
		\begin{proof}
			By Lemma~\ref{no2vx}, every vertex is a $3$-vertex. Suppose there is a $6$-cycle $C=x_1, x_2, x_3, x_4, x_5, x_6, x_1$ and the neighbours of each $x_i$ outside of the cycle be $y_i$, where $1 \le i \le 6$. By Lemma~\ref{lem:noK3},~\ref{4cycle},~\ref{5cycle}, there are no cycles of length at most 5 and thus all $y_i$'s are distinct. We delete $\{x_1, x_2, \ldots, x_6\}$ from $G$ to obtain a graph $G'$ and a good coloring $f$ on $G'$. We extend $f$ to $G$.
			
			Since $f'$ is a good coloring on $G'$, we can color each edge $x_iy_i$ by color $1$. We know each $|A^2(x_ix_{i+1 (mod\ 6)})| \ge 3$, where $1 \le i \le 6$.
			
			\textbf{Case 1:} $A^2(x_1x_2) \cap A^2(x_4x_5) \neq \emptyset$. Say $2_1 \in A^2(x_1x_2) \cap A^2(x_4x_5)$. We color both $x_1x_2, x_4x_5$ by $2_1$. 
			
			\textbf{Case 1.1:} $A^2(x_1x_6) \cap A^2(x_3x_4) - 2_1 \neq \emptyset$. Say $2_2 \in A^2(x_1x_6) \cap A^2(x_3x_4) - 2_1$. Then we color both $x_1x_6, x_3x_4$ by $2_2$. Since $|A^2(x_5x_6)| \ge 3$ and $|A^2(x_2x_3)| \ge 3$, we can color each of them by a color in $A^2(x_5x_6) - \{2_1, 2_2\}$ and $A^2(x_2x_3) - \{2_1, 2_2\}$ respectively.
			
			\textbf{Case 1.2:} $A^2(x_1x_6) \cap A^2(x_3x_4) - 2_1 = \emptyset$. By an argument similar to Case 1.1, we may assume that $A^2(x_2x_3) \cap A^2(x_5x_6) - 2_1 = \emptyset$ as well. Let $\{\alpha_1,\alpha_2\} \subseteq A^2(x_1x_6)$ and $\{\alpha_3, \alpha_4\} \subseteq A^2(x_3x_4)$ such that $\{\alpha_1, \alpha_2\} \cap \{\alpha_3, \alpha_4\} = \emptyset$; let $\{\beta_1,\beta_2\} \subseteq A^2(x_2x_3)$ and $\{\beta_3, \beta_4\} \subseteq A^2(x_5x_6)$ such that $\{\beta_1, \beta_2\} \cap \{\beta_3, \beta_4\} = \emptyset$. Among the four of $\{\alpha_1, \alpha_3\}, \{\alpha_1, \alpha_4\}, \{\alpha_2, \alpha_3\}, \{\alpha_2, \alpha_4\}$ there is at least one pair $\{\alpha_i, \alpha_j\}$ such that $ \{\beta_1, \beta_2\} - \{\alpha_i, \alpha_j\} \neq \emptyset$ and $ \{\beta_3, \beta_4\} - \{\alpha_i, \alpha_j\} \neq \emptyset$. We color $x_1x_6$ with $\alpha_i$ and $x_3x_4$ with $\alpha_j$. We color $x_2x_3$ with a color in $\{\beta_1, \beta_2\} - \{\alpha_i, \alpha_j\}$ and  $x_5x_6$ with a color in $\{\beta_3, \beta_4\} - \{\alpha_i, \alpha_j\}$.
			
			\textbf{Case 2:} $A^2(x_1x_2) \cap A^2(x_4x_5) = \emptyset$. By symmetry, we can assume $A^2(x_1x_2) \cap A^2(x_4x_5) = A^2(x_2x_3) \cap A^2(x_5x_6) = A^2(x_3x_4) \cap A^2(x_1x_6) = \emptyset$. Recall that each $|A^2(x_ix_{i+1 (mod\ 6)})| \ge 3$, where $1 \le i \le 6$. So the union of the $A^2$ set of any pair of edges at distance three from $\{x_1x_2, x_2x_3, tx_3x_4, x_4x_5, x_5x_6, x_6x_1\}$ has size at least $6$. Since there is at least one pair of edges at distance three in the cycle $C$ for any collection of four edges of $\{x_1x_2, x_2x_3, x_3x_4, x_4x_5, x_5x_6, x_6x_1\}$, we can extend the coloring $f$ to $G$ by SDR.	
		\end{proof}

		
		
		
		All previous lemmas imply the following lemma.  For the sake of simplicity, in subsequent sections we will refer back to this lemma as opposed to the specific lemmas beforehand.
		
		\begin{lemma}\label{lem:girth}
			$G$ is a 3-regular, simple graph on at least 10 vertices, with girth at least seven, and no edge cuts of size at most two.
		\end{lemma}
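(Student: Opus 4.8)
The plan is to assemble the statement directly from the structural lemmas already established in this section, since each of its assertions—simplicity, $3$-regularity, at least ten vertices, girth at least seven, and the absence of small edge cuts—is the conclusion of one or more of the preceding results applied to the minimal counterexample $G$. Thus the proof is a short bookkeeping argument rather than a new one: the genuine work has already been carried out in the individual lemmas.

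First I would record that $G$ is simple, which is precisely the content of the first (unlabeled) lemma of this section. Next, for $3$-regularity, I would combine three facts: $G$ is subcubic, so $\Delta(G)\le 3$; Lemma~\ref{lem:delta2} gives $\delta(G)\ge 2$; and Lemma~\ref{no2vx} shows that $G$ has no $2$-vertex. Together these force every vertex of $G$ to have degree exactly $3$. The bound $|V(G)|\ge 10$ is then immediate from Lemma~\ref{nine}.

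For the girth, I would invoke Lemmas~\ref{lem:noK3}, \ref{4cycle}, \ref{5cycle}, and \ref{6cycle} in turn, which respectively forbid cycles of length $3$, $4$, $5$, and $6$ in $G$. Since a $3$-regular graph is certainly not a forest and hence contains at least one cycle, the shortest such cycle has length at least $7$; that is, the girth of $G$ is at least seven. Finally, for the edge cuts, Lemma~\ref{lem:cutedge} already rules out any cut-edge, so $G$ has no edge cut of size one. For size-two cuts, Lemma~\ref{lem:cut} shows that any edge cut of size two must consist of two edges incident to a common $2$-vertex; but by Lemma~\ref{no2vx} there are no $2$-vertices in $G$, so no such cut exists, and $G$ has no edge cut of size at most two.

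I do not expect any real obstacle here, since every needed implication has been proved above; the only point in the assembly that requires a moment's care is verifying that $\delta(G)\ge 2$, $\Delta(G)\le 3$, and the nonexistence of $2$-vertices jointly pin down the degree of every vertex as exactly three (and, relatedly, that $3$-regularity guarantees the existence of a cycle so that the notion of girth is well defined). Everything else is a direct citation of the relevant lemma.
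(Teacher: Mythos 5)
Your proposal is correct and matches the paper exactly: the paper offers no separate argument, simply noting that ``all previous lemmas imply the following lemma,'' which is precisely the bookkeeping assembly you describe (simplicity from the first lemma, $3$-regularity from $\Delta(G)\le 3$ together with Lemmas~\ref{lem:delta2} and~\ref{no2vx}, the vertex count from Lemma~\ref{nine}, girth from Lemmas~\ref{lem:noK3}, \ref{4cycle}, \ref{5cycle}, \ref{6cycle}, and the edge-cut claims from Lemmas~\ref{lem:cutedge}, \ref{lem:cut}, and~\ref{no2vx}). Your added care about girth being well defined via the existence of a cycle in a $3$-regular graph is a sound, if implicit, detail the paper leaves unstated.
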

		

		\section{Final Proof Using Edge Cuts}\label{sec:cuts}
		
		In this section, we will prove Theorem \ref{LSS1}. We begin by proving two useful results concerning edge cuts in $G$. For disjoint $X, Y \subseteq V(G)$, let $[X,Y]$ denote the set of edges in $G$ that have exactly one endpoint in $X$ and the other in $Y$.  In particular, we will be using this notation when $X \cup Y = V(G)$ so that $[X,Y]$ denotes an edge cut in $G$.

		\begin{lemma}\label{lem:3cut}
			Let $[X,Y] = \{x_1y_1, x_2y_2, x_3y_3\}$ be a nontrivial edge cut in $G$ such that $x_1, x_2, x_3 \in X$, and $y_1, y_2, y_3 \in Y$.  Then $[X,Y]$ is an induced matching in $G$, and without loss of generality, every good  coloring of $G - [X,Y]$ results in $U(x_1) = U(x_2) = U(x_3)$ and $|U(y_1) \cup U(y_2) \cup U(y_3)| = 6$.
		\end{lemma}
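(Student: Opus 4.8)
The plan is to prove the two assertions—that $[X,Y]$ is an induced matching and that the stated colour profile is forced—essentially together, since in both cases the driving mechanism is whether a good colouring of $G-[X,Y]$ can be pushed across the cut.

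\textbf{Reductions that are immediate.} First I would show the three cut edges have pairwise distinct endpoints, so that they form a matching. If, say, $x_1=x_2$, then because $G$ is $3$-regular this common vertex $v$ has a third edge $e$; $e$ cannot be a cut edge (otherwise all three edges at $v$ form the cut, i.e.\ a trivial cut), so $e$ lies inside $X$. Shifting $v$ from $X$ to $Y$ makes $vy_1,vy_2$ internal while turning $e$ into a crossing edge, leaving $\{e,x_3y_3\}$ as an edge cut of size two; since the cut is nontrivial both resulting sides are nonempty, and this contradicts Lemma~\ref{lem:girth}. The same argument applies to the $y_i$'s. A diagonal edge $x_iy_j$ with $i\neq j$ is excluded outright, as it would be a fourth edge of the cut. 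What remains for the induced matching is to rule out same-side edges $x_ix_j$ and $y_iy_j$, which I defer to the colour analysis.

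\textbf{The extension criterion.} Since $G-[X,Y]=G[X]\sqcup G[Y]$ has fewer edges than $G$, it has a good colouring $f$; in it each $x_i$ and each $y_i$ is a $2$-vertex, so each of $U(x_i),U(y_i)$ is a $2$-element set of $2$-colours (the two incident edges see one another and neither may receive the $1$-colour). The key point, valid precisely when the cut is an induced matching, is that the three cut edges pairwise do \emph{not} see each other: a common witnessing edge would have to be one of the excluded same-side or diagonal edges. Hence $f$ extends to a good colouring of $G$ if and only if $A_f(x_iy_i)\neq\emptyset$ for every $i$, the three cut edges being colourable independently. As $G$ is a counterexample, in every good colouring $f$ of $G-[X,Y]$ some cut edge $x_iy_i$ must see all eight colours, i.e.\ $A_f(x_iy_i)=\emptyset$.

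\textbf{From saturation to the rigid profile.} The substance of the lemma is upgrading ``some cut edge is saturated in every $f$'' to the exact profile, and this is where essentially all the work lies. Here I would use the girth hypothesis of Lemma~\ref{lem:girth}: the radius-two ball around a cut edge contains no cycle (any would have length at most six), so the at most twelve edges that $x_iy_i$ sees are genuinely distinct, and the only $2$-colours it is forced to avoid beyond $U(x_i)\cup U(y_i)$ come from the four second-neighbour edges on each side. These second-neighbour edges can be locally altered—by a Kempe-type swap, or by re-using the $1$-colour at an adjacent $3$-vertex—without disturbing the rest of $f$. The argument then supposes the profile fails and produces, through such local recolourings, a colouring in which every $A_f(x_iy_i)$ is nonempty, contradicting the previous paragraph; the bookkeeping here mirrors the ``permute the $2$-colours'' arguments in the proofs of Lemma~\ref{lem:cutedge} and Lemma~\ref{lem:cut}, but now with three cut edges and two independently adjustable sides. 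The upshot is that the unique configuration resisting every such adjustment is the aligned/spread one: one side satisfies $U(x_1)=U(x_2)=U(x_3)$ and the other satisfies $|U(y_1)\cup U(y_2)\cup U(y_3)|=6$, and the ``without loss of generality'' merely records which side plays which role. Finally, this same analysis completes the induced matching: a same-side edge $x_ix_j$ forces $U(x_i)\neq U(x_j)$ (the two far edges at $x_i,x_j$ are at distance two through $x_ix_j$, hence differently coloured), contradicting the aligned side, while a same-side edge $y_iy_j$ forces $U(y_i)\cap U(y_j)\neq\emptyset$ (the shared colour of $y_iy_j$), contradicting the spread side; so no such edge can occur. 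The main obstacle throughout is controlling the recolourings in paragraph three, where the delicate case distinctions—isolating exactly the aligned/spread obstruction—must be carried out in full.
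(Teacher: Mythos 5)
Your proof pivots on the extension criterion in your second paragraph, and that criterion is false. When a good coloring $f$ of $G-[X,Y]$ is imposed on $G$, the edges incident to $x_i$ and the edges incident to $y_i$ become mutually visible: they are all adjacent to the cut edge $x_iy_i$. So the imposed partial coloring can already fail to be good, namely when $U_f(x_i)\cap U_f(y_i)\neq\emptyset$, and this failure is not detected or repaired by anything you do to the cut edges; it has nothing to do with whether $A_f(x_iy_i)$ is empty. Conversely, $A_f(x_iy_i)=\emptyset$ is never a genuine obstruction: since $x_i$ and $y_i$ have degree two in $G-[X,Y]$, every edge adjacent to a cut edge carries a 2-color, and the cut edges form a matching, so all three cut edges can always be colored $1$ (the $1$-class need only be a matching, so only \emph{adjacent} edges matter, not all ``seen'' edges, which is why $A_f$ is too conservative for the color $1$). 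Thus ``$f$ extends iff every $A_f(x_iy_i)\neq\emptyset$'' fails in both directions, and the consequence you draw from minimality --- that in every good coloring some cut edge sees all eight colors --- is not the right working statement. The correct consequence, which is what the paper's proof establishes and exploits, is that for every good coloring and every \emph{independent permutation of the 2-colors on the two components} $G[X]$ and $G[Y]$, some $i$ has $U(x_i)\cap U(y_i)\neq\emptyset$; the paper's concrete case analysis on $|U(x_1)\cup U(x_2)\cup U(x_3)|$ and the intersection pattern shows that the only configuration resisting all such permutations is the aligned/spread profile. Your third paragraph, which should contain this work, instead gestures at Kempe swaps and local recolorings aimed at filling the sets $A_f(x_iy_i)$ --- the wrong target --- and supplies no actual analysis.

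There is also a circularity in the architecture: you declare your criterion ``valid precisely when the cut is an induced matching,'' yet induced-ness is only deduced at the very end from the profile, which you derive using that criterion. The paper avoids this entirely: coloring the three cut edges with $1$ requires only that they form a matching (established first, as you do), the permutation argument goes through whether or not the cut is induced, and induced-ness then follows from the profile --- exactly by the argument in your final paragraph, which is correct and is the one part of your proposal that genuinely completes a step the paper leaves implicit.
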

		
		\begin{proof}
			First note that if $[X,Y]$ is an edge cut of $G$, then it must be a matching.  If not, then say $x_1 = x_2$ is adjacent to a third vertex $x_4$ in $X$, and we can replace $[X,Y]$ with $\{x_1x_4, x_3y_3\}$ to obtain an edge cut of size two contradicting Lemma \ref{lem:cutedge}.  
			
			By the minimality of $G$, $G - [X, Y]$ has a good coloring such that each $x_i$ and $y_i$ is only incident to edges that are colored with 2-colors as they have degree two in $G - [X,Y]$.  In particular, since $G - [X,Y]$ consists of two components, namely $G[X]$ and $G[Y]$, we can consider the colorings on each of them individually.  Our goal is to show that in every case other than the one described in the statement, there is a permutation of the 2-colors on $G[X]$ and $G[Y]$ such that $U(x_i) \cap U(y_i) = \emptyset$ for each $i$.  We can then impose these colors onto $G$ and color the edges of $[X,Y]$ with 1 to obtain a good coloring of $G$.
			
			Suppose without loss of generality that $2_1 \in U(x_1) \cap U(x_2) \cap U(x_3)$.  We may also assume that $U(x_1) = \{2_1, 2_2\}, U(x_2) \subseteq \{2_1, 2_2, 2_3\}$, $U(x_3) \subseteq \{2_1, \dots, 2_4\}$.  If $|U(y_1) \cup U(y_2) \cup U(y_3)| \le 5$, then we can permute the 2-colors on $G[Y]$ so that $U(y_3) = \{2_6, 2_7\}, U(y_2) \subseteq \{2_4, \dots, 2_7\}$, and $U(y_1) \subseteq \{2_3, \dots, 2_7\}$.  If $|U(y_1) \cup U(y_2) \cup U(y_3)| = 6$ and we do not have $U(x_1) = U(x_2) = U(x_3)$, then we may assume $2_2 \in U(x_1) \cup U(x_2)$ but $2_2 \notin U(x_3)$ so that $U(x_3) \subseteq \{2_1, 2_3, 2_4\}$.  We then permute the 2-colors of $G[Y]$ so that $U(y_3) = \{2_2, 2_7\}$, $U(y_2) = \{2_5, 2_6\}$, and $U(y_1) = \{2_3, 2_4\}$.  Therefore if $2_1 \in U(x_1) \cap U(x_2) \cap U(x_3)$, then we must have $U(x_1) = U(x_2) = U(x_3)$ and $|U(y_1) \cup U(y_2) \cup U(y_3)| = 6$.
			
			In all the rest, we may suppose by symmetry that $U(x_1) \cap U(x_2) \cap U(x_3) = \emptyset$ and $U(y_1) \cap U(y_2) \cap U(y_3) = \emptyset$.  In addition this implies $|U(x_1) \cup U(x_2) \cup U(x_3)| \ge 4$ and $|U(y_1) \cup U(y_2) \cup U(y_3)|  \ge 4$.  Suppose $|U(x_1) \cup U(x_2) \cup U(x_3)| = 4$.  We may assume without loss of generality that say $2_1 \in U(x_1) \cap U(x_2)$ so that $2_1 \notin U(x_3)$, and further $U(x_1) = \{2_1, 2_2\}$, $U(x_2) \subseteq \{2_1, 2_2, 2_3\}$, and $U(x_3) \subseteq \{2_2, 2_3, 2_4\}$.  If $|U(y_1) \cup U(y_2) \cup U(y_3)| \le 5$, then we can permute the 2-colors in $G[Y]$ so that $U(y_3) = \{2_6, 2_7\}$, $U(y_2) \subseteq \{2_4, \dots, 2_7\}$, and $U(y_1) \subseteq \{2_3, \dots, 2_7\}$.  If $|U(y_1) \cup U(y_2) \cup U(y_3)| =6$, then we can permute the 2-colors in $G[Y]$ so that $U(y_3) = \{2_1, 2_7\}$, $U(y_2) = \{2_5, 2_6\}$, and $U(y_1) = \{2_3, 2_4\}$.
			
			So we may assume $|U(x_1) \cup U(x_2) \cup U(x_3)|  \ge 5$ and $|U(y_1) \cup U(y_2) \cup U(y_3)|  \ge 5$.  Suppose $|U(x_1) \cup U(x_2) \cup U(x_3)|  = 5$.  So without loss of generality, we may assume $U(x_1) = \{2_1, 2_2\}, U(x_2) = \{2_1, 2_3\}$, and $U(x_3) = \{2_4, 2_5\}$.  If $|U(y_1) \cup U(y_2) \cup U(y_3)| = 6$, then we can permute the 2-colors on $G[Y]$ so that $U(y_3) = \{2_6, 2_7\}$, $U(y_2) = \{2_2, 2_5\}$, and $U(y_1) = \{2_3, 2_4\}$.  If $|U(y_1) \cup U(y_2) \cup U(y_3)|  = 5$, then we can permute the 2-colors on $G[Y]$ so that $U(y_1) \cup U(y_2) \cup U(y_3) = \{2_3, \dots, 2_7\}$ where $2_7$ is the 2-color that appears in exactly two $U(y_i)$.  If $2_7 \in U(y_1) \cap U(y_2)$, then we use $U(y_1) = \{2_5, 2_7\}$, $U(y_2) = \{2_4, 2_7\}$, and $U(y_3) = \{2_3, 2_6\}$. If $2_7 \in U(y_1) \cap U(y_3)$, then we use $U(y_1) = \{2_3, 2_7\}$, $U(y_2) = \{2_4, 2_5\}$, and $U(y_3) = \{2_6, 2_7\}$.  The case of $2_7 \in U(y_2) \cap U(y_3)$ is symmetric to the previous.
			
			Therefore we may assume $|U(x_1) \cup U(x_2) \cup U(x_3)| = |U(y_1) \cup U(y_2) \cup U(y_3)|  = 6$.  Here we permute the 2-colors on $G[X]$ and $G[Y]$ so that $U(x_1) = \{2_1, 2_2\}$, $U(x_2) = \{2_3, 2_4\}$, $U(x_3) = \{2_5, 2_6\}$, $U(y_3) = \{2_4, 2_7\}$, $U(y_2) = \{2_2, 2_5\}$, and $U(y_1) = \{2_3, 2_6\}$.  This completes all cases and proves the lemma.
		\end{proof}

		\begin{lemma}\label{lem:4cut}
			Let $[X,Y] = \{x_1y_1, x_2y_2, x_3y_3, x_4y_4\}$ be an edge cut of $G$  such that $x_1, x_2, x_3, x_4 \in X$ and $y_1, y_2, y_3, y_4 \in Y$, and furthermore, assume that $[X,Y]$ is a matching.  If $G[X]$ has a good coloring such that $|U(x_1) \cup U(x_2) \cup U(x_3) \cup U(x_4)| = 5$ and $U(x_1) \cap U(x_2) \cap U(x_3) \cap U(x_4) = \emptyset$, then we can obtain a good coloring of $G$.
		\end{lemma}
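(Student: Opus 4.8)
The plan is to build a good coloring of $G$ by gluing a good coloring of $G[Y]$ (obtained from minimality) onto the given coloring of $G[X]$ and coloring all four cut edges with the color $1$. First I would record the local structure. Since $[X,Y]$ is a matching and $G$ is $3$-regular (Lemma~\ref{lem:girth}), each $x_i$ is a $2$-vertex in $G[X]$ and each $y_i$ is a $2$-vertex in $G[Y]$, so in any good coloring of these two pieces the two edges at $x_i$ (resp. $y_i$) carry two distinct $2$-colors; write $U(x_i)$ and $U(y_i)$ for these pairs. By the minimality of $G$, the proper smaller subgraph $G[Y]$ has a good coloring, yielding the pairs $U(y_i)$, and I am free to apply any permutation of $\{2_1,\dots,2_7\}$ to it, since permuting $2$-colors preserves goodness.

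The key observation is that the two edges incident to $x_i$ and the two edges incident to $y_i$ all pairwise see one another through the common incident edge $x_iy_i$; hence, \emph{no matter how the cut edges are colored}, a good coloring forces $U(x_i)\cap U(y_i)=\emptyset$ for every $i$. Conversely, this disjointness is also sufficient: if I can permute the $2$-colors of the $G[Y]$-coloring so that $U(x_i)\cap U(y_i)=\emptyset$ for all $i$, then coloring every cut edge with $1$ gives a good coloring of $G$. Indeed, the only cross-cut interactions between $2$-colored edges occur between an edge at $x_i$ and an edge at $y_i$ of the \emph{same} index (edges at $x_i$ and $y_j$ with $i\ne j$ do not see each other), and disjointness removes these; meanwhile the color class of $1$ is a matching, because the cut edges form a matching and no cut edge is incident to a $1$-colored edge of $G[X]$ or $G[Y]$ (those edges avoid the $2$-vertices $x_i,y_i$). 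Both endpoints of each cut edge are $3$-vertices in $G$, so placing $1$ there is legitimate.

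Thus everything reduces to a purely combinatorial statement: writing $A_i:=U(x_i)$ and letting $B_i:=U(y_i)$ be the pairs from the chosen $G[Y]$-coloring, I must produce a permutation $\pi$ of $\{2_1,\dots,2_7\}$ with $\pi(B_i)\cap A_i=\emptyset$ for all $i$. The crucial simplification is that each constraint is local to a single color: for a $2$-color $c$, set $F(c)=\bigcup_{i\,:\,c\in B_i}A_i$, the colors that $\pi(c)$ must avoid. A valid $\pi$ is then exactly a system of distinct representatives for the lists $L(c)=\{2_1,\dots,2_7\}\setminus F(c)$, taken over the colors $c$ actually used by $G[Y]$. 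I would invoke Hall's theorem: since $\bigcup_{c\in S}L(c)=\{2_1,\dots,2_7\}\setminus\bigcap_{c\in S}F(c)$, the condition to check is $\bigl|\bigcap_{c\in S}F(c)\bigr|\le 7-|S|$ for every set $S$ of used colors.

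Verifying this Hall condition is the main work, and it is where both hypotheses enter. Writing $\deg c$ for the number of $B_i$ containing $c$, one has $\sum_c\deg c=\sum_i|B_i|=8$. Using $|A_1\cup\cdots\cup A_4|=5$, the small cases are quick: every $F(c)\subseteq\bigcup_iA_i$ has size at most $5$, which handles $|S|\le 2$ at once; for $|S|=3$ an intersection of size $5$ would require all three colors to have degree $\ge 3$, giving $\sum_c\deg c\ge 9>8$; and for $|S|=4$ an intersection of size $\ge 4$ forces all four used colors to have degree exactly $2$ and collapses $\bigcup_iA_i$ to size $4$, a contradiction. For large $S$ the hypothesis $A_1\cap A_2\cap A_3\cap A_4=\emptyset$ is what rescues the bound: once $|S|\ge 6$ there are enough degree-$1$ colors that every $A_i$ occurs as some $F(c)$, whence $\bigcap_{c\in S}F(c)\subseteq\bigcap_iA_i=\emptyset$. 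I expect the genuinely fiddly part to be the borderline case $|S|=6$ with exactly two degree-$2$ colors, where I would argue that too large an intersection again forces $\bigcup_iA_i$ below size $5$. Once Hall's condition is confirmed, the resulting SDR gives $\pi$, and coloring all four cut edges with $1$ completes a good coloring of $G$.
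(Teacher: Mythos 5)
Your proposal is correct in substance, and it takes a genuinely different route from the paper: the paper's proof of this lemma is a SAGE computation (only a link to the code appears in the appendix), whereas you give a human-readable argument that reduces the gluing problem to a system of distinct representatives and verifies Hall's condition by degree counting. Your reduction is sound: coloring the four cut edges with $1$ is legitimate since $G$ is $3$-regular (Lemma~\ref{lem:girth}) and the cut is a matching, and the condition $U(x_i)\cap U(y_i)=\emptyset$ for all $i$ is exactly the right one. One imprecision: your parenthetical claim that an edge at $x_i$ and an edge at $y_j$ with $i\neq j$ never see each other is literally false when $x_ix_j\in E(G[X])$, since then $x_ix_j$ sees the edges at $y_j$ through the cut edge $x_jy_j$. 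The conclusion survives because the color of $x_ix_j$ also lies in $U(x_j)$, so the disjointness condition at index $j$ already forbids that conflict; the clean statement is that every cross-cut seeing pair consists of an edge containing $x_k$ and an edge containing $y_k$ for one common index $k$.

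Two spots in the Hall verification need completing, and both do work out. You skip $|S|=5$, which is immediate: at least two colors of $S$ have degree $1$, and a degree-$1$ color $c\in B_i$ has $F(c)=A_i$ of size $2=7-5$. More importantly, your blanket claim for $|S|\ge 6$ (that every $A_i$ occurs as some $F(c)$) fails in a configuration you did not flag: $|S|=6$ with five degree-$1$ colors and one degree-$2$ color in $S$, plus one used color outside $S$; then only three indices are hit by degree-$1$ colors of $S$. However, your fallback idea covers this together with the case you did flag: if $\bigl|\bigcap_{c\in S}F(c)\bigr|\ge 2$, then every index $i$ hit by a degree-$1$ color of $S$ has $A_i$ equal to that intersection $T$; if at least three indices are hit, then $|\bigcup_i A_i|\le 4$, a contradiction; if only two are hit, counting forces those two $B_i$'s to be filled by four degree-$1$ colors, the remaining two colors of $S$ to have degree $2$ with $B_{j_1}=B_{j_2}=\{c,c'\}$ for the other two indices, and $T\subseteq F(c)=A_{j_1}\cup A_{j_2}$, again collapsing $|\bigcup_i A_i|$ to at most $4$. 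For $|S|=7$ the degree sequence is six $1$'s and one $2$, every $B_i$ contains a degree-$1$ color, and the intersection lands inside $\bigcap_i A_i=\emptyset$ --- the one place the empty-intersection hypothesis is used, exactly as you observe. With these details filled in, your argument yields a computer-free proof of the lemma, a real gain in self-containedness and insight into why the two hypotheses are what they are; what the paper's computation buys instead is avoiding this case analysis entirely, within the same framework it already uses for Lemma~\ref{nine}.
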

		
		\begin{proof}
			The proof of this lemma utilizes SAGE.  A link to the proof can be found in the appendix.
		\end{proof}

		\subsection*{Setup}
		
		The goal of the following is to define a partition of $V(G)$ into three sets, say $L, M$, and $R$, such that $[L, M\cup R]$ and $[L \cup M, R]$ are edge cuts in $G$, where at least one of these edge cuts has size at most four.  We will then consider good colorings of $G[L]$ and $G[R]$, extend the good coloring of $G[L]$ (or $G[R]$) to a good coloring of $G[L \cup M]$ (or $G[L \cup R]$) in such a way that we can apply either Lemma \ref{lem:3cut} or Lemma \ref{lem:4cut} to finish.
		
		
		Fix a vertex $x \in V(G)$, and let $N_G(x) = \{u,v,w\}$.  For $a \in \{u,v,w\}$, let $N_G(a) = \{x, a_1, a_2\}$; for $i \in \{1,2\}$, let $N_G(a_i) = \{a, a_{i1}, a_{i2}\}$; for $i, j \in \{1,2\}$, let $N_G(a_{ij}) = \{a_i, a_{ij1}, a_{ij2}\}$.
		Note that by Lemma \ref{lem:girth}, each of these vertices are distinct from one another, and the graph induced by these vertices in $G$ is a tree, except for possibly edges of the form $a_{ij}b_{k\ell}$, where $a, b \in \{u, v, w\}$, $i, j, k, \ell \in \{1, 2\}$ and $a \neq b$.

		By Petersen's Theorem, we know that $G$ has a perfect matching, say $P$.  Color every edge in $P$ with 1.  As a result, every vertex is incident to exactly one edge colored with 1.  Without loss of generality, suppose $xw, uu_1, vv_1,  u_2u_{21}, v_2v_{21}, w_1w_{11}, w_2w_{21}$ are colored with 1.  We now adjust this coloring by uncoloring $xw$ and $w_2w_{21}$, and then coloring $ww_2$ with 1. Note that $x$ and $w_{21}$ are the only vertices in $G$ that are not incident to an edge colored with 1.  We also color $uu_2, vv_2$, and $ww_1$ each with $2_1$. By Lemma \ref{lem:girth}, this is a good partial coloring of $G$, and Lemmas \ref{lem:uncolored} - \ref{lem:AF} will always assume this good partial coloring of $G$.  See Figure \ref{fig:tree-partialcoloring}.
		
		
		\begin{figure}[h]
			\centering
			\includegraphics{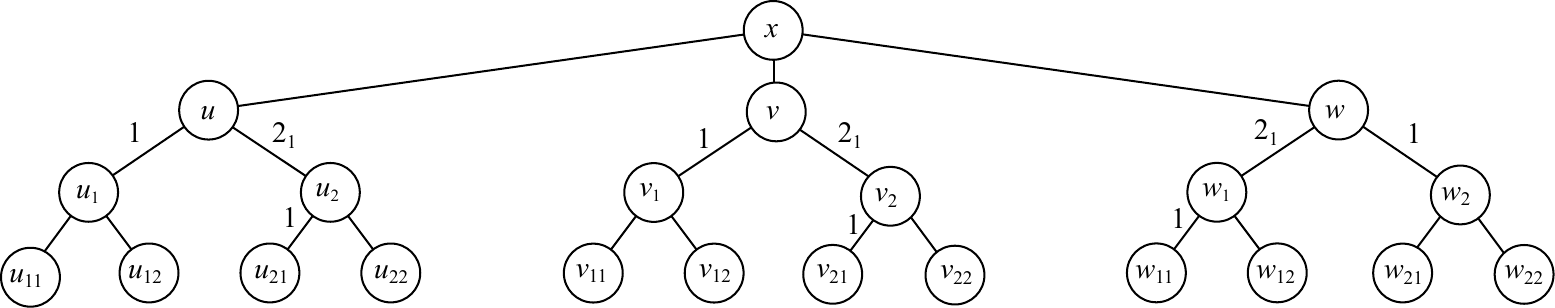}
			\caption{A good partial coloring of $G$.}
			\label{fig:tree-partialcoloring}
		\end{figure}
		
		\begin{lemma}\label{lem:uncolored}
			All but three uncolored edges in $G$ see exactly four edges colored with 1.  The remaining three uncolored edges each see exactly three edges colored with 1, and in particular, these three remaining uncolored edges are $w_2w_{22}$, the uncolored edge incident to $w_{211}$ other than $w_{21}w_{211}$, and the uncolored edge incident to $w_{212}$ other than $w_{21}w_{212}$.  
		\end{lemma}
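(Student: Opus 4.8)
The plan is to reduce everything to a short local count. Since $G$ is $3$-regular of girth at least seven (Lemma~\ref{lem:girth}), for any uncolored edge $e = ab$ the six vertices in $S_e := \{a,b\} \cup N_G(a) \cup N_G(b)$ are pairwise distinct, and one checks directly that an edge $f \neq e$ is seen by $e$ if and only if $f$ is incident to a vertex of $S_e$. Hence the number of $1$-colored edges seen by $e$ equals the number of distinct $1$-colored edges meeting $S_e$. I would compute this as $m - I$, where $m$ is the number of vertices of $S_e$ incident to a $1$-colored edge and $I$ is the number of $1$-colored edges with both endpoints in $S_e$: summing over the matched vertices of $S_e$ counts every internal $1$-edge twice and every boundary $1$-edge once, so $m = 2I + B$ and the number of distinct $1$-edges is $I + B = m - I$.

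Next I would evaluate $m$ and $I$. The $1$-colored edges are exactly the edges of the modified matching $P' = (P \setminus \{xw, w_2w_{21}\}) \cup \{ww_2\}$, so every vertex of $G$ lies on exactly one $1$-edge except $x$ and $w_{21}$; thus $m = 6 - |\{x, w_{21}\} \cap S_e|$. For $I$, the girth bound is decisive: no two vertices of $N_G(a) \cup N_G(b)$ are adjacent (such an edge would close a triangle or a $4$-cycle), and $ab = e$ itself is uncolored, so the only possible internal $1$-edges are the matching edge at $a$ (whose other end lies in $N_G(a) \subseteq S_e$ whenever $a$ is matched) and the matching edge at $b$, and these are distinct. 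Therefore $I$ equals the number of matched endpoints of $e$, and combining the two counts collapses everything to
\[
\#\{1\text{-edges seen by } e\} = 4 - Q ,
\]
where $Q$ is the number of vertices of $\{x, w_{21}\}$ that are adjacent to an endpoint of $e$ without being an endpoint of $e$.

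It then remains to find the uncolored edges with $Q \ge 1$. The vertex $x$ contributes nothing: its neighbors are $u, v, w$, and since $uu_1, ww_2$ are colored $1$ while $uu_2, vv_2, ww_1$ are colored $2_1$, the only uncolored edges meeting $\{u,v,w\}$ are $xu, xv, xw$, each of which has $x$ as an \emph{endpoint}. For $w_{21}$, whose neighbors are $w_2, w_{211}, w_{212}$, I would list the uncolored edges incident to these three vertices: at $w_2$ the edge $ww_2$ is colored $1$, leaving only $w_2w_{21}$ (where $w_{21}$ is an endpoint) and $w_2w_{22}$; at $w_{211}$ its matching $1$-edge leaves only $w_{21}w_{211}$ (where $w_{21}$ is an endpoint) and one further uncolored edge; similarly at $w_{212}$. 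Because girth at least seven forbids any edge inside $\{w_2, w_{211}, w_{212}\}$, for each of the three edges $w_2w_{22}$, the uncolored edge at $w_{211}$, and the uncolored edge at $w_{212}$, the vertex $w_{21}$ is a non-endpoint neighbor of exactly one endpoint; so $Q = 1$ and each sees exactly three $1$-edges, while every other uncolored edge has $Q = 0$ and sees exactly four.

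The main obstacle is the local bookkeeping forced by the two coloring modifications. One must confirm that recoloring $xw$ and $w_2w_{21}$ to uncolored and $ww_2$ to $1$, together with placing $2_1$ on $uu_2, vv_2, ww_1$, leaves exactly $x$ and $w_{21}$ unmatched and creates no extra internal $1$-edge; in particular the colored edges at the neighbors $u,v,w$ of $x$ are precisely what guarantee that $x$ only ever appears as an endpoint, so $x$ never produces an exception. The girth hypothesis does the remaining heavy lifting: it forces $|S_e| = 6$, rules out every would-be internal $1$-edge other than the two matching edges, and keeps the two unmatched vertices far enough apart that no single uncolored edge can have $Q = 2$.
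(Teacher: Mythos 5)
Your proof is correct, and at its core it follows the same strategy as the paper's: characterize the edges seen by $e=ab$ as exactly those meeting $S_e=\{a,b\}\cup N_G(a)\cup N_G(b)$, use the girth bound to forbid edges among $(N_G(a)\cup N_G(b))\setminus\{a,b\}$, use the fact that the $1$-colored edges form a matching covering every vertex except $x$ and $w_{21}$, and then inspect the neighborhoods of $x$ and $w_{21}$ to locate the exceptional edges. The differences are nonetheless worth recording. First, your inclusion--exclusion formulation (count $=m-I=4-Q$) does the counting once and uniformly, where the paper argues the ``exactly four'' case and the deficient cases separately. Second, and more substantively, your treatment of the vertex $x$ takes a genuinely different route: you observe that every edge at $u,v,w$ other than $xu,xv,xw$ is already colored, so no \emph{uncolored} edge can have $x$ as a non-endpoint neighbor of an endpoint, and hence $x$ never contributes to the deficiency $Q$. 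The paper instead reaches ``$x\notin A\cup B$'' via a distance argument: it shows $w_{21}\notin\{a,b\}\cup A$ when $x\in A$, and that $x\in A$ together with $w_{21}\in B$ would force $ab=ww_2$, which is colored. As written, that argument directly excludes only the simultaneous appearance of $x$ and $w_{21}$ near $ab$, so your version is the more airtight path to this step, and it is the step on which the whole characterization of the three exceptional edges rests. Two small blemishes you should repair: when listing the colored edges at $N_G(x)$ you name only $uu_1,ww_2$ as $1$-colored, omitting $vv_1$ (which is indeed colored $1$ in the setup, so your conclusion stands); and the sentence ``no two vertices of $N_G(a)\cup N_G(b)$ are adjacent'' is literally false since $a$ and $b$ themselves lie in that set --- what you intend, and correctly use afterwards, is that there are no edges inside $(N_G(a)\cup N_G(b))\setminus\{a,b\}$, so the only internal $1$-edges are the matching edges at $a$ and at $b$.
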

		
		\begin{proof}
			Let $ab \in E(G)$ such that $ab$ is uncolored, and suppose that $ab$ sees at most three edges colored with 1.  Consider $A = N_G(a) \setminus \{b\}$ and $B = N_G(b) \setminus\{a\}$.  By Lemma \ref{lem:girth}, these sets are disjoint.  Therefore,  if every vertex in $A$ and every vertex in $B$ is incident to an edge colored with 1, then $ab$ must see exactly four edges colored with 1.  Since the only vertices in $G$ that are not incident to an edge colored with 1 are $x$ and $w_{21}$, we must have $x$ or $w_{21}$ in $A \cup B$.
			
			Suppose $x \in A \cup B$.  If $x \in A$, then note that Lemma \ref{lem:girth} implies $w_{21} \notin \{a,b\} \cup A$.   If $w_{21} \in B$, then again by Lemma \ref{lem:girth}, $ab = ww_2$; however $ww_2$ is already colored with 1 and $ab$ is assumed to be uncolored.  A symmetric argument holds if $x \in B$.
			
			Therefore, $x \notin A \cup B$, which implies that we may only have $w_{21} \in A \cup B$ so that $ab$ sees exactly three edges colored with 1.  Without loss of generality, if $w_{21} \in A$, then $a \in \{w_2, w_{211}, w_{212}\}$.  Note that if $a = w_2$, then $ab = w_2w_{22}$.  Suppose $a = w_{211}$.  Recall that every vertex other than $x$ or $w_{21}$ is incident to an edge colored with 1.  By Lemma \ref{lem:girth}, $w_{211} \neq x$ so that $w_{211}$ is incident to an edge colored with 1. This implies $ab$ is the uncolored edge incident to $w_{211}$ other than $w_{21}w_{211}$, so $ab$ sees exactly three edges colored with 1 as claimed. A similar argument holds if $a = w_{212}$.  This proves the lemma.
		\end{proof}

		Define a sequence of edges $S_0 = w_{21}w_{211}, w_{21}w_{212}, w_2w_{22}, w_2w_{21}, xu, xv, xw$.  We will extend $S_0$ to a sequence $S$ such that all the following hold:
		\begin{enumerate}
			\item $S_0$ is at the end of $S$ (i.e.,  the last edges of $S$ will consists of $S_0$ in this exact order);
			\item for every edge $e \in S - S_0$, either $e$ sees four edges colored with 1 and sees at least two edges in $S$ that come later in the sequence of $S$ than $e$, or $e$ sees three edges colored with 1 and sees at least three edges in $S$ that come later in the sequence of $S$ than $e$; 
			\item  among all sequences satisfying (1) and (2), $S$ is longest.
		\end{enumerate}

		Note that $S - S_0$ is nonempty as the uncolored  edge incident to $w_{22}$ other than $w_2w_{22}$ is in $S - S_0$, as is the uncolored edge incident to  $w_{211}$ other than $w_{21}w_{211}$, and the uncolored edge incident to $w_{212}$ other than $w_{21}w_{212}$.  Therefore, by Lemma \ref{lem:uncolored} and defining $w_2w_{22}$ in $S_0$,  every uncolored edge that is not in $S$ sees exactly four edges colored with 1. 
		
		Furthermore, if $e$ is an uncolored edge of $G$ that is not in $S$, then $e$ cannot see two edges in $S$, as otherwise we could create a new sequence from $S$, by putting $e$ at the start of $S$.  This new sequence, say $S'$, would satisfy (1) and (2), but would be longer than $S$, contradicting (3) (i.e., the maximality of $S$).

		\begin{lemma}\label{lem:S=G}
			$S$ does not contain all of the uncolored edges of $G$.
		\end{lemma}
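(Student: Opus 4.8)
The plan is to argue by contradiction: suppose that $S$ contains every uncolored edge of $G$, and show that the given good partial coloring can then be completed to a good coloring of all of $G$, contradicting the choice of $G$. The first step is to color the edges of $S - S_0$ greedily, processing them in the forward order of $S$, so that when an edge $e$ is colored, every edge of $S$ occurring later than $e$ is still uncolored. By Lemma~\ref{lem:girth}, $G$ is $3$-regular with girth at least seven, so every edge of $G$ sees exactly twelve edges. Under our assumption, at the moment $e \in S - S_0$ is colored, the only uncolored edges that $e$ sees are those later in $S$. By property (2) of $S$, if $e$ sees four edges colored $1$ then it sees at least two later (hence uncolored) edges of $S$, and if $e$ sees three edges colored $1$ then it sees at least three later edges of $S$; in either case $e$ sees at most $12-4-2 = 12-3-3 = 6$ edges bearing a $2$-color, whence $|S^2(e)| \le 6$ and $A^2(e) \neq \emptyset$. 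Thus every edge of $S - S_0$ can be colored, leaving only the seven edges of $S_0$ uncolored.

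Next I would observe that all seven edges of $S_0$ are forced to receive a $2$-color. Every vertex other than $x$ and $w_{21}$ is incident to an edge colored $1$, so each edge of $S_0$ is adjacent to an already-$1$-colored edge: $xu, xv, xw$ are blocked by $uu_1, vv_1, ww_2$, both $w_2w_{21}$ and $w_2w_{22}$ are blocked by $ww_2$, and $w_{21}w_{211}, w_{21}w_{212}$ are blocked by the $1$-colored edges at $w_{211}$ and $w_{212}$. Hence the problem reduces to extending the coloring to these seven edges using $2$-colors only. Spelling out which of them see one another, the sees-relation on $S_0$ is a triangle on $\{xu, xv, xw\}$ together with a clique on $\{w_2w_{21}, w_2w_{22}, w_{21}w_{211}, w_{21}w_{212}\}$, joined by the edges from $xw$ to $w_2w_{21}$ and to $w_2w_{22}$ (each through the common $1$-colored edge $ww_2$). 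Counting as above, and using that the three $2_1$-colored edges $uu_2, vv_2, ww_1$ are all seen by $xw$ (so they contribute only the single color $2_1$ to $S^2(xw)$), one gets the list lower bounds $|A^2(xu)|, |A^2(xv)| \ge 3$, $|A^2(xw)| \ge 5$, $|A^2(w_2w_{21})| \ge 3$, and $|A^2(w_2w_{22})|, |A^2(w_{21}w_{211})|, |A^2(w_{21}w_{212})| \ge 2$. Moreover none of $uu_2, vv_2, ww_1$ is seen by $w_{21}w_{211}$ or $w_{21}w_{212}$, so $2_1 \in A^2(w_{21}w_{211}) \cap A^2(w_{21}w_{212})$, while $2_1 \in S^2(w_2w_{22})$ forces $2_1 \notin A^2(w_2w_{22})$.

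Finally I would verify Hall's condition for these seven lists relative to the sees-graph and finish by SDR, which colors $S_0$ and hence all of $G$. The triangle $\{xu, xv, xw\}$, together with the two clique-neighbours of $xw$, is harmless because $|A^2(xw)| \ge 5$. The one genuine obstacle I expect is the $4$-clique $\{w_2w_{21}, w_2w_{22}, w_{21}w_{211}, w_{21}w_{212}\}$, three of whose lists may have size only $2$. Here I would verify Hall directly: any two of the four lists have union of size at least $2$; any three have union at least $3$ (for the three size-$2$ lists this follows because the union contains $\{2_1\} \cup A^2(w_2w_{22})$ with $2_1 \notin A^2(w_2w_{22})$ and $|A^2(w_2w_{22})| = 2$); and the only way all four could fail to span at least $4$ colors is the degenerate configuration in which every clique-list is contained in a common $3$-element set. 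To rule this case out I would exploit that $w_{21}$ is incident to no $1$-colored edge: recoloring the $1$-colored edge at the $3$-vertex $w_{211}$ (or $w_{212}$) lets $w_{21}w_{211}$ be removed from the clique by taking color $1$, or at least enlarges its available $2$-set, restoring Hall's condition. Completing the SDR then yields a good coloring of $G$, the desired contradiction, so $S$ cannot contain all uncolored edges of $G$.
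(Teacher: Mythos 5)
Your opening phase matches the paper: greedily coloring $S - S_0$ in the order of $S$, using the count $12 - 4 - 2 = 12 - 3 - 3 = 6$, is exactly the paper's argument, and your list bounds for the seven edges of $S_0$, as well as your description of the sees-relation on $S_0$ (a triangle on $\{xu,xv,xw\}$, a $4$-clique on $\{w_2w_{21}, w_2w_{22}, w_{21}w_{211}, w_{21}w_{212}\}$, joined through $xw$), are correct. The gap is in the endgame, at both places where you resolve the tight cases of Hall's condition for the $4$-clique. First, the claim $2_1 \in A^2(w_{21}w_{211}) \cap A^2(w_{21}w_{212})$ does not follow from the fact that none of $uu_2, vv_2, ww_1$ is seen by these edges: the edges of $S - S_0$ were colored greedily with arbitrary available $2$-colors, so an edge seen by $w_{21}w_{211}$ (for instance the second edge at $w_{211}$, or an edge at $w_{2111}$ or $w_{2112}$) may perfectly well have received $2_1$ during that phase. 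Nothing in the construction reserves $2_1$ near $w_{21}$, so your argument that the three size-two lists cannot coincide collapses. Second, the fallback recoloring is unverified and does not obviously succeed: recoloring the $1$-colored edge at $w_{211}$ with a $2$-color and giving $w_{21}w_{211}$ color $1$ shrinks the lists of both $w_2w_{21}$ and $w_{21}w_{212}$ (both see the newly $2$-colored edge), and only one of $w_{21}w_{211}, w_{21}w_{212}$ can ever receive color $1$ since they share the vertex $w_{21}$; you are then left with a clique of three lists whose sizes may be as small as $2,2,1$ and whose union can still have size two, so Hall's condition can still fail.

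The paper escapes exactly these tight cases with a different, and crucially verifiable, recoloring. It colors $w_{21}w_{211}$ and $w_{21}w_{212}$ greedily first, and then observes that if $w_2w_{22}$ (respectively $w_2w_{21}$) has no available $2$-color, the at most seven (respectively six) $2$-colored edges it sees must carry pairwise distinct colors; since $ww_1$ is among them and carries $2_1$, it is the \emph{unique} $2_1$-colored edge seen, so one may uncolor $ww_1$, give the stuck edge $2_1$, and still finish greedily with $ww_1, xu, xv, xw$ at the end. This is the idea your proposal is missing: the deliberate placement of $2_1$ on $uu_2, vv_2, ww_1$ in the setup guarantees a removable $2_1$-colored edge next to any stuck edge among $w_2w_{21}, w_2w_{22}$, whereas your argument tries to guarantee that $2_1$ remains free next to $w_{21}$, which the greedy phase cannot ensure.
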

		
		\begin{proof}
			Suppose on the contrary that $S$ contains all of the uncolored edges of $G$.  We will color all of the uncolored edges of $G$ in the order they appear in $S$ with possibly changing the coloring on $ww_1$.  This will result in a good coloring of $G$, which is a contradiction to our assumption of $G$ as a counterexample.
			
			Let $e \in S$ and assume we have colored every edge in $S$ that appears before $e$ in such a way that we have a good partial coloring of $G$.    If $e$ sees four edges colored with 1 and at least two edges in $S$ that appear later in the sequence of $S$ than $e$ (i.e., they are not yet colored), then $e$ will see at most  $12 - 4 - 2 = 6$ different 2-colors, and so we can color $e$ with a 2-color and extend our good partial coloring of $G$.  If $e$ sees three edges colored with 1 and at least three edges in $S$ that appear later in the sequence of $S$ than $e$, then $e$ will see at most $12 - 3 - 3 = 6$ different 2-colors, and again we can color $e$ with a 2-color and extend our good partial coloring of $G$.
			
			Therefore every edge in $S - S_0$ can be greedily colored in this way. Consider $w_{21}w_{211}$ and $w_{21}w_{212}$.  Neither edge will be $uu_2, vv_2$ or $ww_2$ by Lemma \ref{lem:girth}.  We can greedily color $w_{21}w_{211}$ as it sees three edges colored with 1 and three edges in $S$ that appear later in the sequence of $S$.  Note that $w_{21}w_{212}$ now sees four edges colored with 1 and at most six other colored edges.  Therefore, we can further extend our good partial coloring to $w_{21}w_{212}$.  Call this good partial coloring of $G$, $\phi$.
			
			Note that the only edges in $G$ that are currently uncolored are $w_2w_{22}, w_2w_{21}, xu, xv, xw$, and furthermore, $|A^2_\phi(w_2w_{22})| \ge 0, |A^2_\phi(w_2w_{21})| \ge 1, |A^2_\phi(xu)| \ge 3, |A^2_\phi(xv)| \ge 3$, and $|A^2_\phi(xw)| \ge 5$.  
			
			Suppose that $|A^2_\phi(w_2w_{22})| \ge 1$ so that we can further extend our good partial coloring by coloring $w_2w_{22}$ with say $2_7$.  If we can also color $w_2w_{21}$, then we can easily color $xu, xv, xw$ in this order.  If we cannot color $w_2w_{21}$, then it must be that $A^2_\phi(w_2w_{21}) = \{2_7\}$.  However in $\phi$, $w_2w_{21}$ sees exactly six edges colored with 2-colors, which means that each of these six edges must be colored with $2_1, \dots, 2_6$.  In particular, $ww_1$ must be the only edge seen by $w_2w_{21}$ that is colored with $2_1$.  Therefore, we can uncolor $ww_1$, and instead color $w_2w_{22}$ with $2_7$ and color $w_2w_{21}$ with $2_1$ to obtain a new good partial coloring, say $\psi$.  Note that $|A^2_\psi(ww_1)| \ge 1$, $|A^2_\psi(xu)| \ge 3$, $|A^2_\psi(xv)| \ge 3$, and $|A^2_\psi(xw)| \ge 4$, and we can greedily color $ww_1, xu, xv, xw$ in this order.
			
			Therefore we must assume $|A^2_\phi(w_2w_{22})| = 0$.  Now as with $w_2w_{21}$ above, $w_2w_{22}$ sees exactly seven edges (in $\phi$) colored with 2-colors, and they must be colored with $2_1, \dots, 2_7$.  In particular, $ww_1$ is the only edge seen by $w_2w_{22}$ that is colored with $2_1$.  Therefore, we can uncolor $ww_1$, and instead color $w_2w_{22}$ with $2_1$, to obtain a new good partial coloring, call it $\rho$.  Note that $|A^2_\rho(w_2w_{21})| \ge 1$, $|A^2_\rho(ww_1)| \ge 2$, $|A^2_\rho(xu)| \ge 3$, $|A^2_\rho(xv)| \ge 3$, and $|A^2_\rho(xw)| \ge 5$, and we can greedily color $w_2w_{21}, ww_1, xu, xv, xw$ in this order.
		\end{proof}
		
		Let $H$ be the set of uncolored edges in $G$ that are not in $S$.  As noted above, Lemma \ref{lem:uncolored} implies every edge in $H$ sees four edges colored with 1.  Let $L \subseteq V(G)$ such that $L$ is incident to an edge in $H$.  By Lemma \ref{lem:S=G}, $H$ is nonempty, and so $L$ is nonempty.
		
		Note that $x, u, v, w$ are not in $L$, as each are only incident to edges that are colored or are in $S$.

		\begin{lemma}
			No edge in $S$ has both endpoints in $L$.
		\end{lemma}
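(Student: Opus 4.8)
The plan is to argue by contradiction: suppose some edge $f = ab \in S$ has both endpoints $a, b \in L$. The first step is to reduce to the case $f \in S - S_0$. Every edge of $S_0$ is incident to one of the vertices $x$, $w_2$, or $w_{21}$, and none of these lies in $L$: the vertex $x$ meets only $xu, xv, xw \in S_0$; the vertex $w_{21}$ meets only $w_2w_{21}, w_{21}w_{211}, w_{21}w_{212} \in S_0$; and $w_2$ meets only the $1$-colored edge $ww_2$ together with $w_2w_{21}, w_2w_{22} \in S_0$. Thus each of $x, w_2, w_{21}$ is incident solely to colored edges or to edges of $S$, so none of them is in $L$. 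Since every edge of $S_0$ has an endpoint in $\{x, w_2, w_{21}\}$, no edge of $S_0$ can have both endpoints in $L$, and we may assume $f \in S - S_0$.

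Next, since $a, b \in L$, I would fix edges $h_a, h_b \in H$ incident to $a$ and to $b$ respectively; each shares an endpoint with $f$, so each sees $f$. The crux is the observation that $h_a$ and $h_b$ jointly dominate the sees-neighborhood of $f$, in the sense that every edge seen by $f$ is seen by $h_a$ or by $h_b$. Indeed, an edge $s$ at distance at most two from $f = ab$ is reached from $f$ either by sharing an endpoint (so $s$ is incident to $a$ or to $b$, hence incident to $h_a$ or $h_b$) or through an intermediate edge incident to $a$ or to $b$ (so $s$ shares a vertex with an edge meeting $a$, resp. $b$, and therefore lies within distance two of $h_a$, resp. $h_b$). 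Either way $s$ is seen by $h_a$ or by $h_b$. This step is purely local and uses only that $f = ab$, so that the entire distance-two neighborhood of $f$ funnels through $a$ and $b$; no girth input is needed here.

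With the domination in hand the contradiction is immediate. Because $f \in S - S_0$, property~(2) guarantees that $f$ sees at least two edges of $S$ that come later in the sequence; fix such an edge $s \in S$, and note $s \neq f$. By the domination, $s$ is seen by $h_a$ or by $h_b$, say by $h_a$. Then $h_a$ sees the two distinct edges $f, s \in S$. But $h_a$ is an uncolored edge not in $S$, and we have already observed (from the maximality of $S$, since otherwise $h_a$ could be prepended to $S$) that such an edge sees at most one edge of $S$ — a contradiction. The main conceptual step, and the place I expect to need the most care, is the domination observation combined with the idea of feeding $f$'s own property~(2) into it: it is precisely because $f$ belongs to $S - S_0$ that it is guaranteed a later $S$-neighbor $s$, and the $S_0$ reduction in the first paragraph is exactly what makes property~(2) available for $f$.
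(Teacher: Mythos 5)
Your proof is correct and takes essentially the same route as the paper's: rule out $f \in S_0$ by noting every $S_0$-edge meets one of $x, w_2, w_{21}$, none of which lies in $L$; then use property (2) to produce a later $S$-edge seen by $f$, and conclude that this edge together with $f$ is seen by one of the $H$-edges $h_a, h_b$, contradicting the maximality of $S$. The only cosmetic difference is that you package the paper's case analysis (whether the later $S$-edge is incident to $a_1, a_2$ or to $b_1, b_2$) as a single clean domination claim, which if anything handles the cases more uniformly.
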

		
		\begin{proof}
			Let $ab \in S$ such that $a, b \in L$, and let $N_G(a) = \{b, a_1, a_2\}$ and $N_G(b) = \{a, b_1, b_2\}$.  Since $a, b \in L$, $a$ and $b$ must both be incident to an uncolored edge that is not in $S$.  Without loss of generality, suppose $aa_1, bb_1$ are such edges (i.e., they are in $H$).    Recall that every uncolored edge not in $S$ sees four edges colored with 1.
			
			By the definition of $S$, $ab$ is an uncolored edge of $G$, where either $ab \in S_0$, or $ab$  sees four edges colored 1 and at least two uncolored edges after it in $S$, or $ab$ sees three edges colored 1 and at least three uncolored edges after it in $S$.  
			
			Note that $ab \notin S_0$ as every edge in $S_0$ is incident to either $x, w_{21}$ or $w_2$.  Each of these vertices is surrounded by edges that are in $S$ or colored, and so are not incident to any uncolored edge not in $S$ (i.e., $H$).  
			
			Therefore $ab$ must see at least two uncolored edges after it in $S$.  If one such edge is incident to $a_1$ or $a_2$, then $aa_1$ will see at least two uncolored edges in $S$, this edge and $ab$.  So we can extend $S$ by putting $aa_1$ at the start, contradicting the maximality of $S$.  However, a symmetric argument applies to $bb_1$ if an edge of $S$ is incident to either $b_1$ or $b_2$.
		\end{proof}

		Let $F = [L, V(G)\setminus L]$.  A vital component in our subsequent proofs will involve understanding $F$.  In particular, we will consider case by case the possibilities for $F$.  We know by Lemma \ref{lem:3cut} that $|F| \ge 3$ and if $|F| = 3$, then $F$ must be an induced matching.  In addition, the following two lemmas will further restrict the cases we need to consider.

		\begin{lemma}\label{lem:F}
			$F \subseteq \{uu_1, uu_2, vv_1, vv_2, ww_1, u_2u_{21}, u_2u_{22}, v_2v_{21}, v_2v_{22}, w_1w_{11}, w_1w_{12}\}$.
		\end{lemma}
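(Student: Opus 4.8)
The linchpin of the whole argument is the following reduction. A vertex lies outside $L$ exactly when all three of its incident edges are colored or belong to $S$, because $q \notin L$ means $q$ is incident to no edge of $H$. Hence, if $pq \in F$ with $p \in L$ and $q \notin L$, then $pq$ is one of the edges at $q$, so \emph{every edge of $F$ is either colored or in $S$}. Since the only $2$-colored edges of our partial coloring are $uu_2, vv_2, ww_1$, and all other colored edges are the near-perfect matching inherited from $P$, this already confines $F$ to matching edges, the three $2$-edges, and edges of $S$. The plan is to go through these candidates and eliminate every one except the eleven in the statement.

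First I would dispose of the candidates whose \emph{both} endpoints are forced outside $L$. By the partial coloring, each of $x,u,v,w$ has all its incident edges colored or in $S_0$; the same is true of $w_2, w_{21}, w_{22}, w_{211}, w_{212}$, using that the lone uncolored edge at each of $w_{22}, w_{211}, w_{212}$ lies in $S-S_0$ (recorded just after the definition of $S$). Thus none of $xu, xv, xw, ww_2$, nor any edge of $S_0$, belongs to $F$. The contradiction tool for the remaining candidates is the maximality of $S$: every edge of $H$ sees at most one edge of $S$, since an $H$-edge seeing two edges of $S$ could be prepended to $S$. So for a candidate $pq \in F$ with safe endpoint $q$ and $p \in L$, I fix an $H$-edge $ph$ at $p$ and try to exhibit \emph{two} distinct edges of $S$ that $ph$ sees (i.e., at line-graph distance at most two), which is forbidden.

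The heart of the matter is that this forced pair of seen $S$-edges appears for \emph{exactly} the non-listed candidates. Concretely, if $pq$ is a matching edge that is not one of the eleven, then $q$'s two edges other than $pq$ are non-matching and uncolored, hence both in $S$, and $ph$ sees both of them through $pq$; if $pq \in S$ is not among the eleven (for instance $w_2w_{22}$'s partner $w_{22}w_{222}$, the deeper edges of the $w$-subtree hanging off $w_{22}, w_{211}, w_{212}$, or $u_1u_{11}, u_1u_{12}, v_1v_{11}, v_1v_{12}$), then the safe endpoint already carries a second edge of $S$ adjacent to $pq$, and $ph$ sees $pq$ together with that edge. In each instance the girth-$\ge 7$ tree structure around $x$ guarantees these two $S$-edges are genuinely distinct and within distance two of $ph$, contradicting maximality. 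By contrast, for the eleven listed edges the safe endpoint supplies at most one edge of $S$ to the view of an $H$-edge across it — e.g.\ at $u$ the edges other than $uu_1$ (resp.\ $uu_2$) are $xu\in S$ and a colored edge, and at $u_2$ the edges other than $u_2u_{21}$ (resp.\ $u_2u_{22}$) are $uu_2$ and $u_2u_{21}$, neither forcing a second $S$-edge — so no contradiction arises and these survive as legitimate members of $F$.

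I expect the main obstacle to be precisely this bookkeeping at the ``second layer'' and deeper: organizing the candidates so that, for each non-listed colored-or-$S$ edge $pq$ with $q\notin L$, one can always name the specific pair of $S$-edges an $H$-edge at $p$ would see, while verifying that the eleven listed edges never admit such a pair. The most delicate cases are the interior edges of the $w$-branch beyond $w_{22}, w_{211}, w_{212}$, where the adjacent pair of $S$-edges (such as $w_2w_{22}$ together with the $S-S_0$ edge at $w_{22}$) is what pushes their far endpoints out of $L$, and the matching edges $u_2u_{21}, v_2v_{21}, w_1w_{11}$, which must be retained rather than excluded. Isolating, candidate by candidate, the witnessing pair of $S$-edges and then invoking maximality uniformly should yield $F \subseteq \{uu_1, uu_2, vv_1, vv_2, ww_1, u_2u_{21}, u_2u_{22}, v_2v_{21}, v_2v_{22}, w_1w_{11}, w_1w_{12}\}$.
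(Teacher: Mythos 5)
Your proposal is correct and takes essentially the same route as the paper's proof: both rest on the observation that every edge of $F$ must be colored or lie in $S$ (since its endpoint outside $L$ is incident to no $H$-edge), combined with the maximality of $S$ forbidding any $H$-edge from seeing two edges of $S$, and the structure of the partial coloring (the $1$-edges form a near-perfect matching and the only $2$-colored edges are $uu_2, vv_2, ww_1$). The paper organizes this as a forward case analysis on whether the $F$-edge is colored $2_1$, colored $1$, or in $S$, directly pinning down its endpoint outside $L$, whereas you eliminate the non-listed candidates contrapositively; the content is the same.
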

		
		\begin{proof}
			Let $ab \in F$, where $a \in L$ and $b \in V(G)\setminus L$.  Let $N_G(a) = \{b, a_1, a_2\}$ and $N_G(b) = \{a, b_1, b_2\}$.  Since $b \notin L$, we know $ab \notin H$, and hence either $ab \in S$ or $ab$ is already colored.  
			
			Suppose $ab$ is already colored.  If $ab$ is colored $2_1$, then $ab \in \{uu_2, vv_2, ww_1\}$.  
			
			If $ab$ is colored 1, then since $a \in L$, $a$ must be incident to an uncolored edge that is not in $S$ (i.e., is in $H$).  Suppose $aa_1$ is such an edge.  Since $aa_1$ is not in $S$ and is uncolored, it must see exactly four edges colored with 1.  Therefore we cannot have both $bb_1, bb_2 \in S$, as otherwise $aa_1$ would see two edges in $S$ and could then be added to the start of $S$, contradicting the maximality of $S$.  So without loss of generality, $bb_1$ is either uncolored and not in $S$, or is already colored.  If $bb_1$ is already colored, then since we are assuming $ab$ is already colored and it is with 1, then $bb_1$ must be colored with $2_1$ so that $b \in \{u, v, w, u_2, v_2, w_1\}$.  However, $a \neq w_2$ as $w_2$ is incident to only colored edges and edges in $S$ so that $w_2 \notin L$.  Thus in this case, $ab \in \{uu_1, u_2u_{21}, vv_1, v_2v_{21}, w_1w_{11}\}$.
			
			Lastly suppose $ab$ is in $S$.  As above, since $a \in L$, we may assume $aa_1$ is an uncolored edge not in $S$.  Thus, $ab$ sees four edges colored with 1.  If either $bb_1$ or $bb_2$ is in $S$, then again with contradict the maximality of $S$ as $aa_1$ will see this edge as well as $ab$.  Therefore both $bb_1$ and $bb_2$ must be already colored with say 1 and $2_1$, respectively.  This implies $b \in \{u, u_2, v, v_2, w, w_1\}$.  Note that $b \notin \{u, v, w\}$ as it would imply $a = x$, however $x \notin L$ as $x$ is only incident to edges that are in $S$.  So $b \in \{u_2, v_2, w_1\}$ and as $ab$ is uncolored (it is in $S$), we must have $ab \in \{u_2u_{22}, v_2v_{22}, w_1w_{12}\}$.
		\end{proof}

		\begin{lemma}\label{lem:AF}
			Let $A = \{u_1, u_2, v_1, v_2, w_1\}$.   Every vertex in $A \cup L$ is incident to at most one edge from $F$.  
		\end{lemma}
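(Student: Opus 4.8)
The plan is to first reduce the statement to three specific vertices and then run a short case analysis driven by the maximality of the sequence $S$. The one tool I will lean on throughout is the consequence of maximality already recorded just before Lemma~\ref{lem:S=G}: every uncolored edge not in $S$ (equivalently, every edge of $H$) sees at most one edge of $S$; call this property $(\star)$.

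For the reduction, note that by Lemma~\ref{lem:F} every edge of $F$ lies in the set
$$T = \{uu_1, uu_2, vv_1, vv_2, ww_1, u_2u_{21}, u_2u_{22}, v_2v_{21}, v_2v_{22}, w_1w_{11}, w_1w_{12}\}.$$
A direct inspection of $T$ shows that the only vertices meeting two or more edges of $T$ are $u, u_2, v, v_2$, and $w_1$. Since a vertex incident to two edges of $F$ is incident to two edges of $T$, and since $u, v \notin A \cup L$ (as $u, v \notin L$ and neither is in $A$), it suffices to prove that each of $u_2, v_2, w_1$ meets at most one edge of $F$. These three vertices are interchangeable: in each case the vertex $p$ has a ``parent'' edge colored $2_1$ (one of $uu_2, vv_2, ww_1$), one ``child'' edge $pc_1$ colored $1$ (one of $u_2u_{21}, v_2v_{21}, w_1w_{11}$), and one uncolored ``child'' edge $pc_2$ (one of $u_2u_{22}, v_2v_{22}, w_1w_{12}$); moreover in each case the parent $\pi \in \{u,v,w\}$ satisfies $\pi \notin L$, the edge $x\pi$ lies in $S_0$, and $\pi$'s remaining edge is colored $1$. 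I will argue for a representative $p$ (say $p = u_2$, $\pi = u$, $c_1 = u_{21}$, $c_2 = u_{22}$).

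In the first case, suppose $p \in L$. Then its only uncolored edge $pc_2$ must lie in $H$, so $c_2 \in L$ and $pc_2 \notin F$, while $p\pi \in F$ since $\pi \notin L$; this already gives one edge of $F$. To rule out $pc_1 \in F$ I would show $c_1 \in L$: applying $(\star)$ to $pc_2 \in H$ and observing that $pc_2$ already sees $x\pi \in S_0$ at distance two, the two edges $c_1c_{11}, c_1c_{12}$, which $pc_2$ sees through $pc_1$, cannot be in $S$; being uncolored they lie in $H$, so $c_1 \in L$ and $pc_1 \notin F$. Hence $p$ meets exactly one edge of $F$.

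The second case, $p \notin L$, is the crux. Here $pc_2$ is uncolored and not in $H$, so $pc_2 \in S$, and since $pc_2 \notin S_0$ we have $pc_2 \in S - S_0$; also $p\pi \notin F$, so two edges of $F$ at $p$ would force $c_1, c_2 \in L$, which I will contradict. Using $(\star)$ on the $H$-edge at $c_2$ together with the fact that it sees $pc_2 \in S$, I can show both child-edges of $c_2$ lie in $H$ (if one were in $S$, that $H$-edge would see two edges of $S$); the same reasoning through $pc_1$ shows both child-edges of $c_1$ lie in $H$; and since both child-edges of $c_2$ are in $H$, $(\star)$ further forbids any edge at $c_{21}$ or $c_{22}$ from lying in $S$. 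As $pc_2$ sees, through $\pi$, only $x\pi \in S$ and colored edges, it follows that $pc_2$ sees exactly one edge of $S$, namely $x\pi$. But $pc_2 \in S - S_0$ sees four edges colored $1$ (Lemma~\ref{lem:uncolored}), so the defining property of $S$ forces it to see at least two later edges of $S$ — a contradiction. Thus at most one of $c_1, c_2$ lies in $L$, so $p$ meets at most one edge of $F$, completing the reduction and the lemma. The routine part is the edge-by-edge bookkeeping of which edges seen by $pc_2$ are colored, in $S_0$, or forced into $H$; the genuinely delicate step is this second case, where the tension between ``$pc_2 \in S-S_0$ must see two later $S$-edges'' and ``$(\star)$ forbids the $H$-edges at $c_1,c_2$ from seeing a second $S$-edge'' is exactly what collapses the possibility of two edges of $F$ at $p$.
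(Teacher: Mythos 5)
Your proof follows essentially the same strategy as the paper's: both hinge on playing the defining property (2) of an edge of $S - S_0$ (it must see at least two later edges of $S$) against the maximality of $S$, which you package as $(\star)$. Your reduction via Lemma~\ref{lem:F} to the three vertices $u_2, v_2, w_1$ is valid and slightly cleaner than the paper's treatment (the paper argues directly for an arbitrary $a \in L$ with two $F$-edges, and then separately for $a \in \{u_2,v_2,w_1\}$), and your crux case is the paper's final paragraph run contrapositively: the paper locates the required second $S$-edge seen by $pc_2$ at $c_1$, $c_{21}$, or $c_{22}$ and prepends to $S$ the $H$-edge that sees it, while you use $(\star)$ to show no such second $S$-edge can exist at all.

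There is, however, one false intermediate claim in your crux case: ``both child-edges of $c_2$ lie in $H$.'' Since $pc_2$ is uncolored and every vertex other than $x$ and $w_{21}$ is incident to exactly one edge colored $1$ (and by Lemma~\ref{lem:girth} neither $c_2c_{21}$ nor $c_2c_{22}$ can be one of the three edges colored $2_1$), exactly one of $c_2c_{21}, c_2c_{22}$ is colored $1$; so only one child-edge of $c_2$ can lie in $H$. (Your parallel claim at $c_1$ is fine, because there it is $pc_1$ that carries the color $1$, so both child-edges of $c_1$ really are uncolored.) This invalidates your stated justification for ``no edge at $c_{21}$ or $c_{22}$ lies in $S$,'' which applies $(\star)$ separately to each child-edge of $c_2$. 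The conclusion itself is still true and the repair is one line: if, say, $c_2c_{21}$ is the $H$-edge at $c_2$, then $c_2c_{21}$ sees every edge incident to $c_{22}$ through the common edge $c_2c_{22}$, so $(\star)$ applied to $c_2c_{21}$ alone forbids all edges at $c_{21}$ and at $c_{22}$ from lying in $S$. With that correction your enumeration of the edges seen by $pc_2$ stands, $pc_2$ sees only $x\pi$ in $S$, and the contradiction with property (2) goes through as you intend.
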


		\begin{proof}
			In the following, let $a \in A \cup L$ with $N_G(a) = \{a_1, a_2, a_3\}$, and suppose on the contrary that $a$ is incident to at least two edges from $F$, say $aa_1, aa_2 \in F$.
			
			Suppose $a \in L$.  Since $F = [L, V(G)\setminus L]$ and $a \in L$, we must have $a_1, a_2 \in V(G)\setminus L$.  Therefore every edge incident to either $a_1$ or $a_2$ is already colored or is uncolored and in $S$.  Again since $a \in L$,  $a$ is incident to an uncolored edge that is not in $S$ (i.e., is in $H$), whose other endpoint must then be in $L$.  So $a_3 \in L$ and $aa_3 \in H$.  
			
			Now $a_1, a_2$ are incident to only edges that are already colored or are in $S$.  For each $a_i$, at most two of these edges can be colored, which means that each is incident to an edge in $S$.  This implies $aa_3$ sees two edges in $S$, and since $aa_3 \in H$, we know that $aa_3$ also sees four edges colored with 1.  Yet this contradicts the maximality of $S$ as we could place $aa_3$ at the start of $S$.
			
			Now suppose $a \in A$.   By Lemma \ref{lem:F},  we must have $a \in \{u_2, v_2, w_1\}$. Note that if $aa_3 \in F$, then because $u, v, w \notin L$, we must have $a \in L$ as $F = [L, V(G)\setminus L]$.  However, this contradicts the above.  So $aa_3 \notin F$.  In particular, if $a$ is $u_2, v_2, w_1$, respectively, then $a_3$ is $u, v, w$, respectively.  Furthermore, we can assume without loss of generality that $aa_1$ is colored with 1 and $aa_2$ is uncolored.
			
			Again if $a \in L$, then we are done by the above.  So we must have $a \in V(G)\setminus L$, and $a_1, a_2 \in L$.  In addition, since $a \notin L$, we know $aa_2$ is in $S$ as it is uncolored and has an endpoint not in $L$, namely $a$.  
			
			Since $a_1, a_2 \in L$, they must be incident to an uncolored edge that is not in $S$.  For $i \in \{1,2\}$, let $N_G(a_i) = \{a, a_{i1}, a_{i2}\}$.  Without loss of generality, suppose $a_1a_{11}$ and $a_2a_{21}$ are uncolored edges not in $S$.  Each of these edges see four edges colored with 1, and they each see $aa_2$, which is an edge in $S$.  We will show that at least one of these edges will also see a second edge in $S$, getting a contradiction to the maximality of $S$, as we could add this edge to the start of $S$.
			
			Note that no edge incident to $u_2, v_2$ or $w_1$ is in $S_0$ by Lemma \ref{lem:girth}, so that $aa_2 \notin S_0$.  Therefore, $aa_2 \in S - S_0$ so that $aa_2$ must see at least two edges in $S$.  If $a$ is $u_2, v_2, w_1$, respectively, then one such edge is $xu, xv, xw$, respectively.  The other edge must be incident to either $a_1$, $a_{21}$, or $a_{22}$.  If it is incident to $a_1$, then $a_1a_{11}$ will see it, and if it is incident to $a_{21}$ or $a_{22}$, then $a_2a_{21}$ will see it.  In either case, we contradict the maximality of $S$.
		\end{proof}

		Now let $V_F$ be the set of vertices in $V(G)\setminus L$ that are incident to an edge from $F$.  We are now ready to partition $V(G)\setminus L$ into sets $M$ and $R$.  If $w$ or $w_1$  is in $V_F$, then let  $M = \{x, u, v, w\} \cup V_F$, and if neither $w$ nor $w_1$ is in $V_F$, then let $M = \{x, u, v\} \cup V_F$.  In either case, let $R = V(G)\setminus (L \cup M)$, and let $F' = [M, R]$. See Figure \ref{fig:tree} for an example.
		
		\begin{figure}[h]
			\centering
			\includegraphics{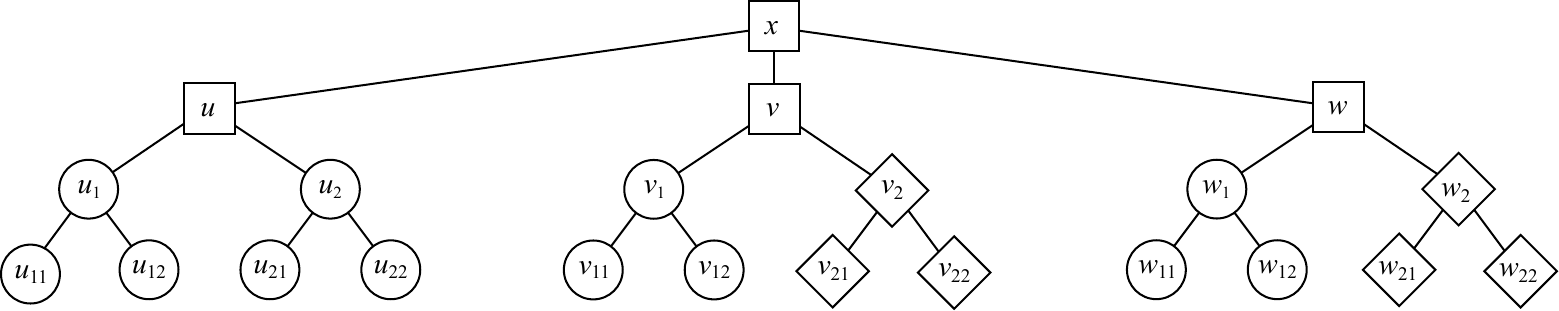}
			\caption{A possible partition of the vertices with $M=\{x,u,v,w\}$ (square vertices), $L$ (circle vertices), and $R$ (diamond vertices), where $F=\{uu_1, uu_2, vv_1, ww_1\}$.}
			\label{fig:tree}
		\end{figure}
		
		At this point, we now remove all colorings on $E(G)$ and resort only to structural arguments.  As a result, we can now appeal to symmetry even when involving $w$ and $w_1$ (e.g., $F = \{uu_2, v_2v_{21}, w_1w_{12}\}$ is the same as $F = \{u_2u_{22}, v_2v_{21}, ww_1\}$).  
		
		Note that every edge of $F$ appears in exactly one of the following three sets: $F_U = \{uu_1, uu_2, u_2u_{21}, u_2u_{22}\}, F_V = \{vv_1, vv_2, v_2v_{21}, v_2v_{22}\}$, and $F_W = \{ww_1, w_1w_{11}, w_1w_{12}\}$.  By Lemma \ref{lem:AF}, $|F \cap F_U| \le 2, |F \cap F_V| \le 2$, and $|F \cap F_W| \le 1$, so that $|F| \le 5$.  Further, $|F \cap F_U| \ge 1$ as otherwise $|F| \le 3$, and by Lemma \ref{lem:3cut} we must have equality and $F$ must be an induced matching; however this is impossible if $|F \cap F_V| =2$.  Similarly, we must have $|F \cap F_V| \ge 1$.  Note that if $|F \cap F_W| = 0$, then $w \in R$.

		
		\begin{lemma}\label{lem:uu1uu2}
			If $uu_1, uu_2 \in F$, then without loss of generality, we may assume $F = \{uu_1, uu_2, v_2v_{21}, w_1w_{11}\}$.
		\end{lemma}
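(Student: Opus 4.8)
The plan is to combine the structural constraints already proved for $F=[L,V(G)\setminus L]$ with a study of the complementary cut $F'=[M,R]$, eliminating every configuration except the claimed one. First I would pin down the $U$-part. Since $uu_1,uu_2\in F$ and $u_1,u_2\in A$, Lemma~\ref{lem:AF} forbids any further $F$-edge at $u_1$ or $u_2$, so $F\cap F_U=\{uu_1,uu_2\}$ and $u_1,u_2\in L$ (recall $x,u,v,w\notin L$). Because $uu_1$ and $uu_2$ meet at $u$, the cut $F$ is not a matching, so Lemma~\ref{lem:3cut} rules out $|F|=3$; together with the fact from Lemma~\ref{lem:girth} that $G$ has no edge cut of size at most two, this gives $4\le|F|\le 5$. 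As $|F\cap F_U|=2$, $|F\cap F_V|\le 2$, and $|F\cap F_W|\le 1$, the surviving possibilities are exactly $|F\cap F_V|\in\{1,2\}$ and $|F\cap F_W|\in\{0,1\}$ with $|F\cap F_V|+|F\cap F_W|\ge 2$.

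The engine for the reduction is the complementary cut. For each candidate $F$ I would form $V_F$, the sets $M,R$, and $F'=[M,R]$ exactly as in the Setup; since $M\supseteq V_F$, every $F$-edge has its non-$L$ endpoint in $M$, whence $[L,R]=\emptyset$ and $F'=[L\cup M,R]$, and Lemma~\ref{lem:F} then lets me list the edges of $F'$ explicitly. Two saturated branches, i.e.\ $\{uu_1,uu_2,vv_1,vv_2\}\subseteq F$, force $M=\{x,u,v\}$ (or $\{x,u,v,w\}$) and collapse $F'$ to a single edge at $w$ (or to a $2$-edge cut), contradicting Lemma~\ref{lem:girth}; the mixed two-edge cases $F\cap F_V=\{vv_1,v_2v_{2j}\}$ combined with $|F\cap F_W|=0$ or with the $W$-edge $ww_1$ likewise shrink $F'$ to size at most two, again contradicting Lemma~\ref{lem:girth}. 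This disposes of the two-saturated-branch configurations and of most of the $|F|=5$ configurations.

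With those removed, only $|F|=4$ with one $V$-edge and one $W$-edge survives, together with the single lingering $|F|=5$ configuration $\{uu_1,uu_2,vv_1,v_2v_{21},w_1w_{11}\}$. Here I would exploit the within-branch symmetry (after erasing the colours, $u_1\leftrightarrow u_2$, $v_{21}\leftrightarrow v_{22}$, $w_{11}\leftrightarrow w_{12}$, and the branch permutations are all symmetries) to normalise the surviving single edges. The key numeric observation is that a $V$- or $W$-edge sitting at depth one-to-two (that is, $vv_1,vv_2$, or $ww_1$) places its free child outside $M$ and makes $F'$ a $3$-edge cut, whereas the depth-two-to-three edges $v_2v_{21},w_1w_{11}$ keep the sibling edges $v_2v_{22},w_1w_{12}$ in $F'$ and make $F'=\{vv_1,v_2v_{22},ww_2,w_1w_{12}\}$ a $4$-edge matching. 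Thus after normalisation the only configuration whose complementary cut is a genuine $4$-cut is $F=\{uu_1,uu_2,v_2v_{21},w_1w_{11}\}$.

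The hard part is finishing off the cases in which $F'$ turns out to be a $3$-edge cut (the depth one-to-two sub-cases and the residual $|F|=5$ configuration): these are not immediately contradictory, since a $3$-regular graph may well contain $3$-edge cuts. For each such $F'$ I would verify that it is an induced matching and then apply Lemma~\ref{lem:3cut} with $X=L\cup M$, taking good colourings of $G[L\cup M]$ and $G[R]$ from minimality. Using the extra freedom coming from the $F$-side, where $u$ is already enclosed by $L$, I would argue that the forced pattern in Lemma~\ref{lem:3cut} (equal colour-sets at the three $M$-endpoints and a $6$-element union at the $R$-endpoints) cannot be maintained, so the two colourings combine into a good colouring of $G$, contradicting that $G$ is a counterexample. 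Carrying out this colour-combination bookkeeping uniformly across all the $3$-cut sub-cases, and checking at each step that $F'$ really is an induced matching of the asserted size, is the main obstacle; once it is done, the only surviving configuration is, up to the symmetries above, $F=\{uu_1,uu_2,v_2v_{21},w_1w_{11}\}$, as claimed.
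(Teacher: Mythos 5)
Your overall skeleton matches the paper's: pin down $F\cap F_U=\{uu_1,uu_2\}$ via Lemma~\ref{lem:AF}, get $4\le|F|\le 5$ from Lemma~\ref{lem:girth} and the non-matching observation, and eliminate configurations by computing the complementary cut $F'$ and invoking the small-cut lemmas. Your eliminations of the saturated-branch cases and of the cases where $F'$ collapses to size at most two are correct (the paper handles the saturated cases by a replacement trick, swapping $uu_1,uu_2$ for $xu$, but your route through $F'$ works equally well).

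The genuine gap is in your final paragraph, where you misidentify the mechanism that kills the remaining cases. You assert that when $F'$ is a $3$-edge cut the configuration is ``not immediately contradictory,'' and you plan to \emph{verify} that $F'$ is an induced matching and then run a colouring/bookkeeping argument against the forced pattern of Lemma~\ref{lem:3cut}. But in every one of those residual cases $F'$ is \emph{not} an induced matching: whenever $w_1w_{11}\in F$ the edges $ww_2$ and $w_1w_{12}$ both lie in $F'$ and see each other through $ww_1$; whenever $v_2v_{21}\in F$ but $vv_1\notin F$ (with $ww_1\in F$) the edges $vv_1$ and $v_2v_{22}$ both lie in $F'$ and see each other through $vv_2$. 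Since Lemma~\ref{lem:3cut} states that \emph{every} nontrivial $3$-edge cut of $G$ must be an induced matching, this failure is itself the contradiction --- no colouring of $G[L\cup M]$ or $G[R]$ is ever needed, and the ``colour-combination bookkeeping'' you defer as the main obstacle could never be carried out, because its premise (that $F'$ passes the induced-matching check) is false in each case. This is exactly how the paper closes the proof: the only configuration, up to symmetry, whose complementary cut survives both Lemma~\ref{lem:girth} and the induced-matching requirement of Lemma~\ref{lem:3cut} is $F=\{uu_1,uu_2,v_2v_{21},w_1w_{11}\}$, with $F'=\{vv_1,v_2v_{22},ww_2,w_1w_{12}\}$. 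A smaller slip in the same paragraph: your claim that any depth one-to-two edge forces $|F'|=3$ is not right as stated, e.g.\ $F=\{uu_1,uu_2,vv_1,ww_1\}$ gives $F'=\{vv_2,ww_2\}$, a $2$-cut (which of course also dies, but by Lemma~\ref{lem:girth} rather than by your stated dichotomy).
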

		
		\begin{proof}
			Suppose on the contrary that $uu_1, uu_2 \in F$.  By Lemma \ref{lem:AF} no other edge from $F_U$ is in $F$.   If both $vv_1, vv_2 \in F$ as well, then we can replace $uu_1, uu_2$ with $xu$ and $vv_1, vv_2$ with $xv$  in $F$ to obtain a new edge cut.  Since $|F| \le 5$ by Lemma \ref{lem:AF}, this implies this new edge cut has size at most 3.  However, by Lemma~\ref{lem:3cut} it must have size exactly 3 and be an induced matching, which it is not as both $xu, xv$ are in this edge cut.
			
			Similarly, we cannot have $F \cap F_W = \emptyset$ as otherwise $|F| \le 4$ and we can replace $uu_1, uu_2$ with $xu$ in $F$ to obtain a new edge cut.  This edge cut will have size at most 3 and contain two edges from $F_V$, which will mean that this new edge cut cannot be an induced matching, contradicting Lemma \ref{lem:3cut}.  Furthermore, $F \cap F_V \neq \emptyset$, as otherwise $|F| = 3$ and we again contradict Lemma \ref{lem:3cut}.
			
			So suppose $ww_1 \in F$ so that $ww_2 \in F'$.   If $|F \cap F_V| = 2$, then as we have already shown we cannot have both $vv_1, vv_2 \in F$, without loss of generality we must have $vv_1, v_2v_{21} \in F$.  However ths implies $F' = \{v_2v_{22}, ww_2\}$ contradicting  Lemma~\ref{lem:girth}.  Similarly, if $F \cap F_V = \{vv_1\}$ (or symmetrically $\{vv_2\}$), then $F' = \{vv_2, ww_2\}$, again contradicting Lemma \ref{lem:girth}.  Similarly, if $F \cap F_V = \{v_2v_{21}\}$ (or symmetrically $\{v_2v_{22}\}$), then $F' = \{vv_1, v_2v_{22}, ww_2\}$, which again contradicts Lemma \ref{lem:3cut} as it is not an induced matching.

			So without loss of generality, $w_1w_{11} \in F$ so that $w_1w_{12}, ww_2 \in F'$, and $w, w_1 \in M$.  As above if either $|F \cap F_V| = 2$, or if $F \cap F_V$ is either $\{vv_1\}$ or $\{vv_2\}$, then we get $|F'| =3$, but $F'$ will not be an induced matching because of $w_1w_{12}, ww_2 \in F'$, contradicting Lemma~\ref{lem:3cut}.  So we may assume without loss of generality that $F = \{uu_1, uu_2, v_2v_{21}, w_1w_{11}\}$ so that $F' = \{vv_1, v_2v_{22}, ww_2, w_1w_{12}\}$.  
		\end{proof}
		
		
		\subsection*{Final Cases}
		
		We will now finish our proof of Theorem \ref{LSS1} using cases based on what $F$ can be.  First note that we cannot have $|F|\le 3$ with $uu_1, uu_2 \in F$, as this would give an edge cut of size at most 3 that is not an induced matching (or an edge cut of size 2). For the same reason, we cannot have $|F|\le 3$ with $uu_1, u_2u_{21} \in F$. The cases that remain will follow the same general idea.  We first use the minimality of $G$ to obtain good colorings of $G[L]$ and $G[R]$.  We then consider $G[L \cup  M]$ (or $G[M \cup R]$) and adjust the coloring slightly to get a new good partial coloring.  Lastly, we extend this to a good coloring of $G[L\cup M]$ (or $G[M \cup R]$) in such a way that we can apply Lemma \ref{lem:3cut} to obtain a contradiction or use Lemma \ref{lem:4cut} to finish a good coloring of $G$.
		
		To be more precise, suppose we intend to extend the coloring of $G[L]$ to $G[L \cup M]$ (if we extend from $G[R]$ to $G[M \cup R]$, then we switch $L$ with $R$, and $F$ with $F'$ in the following).  We focus only on $G[L\cup M]$ and the good partial coloring imposed on it from our good coloring of $G[L]$.  We then color an edge from $F$ with 1 if and only if it is not incident to an edge from $F'$ in $G$ (when extending the coloring from $G[L]$ to $G[L \cup M]$ we will ultimately color every edge in $F'$ with 1).  
		
		We then perform an `adjustment' in our coloring of $G[L]$ as follows.  For each $a \in L$ such that $a$ is incident to an edge from $F$ (call this edge $ab$) that is not colored with 1, we consider the two edges incident to $a$ that are not in $F$ (both $F$ and $F'$ will always be matchings in the following).  Note that these two edges must be colored with 2-colors as $a$ had degree two in $G[L]$.  The adjustment is to recolor one of these two edges with  1 if possible and still keep a good partial coloring of $G[L \cup M]$.   If we can perform this recoloring,  $ab$ will now have at least two available 2-colors that we can color it with.  If we cannot perform this recoloring, then the two neighbors of $a$ in $L$ must both be incident to an edge colored with 1, which implies $ab$ will have  at least three available 2-colors that we can color it with.  So we may always assume that after adjustment, all such edges will have at least two 2-colors available.
		
		Furthermore, if $bc$ is an edge where $c \in M$, then if we can recolor an edge incident to $a$ with 1, then $bc$ will see one less edge in $G[L \cup M]$ that is colored with a 2-color.  In particular,
		\begin{equation}\label{eq:adjust}
			\text{if } bc  \text{ sees two edges colored with a 2-color incident to } a, \text{ then } ab \text{ will have at least three available 2-colors.}
		\end{equation}
		This fact about the adjustments will be very useful in the following cases. 
		
		\begin{figure}
			\centering
			\includegraphics[scale=0.7]{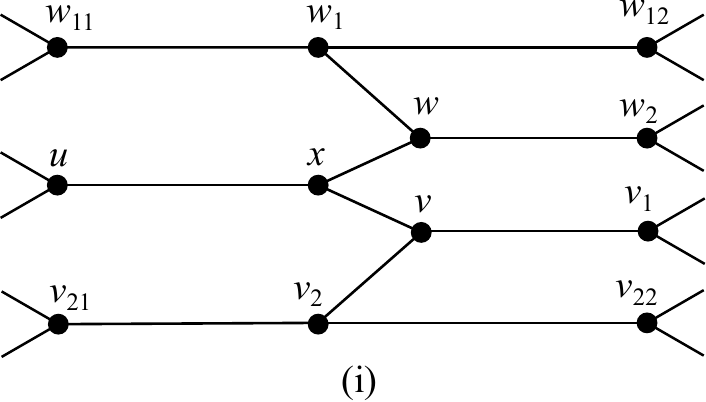} \hspace{0.5in} \includegraphics[scale=0.7]{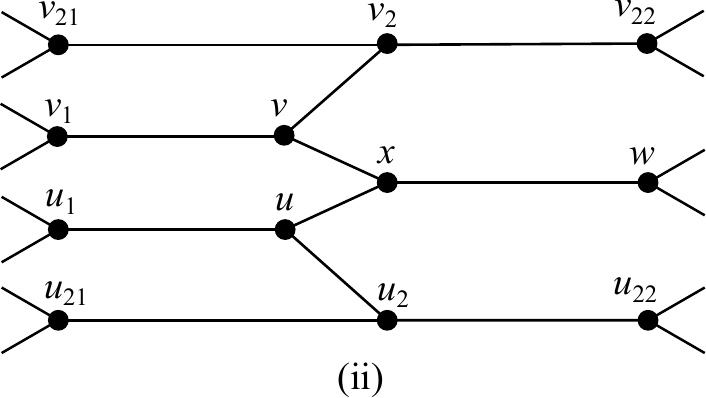}
			
			\vspace{.25in}
			\includegraphics[scale=0.7]{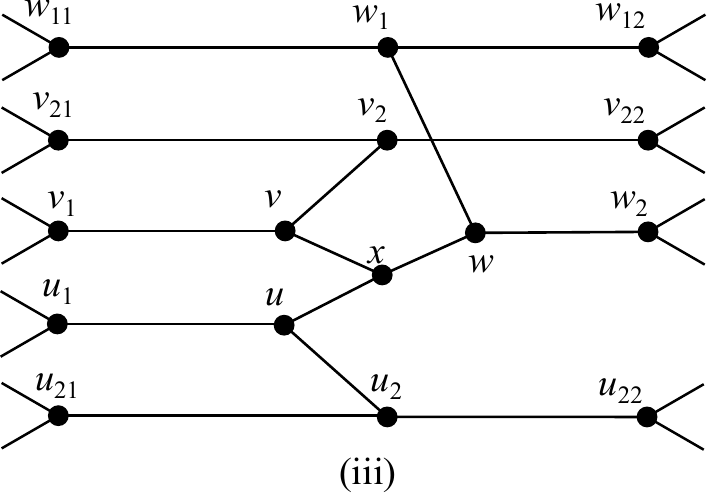} \hspace{0.5in} \includegraphics[scale=0.7]{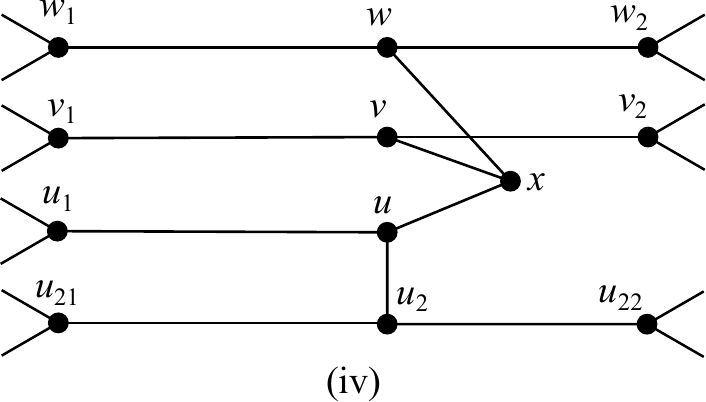}
			
			\vspace{.25in}
			\includegraphics[scale=0.7]{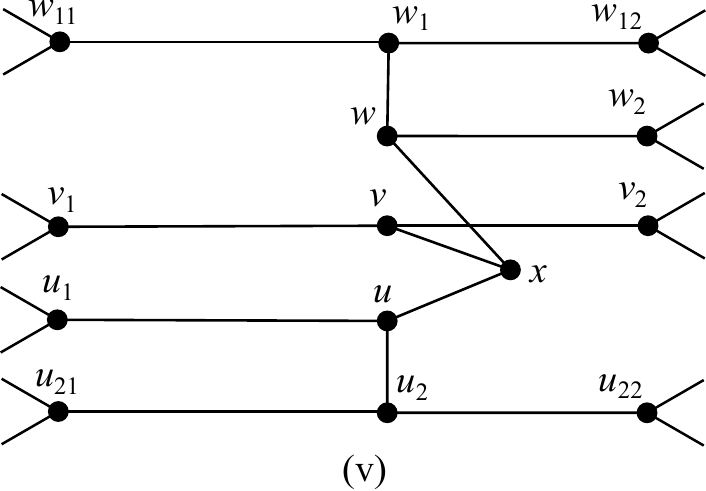} \hspace{0.5in} \includegraphics[scale=0.7]{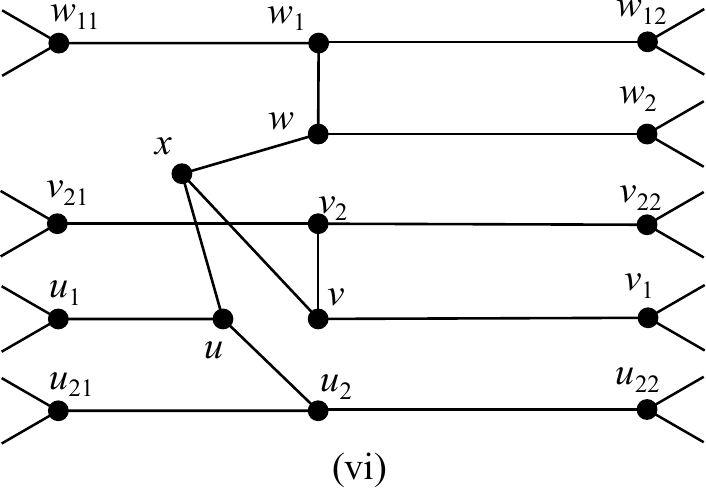}
			
			\vspace{.25in}
			\includegraphics[scale=0.7]{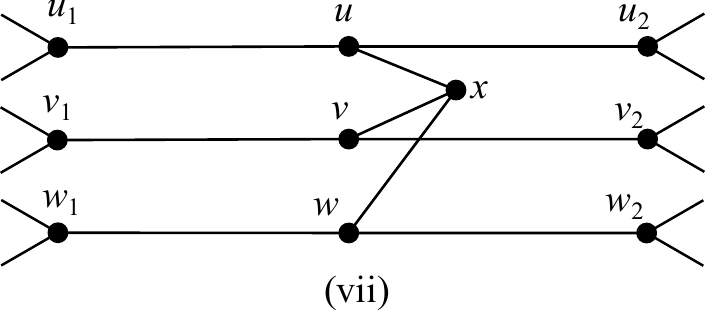} \hspace{0.5in} \includegraphics[scale=0.7]{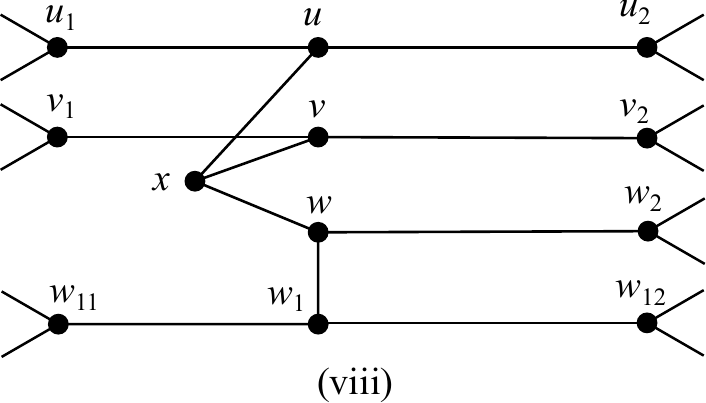}
			
			\vspace{.25in}
			\includegraphics[scale=0.7]{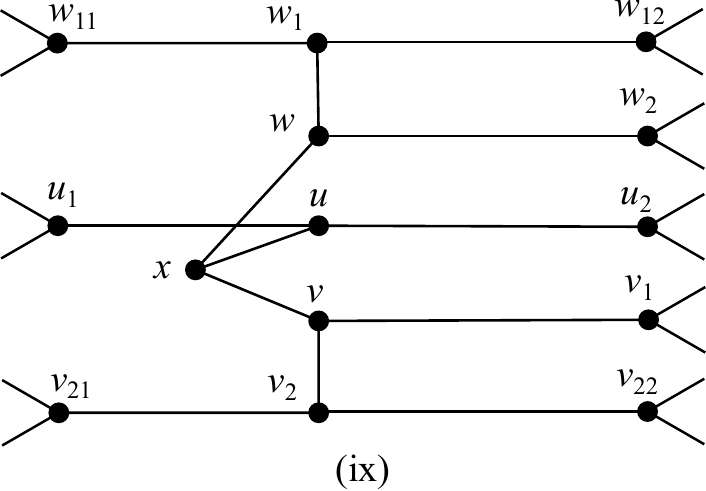} \hspace{0.5in} \includegraphics[scale=0.7]{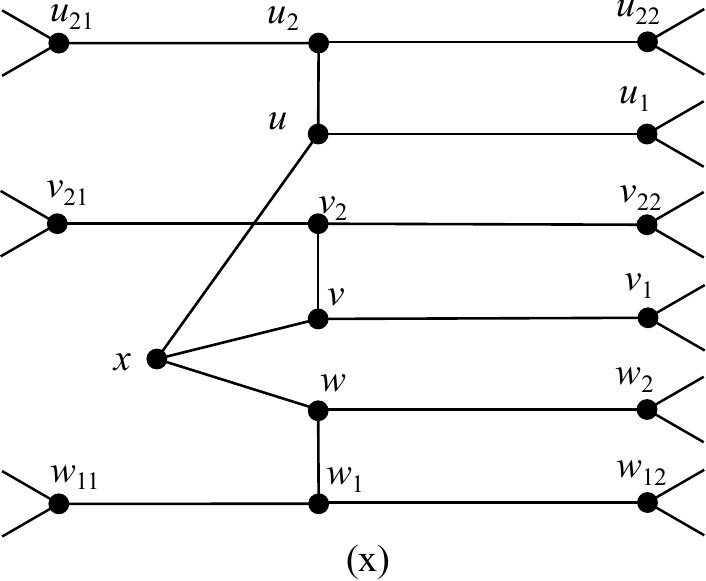}
			
			\caption{Final cases on the edge cut $F$ (up to symmetry).}
			\label{fig:finalcases}
		\end{figure}

		
		\begin{newcase}\label{case:uu1uu2}
			$F = \{uu_1, uu_2, v_2v_{21}, w_1w_{11}\}$.
		\end{newcase}
		
		\begin{proof}
			We now replace $L$ and $M$  with $L'$ and $M'$, respectively, where $L' = L \cup \{u\}$ and $M' = M \setminus \{u\}$.  As a result $[L', M' \cup R] = \{xu, v_2v_{21}, w_1w_{11}\}$, as seen in Figure \ref{fig:finalcases}(i).  We will now proceed in a manner that we will continue to use in subsequent cases.  
			
			By the minimality of $G$, both $G[L']$ and $G[R]$ have good colorings.  Impose the good coloring of $G[R]$ onto $G[M' \cup R]$ and color $ww_2, vv_1$ with 1.  We perform the adjustment on the two colored edges incident to $w_{12}$; that is, if we can recolor either of these two edges with a 1, then we do so.   We repeat this adjustment with  $v_{22}$.  Call this new good partial coloring of $G[M' \cup R]$, $\phi$.

			So $|A^2_\phi(w_1w_{12})| \ge 2, |A^2_\phi(v_2v_{22})| \ge 2, |A^2_\phi(ww_1)| \ge 3, |A^2_\phi(vv_2)| \ge 3, |A^2_\phi(xw)| \ge 5$, and $|A^2_\phi(xv)| \ge 5$.  If we can color both $vv_2$ and $ww_1$ the same, say with $2_1$, then we can color $w_1w_{12}, v_2v_{22}, xw, xv$ in this order.  This will yield a good coloring of $G[M' \cup R]$ that contradicts Lemma \ref{lem:3cut}, as the edges in $G[M' \cup R]$ that are incident to the edges in $[L', M' \cup R]$ will not be colored with all different colors ($ww_1$ and $vv_2$ are colored the same), nor will they have only two colors ($w_1w_{12}, ww_1, xw$ must each get different colors).
			
			So we cannot color both $vv_2$ and $ww_1$ the same.  This implies that without loss of generality, $|A^2_\phi(vv_2)|= 3$, which by (\ref{eq:adjust}) implies that $|A^2_\phi(v_2v_{22})| \ge 3$.  So we can color $v_2v_{22}$ and $xw$ the same, say with $2_1$.  We then color $w_1w_{12}, ww_1, vv_2, xv$ in this order to obtain a good coloring of $G[M' \cup R]$ that again contradicts Lemma \ref{lem:3cut}.
		\end{proof}
		
		Case \ref{case:uu1uu2},  Lemma \ref{lem:uu1uu2}, and symmetry, show that $F$ must be a matching in $G$ and furthermore, if $|F \cap F_U| =2$, then without loss of generality, $F \cap F_U = \{uu_1, u_2u_{21}\}$, and symmetrically, if $|F \cap F_V| = 2$, then without loss of generality, $F \cap F_V = \{vv_1, v_2v_{21}\}$.
		
		
		\begin{newcase}
			$F=\{uu_1, u_2u_{21}, vv_1, v_2v_{21}\}$.
		\end{newcase}
		
		\begin{proof}
			We have $F'=\{u_2u_{22}, v_2v_{22}, xw\}$ and $M=\{x,u,v,u_2,v_2\}$ as shown in Figure \ref{fig:finalcases}(ii). By the minimality of $G$, both $G[L]$ and $G[R]$ have good colorings. So we impose the good coloring of $G[L]$ onto $G[L\cup M]$, color $vv_1$ and $uu_1$ with 1, and perform the adjustment on the colored edges incident to $u_{21}$ and $v_{21}$. Note that after performing the adjustment, we must have $|A^2(u_2u_{21})|\ge 3$, or $|A^2(v_2v_{21})|\ge 3$, or both $|A^2(uu_2)|\ge 4$ and $|A^2(vv_2)|\ge 4$. Our goal will be to color the edge set $H=\{v_2v_{21}, vv_2, xv, xu, uu_2,u_2u_{21}\}$ with at most 5 colors, which contradicts Lemma~\ref{lem:3cut}.
			
			Suppose that $|A^2(u_2u_{21})|\ge 3$. Since $|A^2(xv)|\ge 5$, then we can color $u_2u_{21}$ and $xv$ with the same color, say $2_1$. Then we can finish greedily by coloring $v_2v_{21}$, $vv_2$, $uu_2$, and $xu$ in this order. 
			
			Now suppose that $|A^2(v_2v_{21})|\ge 3$. Since $|A^2(xu)|\ge 5$, then we can color $v_2v_{21}$ and $xu$ with the same color, say $2_1$. Then we can finish greedily by coloring $u_2u_{21}$, $uu_2$, $vv_2$, and $xv$ in this order. 
			
			Else, we must have $|A^2(uu_2)|\ge 4$ and $|A^2(vv_2)|\ge 4$. Then we can color $uu_2$ and $vv_2$ with the same color, say $2_1$. Then we can finish greedily by coloring $u_2u_{21}$, $v_2v_{21}$, $xu$, and $xv$ in this order.
		\end{proof}
		
		
		\begin{newcase}
			$F=\{uu_1, u_2u_{21}, vv_1, v_2v_{21}, w_1w_{11}\}$.
		\end{newcase}
		
		\begin{proof}
			We have $F'=\{u_2u_{22}, v_2v_{22}, w_1w_{12}, ww_2\}$ and $M=\{x, u, v, w, u_2, v_2, w_1\}$ as in Figure \ref{fig:finalcases}(iii). By the minimality of $G$, both $G[L]$ and $G[R]$ have good colorings. Now impose the coloring of $G[L]$ onto $G[L\cup M]$ and color $vv_1$ and $uu_1$ with 1. We perform the adjustment on the colored edges incident to $u_{21}$, $v_{21}$, and $w_{11}$ so that $|A^2(u_2u_{21})|, |A^2(v_2v_{21})|, |A^2(w_1w_{11})|\ge 2$. Call this good partial coloring $\phi$. Since $|A^2_\phi(ww_1)|\ge 5$ and $|A^2_\phi(uu_2)|\ge 3$, these edges have at least one available color in common, say $2_1$, so we use this color on both $ww_1$ and $uu_2$. We now consider cases based on how $\phi$ can extend to $u_2u_{21}$ and $w_1w_{11}$. Our goal will be to extend the coloring $\phi$ to $G$ while coloring the edge set $H=\{uu_2, u_2u_{21}, vv_2, v_2v_{21}, xw, ww_1, w_1w_{11}\}$ with exactly 5 colors to make use of Lemma \ref{lem:4cut}. 
			
			\textbf{Case 3.1.} Suppose that $u_2u_{21}$ and $w_1w_{11}$ can receive distinct colors, so without loss of generality assume $u_2u_{21}$ gets color $2_2$ and $w_1w_{11}$ is gets color $2_3$. We proceed by coloring $vv_2$ with $2_\alpha \neq 2_1$ and $v_2v_{21}$ with $2_\beta \neq 2_1$ and consider cases based on $2_\alpha$ and $2_\beta$.
			
			First suppose that $2_\alpha, 2_\beta \not \in \{2_2, 2_3\}$, say $2_\alpha, 2_\beta \in \{2_4, 2_5\}$. Our goal is to color $xu$, $xv$, and $xw$ in such a way that we reuse some color in $\{2_1, 2_2, 2_3, 2_4, 2_5\}$ at $xw$, so that $H$ sees exactly 5 colors. Since $|A^2(xu)|\ge 2$, we can color $xu$ with a color $2\gamma \neq 2_\beta$. Since $xv$ now has at least one color available and $xv$ already sees $2_\beta$, we can color $xv$ with some color $2_\delta \neq 2_\beta$. Then we can color $xw$ with $2_\beta$, which finishes this case. 
			
			Now assume exactly one of $2_\alpha$ or $2_\beta$ belongs to the set $\{2_2, 2_3\}$. Note that the other color must be outside this set, say $2_4$, and the edges $xu$, $xv$, $xw$ have at least 2, 2, 4 colors available, respectively. So we can finish the coloring greedily in this order, ensuring that $xw$ receives color $2_\gamma \not \in \{2_1, 2_2, 2_3, 2_4\}$ so that the edges set $H$ receives 5 colors.
			
			Lastly, suppose that $2_\alpha, 2_\beta \in \{2_2, 2_3\}$. Since $|A^2_\phi (vv_2)|\ge 3$, then we must have $A^2_\phi (vv_2)=\{2_1, 2_2, 2_3\}$. We then uncolor the edges $vv_2$ and $ww_1$ and color the edge $vv_2$ with $2_1$. Since the edge $ww_1$ has at least 4 colors available, we can color $ww_1$ with color $2_\gamma \not \in \{2_1, 2_2, 2_3\}$, say $2_\gamma = 2_4$. As we plan to apply Lemma \ref{lem:4cut}, note that no color can be incident to all of the edges in $F'$. Since $xu$, $xv$, $xw$ have at least 2, 2, 4 colors available, respectively, we can finish the coloring greedily in this order, ensuring that $xw$ receives color $2_\delta \not \in \{2_1, 2_2, 2_3, 2_4\}$ so that the edges set $H$ receives 5 colors.
			
			\textbf{Case 3.2.} Now assume $u_2u_{21}$ and $w_1w_{11}$ must receive the same color, say $2_2$.
			Note that this implies that $w_{11}u_{21}\not \in E(G)$. Note that since $|A^2(vv_2)|\ge 3$, we can color $vv_2$ with color $2_\alpha \not \in \{2_1,2_2\}$, say $2_\alpha = 2_3$.
			
			Suppose we can color $v_2v_{21}$ with color $2_\beta \not \in \{2_1,2_2\}$, say $2_\beta=2_4$. Then since $|A^2(xu)|\ge 2$, $|A^2(xv)|\ge 2$, and $|A^2(xw)|\ge 4$, we can finish greedily while also ensuring that $xw$ receives color $2_\gamma \not \in \{2_1, 2_2, 2_3, 2_4\}$ so that the edges set $H$ receives 5 colors. 
			
			Otherwise, the edge $v_2v_{21}$ must receive color $2_1$ or $2_2$. Since $|A^2(v_2v_{21})|\ge 2$ we must have $A^2(v_2v_{21})=\{2_1,2_2\}$, so we color $v_2v_{21}$ with $2_1$. We uncolor $ww_1$ and $w_1w_{11}$ and color $w_1w_{11}$ with $2_1$. Since $|A^2(ww_1)|\ge 4$, we can color the edge $ww_1$ with color $2_\gamma \not \in \{2_1, 2_2, 2_3\}$, say $2_\gamma = 2_4$. As we plan to apply Lemma \ref{lem:4cut}, note now that no color can be incident to all of the edges in $F'$. Since $xu$, $xv$, $xw$ have at least 1, 2, 3 colors available, respectively, we can finish the coloring greedily in this order, ensuring that $xw$ receives color $2_\gamma \not \in \{2_1, 2_2, 2_3, 2_4\}$ so that the edges set $H$ receives 5 colors.
		\end{proof}
		
		
		\begin{newcase}
			$F=\{uu_1, u_2u_{21}, vv_1, ww_1\}$.
		\end{newcase}
		
		\begin{proof}
			We have $F'=\{u_2u_{22}, vv_2, ww_2\}$ and $M=\{x, u, u_2, v, w\}$ as in Figure \ref{fig:finalcases}(iv). By the minimality of $G$, both $G[L]$ and $G[R]$ have good colorings. Now impose the good coloring of $G[L]$ onto $G[L\cup M]$ and color $uu_1$ with a 1. We perform the adjustment on the colored edges incident to $u_{21}$, $v_1$, and $w_1$ so that each of the edges $u_2u_{21}$, $vv_1$, and $ww_1$ have at least 2 colors available. Our goal is to color the six edges $xv, xw, u_2u_{21}, uu_2, vv_1, ww_1$ with at most 5 colors, which would contradict Lemma \ref{lem:3cut}.  
			
			We begin by considering the colors available on $uu_2, vv_1, ww_1$. If at least two of these edges have an available color in common, say $2_1$, then we color the respective edges with $2_1$ and greedily color the remaining edge (if there is one). Since $u_2u_{21}, xu, xv,xw$ have at least 1, 3, 3, 3 colors available, respectively, we can finish greedily the coloring in that order if $|A^2(u_2u_{21})\cup A^2(xu) \cup A^2(xv) \cup A^2(xw)| \ge 4$. If $ |A^2(u_2u_{21})\cup A^2(xu) \cup A^2(xv) \cup A^2(xw)| = 3$ then we use the same color on $u_2u_{21}$ and then greedily color the rest two edges. 
			
			Otherwise, the edges $uu_2, vv_1, ww_1$ have no color in common. By the adjustment, this implies that $ww_1, vv_1, uu_2$ have exactly 2, 2, 3 colors available, respectively. This also means that $xv, xw$ have at least 6 colors available. Since $|A^2(u_2u_{21})|\ge 2$ and $|A^2(xw)|\ge 6$, then these edges must have an available color in common, say $2_1$. We proceed by coloring $u_2u_{21}$ and $xw$ with $2_1$. Since the edges $vv_1, ww_1, uu_2, xu, xv$ have at least 1, 1, 2, 4, 5 colors available, respectively, then we can finish the coloring greedily in this order. 
		\end{proof}   
		
		
		\begin{newcase} \label{config:6.3}
			$F=\{uu_1, u_2u_{21}, vv_1, w_1w_{11}\}$.
		\end{newcase}
		
		\begin{proof}
			We have $F'=\{u_2u_{22}, vv_2, w_1w_{12}, ww_2\}$ and $M=\{x, u, u_2, v, w, w_1\}$ as in Figure \ref{fig:finalcases}(v). By the minimality of $G$, both $G[L]$ and $G[R]$ have good colorings. Now impose the good coloring of $G[L]$ onto $G[L\cup M]$ and color $uu_1$ with a 1. We perform the adjustment on the colored edges incident to $u_{21}$, $v_1$, and $w_{11}$ so that each of the edges $u_2u_{21}$, $vv_1$, and $ww_1$ have at least 2 colors available. Call this good partial coloring $\phi$. Our goal will be to extend the coloring $\phi$ to $G$ while coloring the edge set $H=\{uu_2, u_2u_{21}, vv_1, xv, xw, ww_1, w_1w_{11}\}$ with exactly 5 colors to make use of Lemma \ref{lem:4cut}. 
			
			First note that $xw$ initially has 7 colors available, which will be important in our last case below. Since $|A^2_\phi (uu_2)| \ge 3$ and $|A^2_\phi (ww_1)| \ge 5$, the edges $uu_2$ and $ww_1$ have an available color in common, say $2_1$, so we color both these edges with $2_1$. Since $vv_1$ has at least 2 colors available, we color $vv_1$ with color $2_\alpha \neq 2_1$. Since the edges $u_2u_{21}$ and $w_1w_{11}$ both have at least 1 color available, we can color these edges with colors $2_\beta$ and $2_\gamma$, respectively. We now consider cases based on the colors $2_\alpha, 2_\beta, 2_\gamma$. 
			
			Suppose that $vv_1, u_2u_{21}, w_1w_{11}$ all receive the same color, say $2_2$. That is, $2_2=2_\alpha=2_\beta = 2_\gamma$. Then we uncolor $uu_2$ and since this edge has at least 2 colors available, we color $uu_2$ with some color not equal to $2_1$ or $2_2$, say we color $uu_2$ with $2_3$. The edges $xu, xv, xw$ have at least 2, 2, 4 colors available, respectively, so we can finish the coloring greedily in this order so that $H$ is colored with 5 colors.
			
			Suppose that only two of the edges $vv_1, u_2u_{21}, w_1w_{11}$ receive the same color. Then since $xu, xv, xw$ have at least 3, 3, 4 colors available (or 3, 4, 3, or 4, 3, 3, depending on which two of $vv_1, u_2u_{21}, w_1w_{11}$ receive the same color), we can finish the coloring greedily in this order so that $H$ is colored with 5 colors.

			Suppose that $vv_1, u_2u_{21}, w_1w_{11}$ receive distinct colors. Since $xw$ initially had all 7 colors available, we know that the color $2_\beta$ (used on $u_2u_{21}$) is available, so we color $xw$ with $2_\beta$. Then $xu, xv$ both have at least 2 colors available, so we can finish the coloring greedily. If $xv$ receives color $2_\delta$ that is not an element of the set $C = \{2_1, 2_\alpha, 2_\beta, 2_\gamma\}$, then we are done since we've used exactly 5 colors on the edges of $H$. Else, if $2_\delta \in C$, then we uncolor the edge $xw$. Since $xw$ initially had all 7 colors available, we can recolor $xw$ with some color not in $C$, thus ensuring $H$ is colored with 5 colors.
		\end{proof}
		
		Observe that $F=\{uu_1, u_2u_{21}, v_2v_{21}, ww_1\}$ is symmetric to Case \ref{config:6.3}.
		
		
		\begin{newcase}
			$F=\{uu_1, u_2u_{21}, v_2v_{21}, w_1w_{11}\}$.
		\end{newcase}
		
		\begin{proof}
			We have $F'=\{u_2u_{22}, vv_1, v_2v_{22}, w_1w_{12}, ww_2\}$ and $M=\{x, u, u_2, v, v_2, w, w_1\}$ as shown in Figure \ref{fig:finalcases}(vi). By the minimality of $G$, both $G[L]$ and $G[R]$ have good colorings. Now impose the good coloring of $G[R]$ onto $G[R\cup M]$ and color $vv_1$ and $ww_2$ with a 1. We perform the adjustment on the colored edges incident to $u_{22}$, $v_{22}$, and $w_{12}$ so that each of the edges $u_2u_{22}$, $v_2v_{22}$, and $w_1w_{12}$ have at least 2 colors available. Call this good partial coloring $\phi$. Our goal will be to extend the coloring $\phi$ to $G$ and color $H=\{uu_2, u_2u_{22}, xu, vv_2, v_2v_{22}, ww_1, w_1w_{12}\}$ with exactly 5 colors to make use of Lemma \ref{lem:4cut}. 
			
			First note that $xu$ initially has 7 colors available, which will be important in our cases below. Since $|A^2_\phi (uu_2)| \ge 5$ and $|A^2_\phi (ww_1)| \ge 3$, the edges $uu_2$ and $ww_1$ have an available color in common, say $2_1$, so we color both these edges with $2_1$.  Since $u_2u_{22}$ and $w_1w_{12}$ both have at least 1 color available, we can color these edges with colors $2_\beta$ and $2_\gamma$, respectively. Since $v_2v_{22}$ has at least 2 colors available, we color $v_2v_{22}$ with color $2_\alpha$. Note that $w_1w_{12}, u_2u_{22}, v_2v_{22}$ had at least $1,1,2$ colors available. Thus we may assume that $2_\alpha, 2_\beta, 2_\gamma$ are not all the same. We now consider cases based on the colors $2_\alpha, 2_\beta, 2_\gamma$. 
			
			Suppose that the edges $v_2v_{22}, u_2u_{22}, w_1w_{12}$ receive distinct colors, without loss of generality assume that either $2_\alpha =2_3, 2_\beta = 2_2, 2_\gamma = 2_4$ or $2_\alpha =2_1, 2_\beta = 2_2, 2_\gamma = 2_3$. \textbf{Case (A)}: $2_\alpha =2_3, 2_\beta = 2_2, 2_\gamma = 2_4$. Since $vv_2$ has at least 2 colors available, we can color it with some color not equal to $2_1$. If we can color $vv_2$ with some color from $\{2_2, 2_4\}$, then we do so; then $xw,xv,xu$ each has at least $2,2,3$ colors available. Thus, we can first color $xw,xv$ greedily in this order and then color $xu$ with a color from $\{2_5, 2_6, 2_7\}$ that is not used on $xw,xv$. Suppose now that we must color $vv_2$ with some color not in $\{2_1, 2_2, 2_3, 2_4\}$. Since $xu$ initially had all 7 colors available, we can color $xu$ with $2_3$; then $xw, xv$ has at least 1 and 2 colors available and we can finish by coloring $xw, xv$ greedily in this order. \textbf{Case (B)}: $2_\alpha =2_1, 2_\beta = 2_2, 2_\gamma = 2_3$. If we can color $vv_2$ with some color not in $\{2_1, 2_2, 2_3\}$, say $2_4$, then do so. We uncolor $uu_2$. Note that $uu_2$ has at least 4 colors available and $A_\phi ^2(uu_2)\subseteq \{2_1,2_3,2_4,2_5,2_6,2_7\}$, so at least one color belonging to $\{2_5,2_6,2_7\}$ is available at $uu_2$. We color $uu_2$ with a color in $\{2_5, 2_6, 2_7\}$. Since $xw,xv,xu$ have at least 1, 2, 3 colors available, respectively, we can $xw,xv, xu$ greedily in this order. Suppose now that we must color $vv_2$ with some color in $\{2_2, 2_3\}$. If we color $vv_2$ with $2_2$ ($2_3$).  We uncolor $uu_2$. Since $uu_2$ has at least $4$ colors available, we color it with a color in $\{2_4, 2_5, 2_6, 2_7\}$, say $2_4$. Then $xw, xv, xu$ has at least $1,2,4$ ($1,2,3$) colors available. We first color $xw,xv$ greedily in this order and then color $xu$ with a color in $\{2_5, 2_6, 2_7\}$ that is not used on $xw,xv$.
			
			Suppose that only two of the edges $v_2v_{22}, u_2u_{22}, w_1w_{12}$ receive the same color, without loss of generality we either have $2_\alpha, 2_\beta, 2_\gamma \in \{2_1, 2_2\}$ or $2_\alpha, 2_\beta, 2_\gamma \in \{2_2, 2_3\}$. \textbf{Case (A):} $2_\alpha, 2_\beta, 2_\gamma \in \{2_1, 2_2\}$. Then we must have $2_\alpha = 2_1$ and $2_\beta = 2_\gamma = 2_2$. Since $vv_2$ has at least 2 colors available, we can color $vv_2$ with some color not in $\{2_1, 2_2\}$, say $2_3$. We uncolor $uu_2$. Since $uu_2$ has at least $4$ colors available, we recolor $uu_2$ with a color not in $\{2_1, 2_2, 2_3\}$, say $2_4$. Then $xw, xv, xu$ has at least $1,2,3$ colors available. We can first color $xw, xv$ greedily and then color $xu$ with a color in $\{2_5, 2_6, 2_7\}$ that is not used on $xw, xv$. \textbf{Case (B):} $2_\alpha, 2_\beta, 2_\gamma \in \{2_2, 2_3\}$. If we can color $vv_2$ with some color not in $\{2_1, 2_2, 2_3\}$, say $2_4$, then $xw,xv,xu$ has $2, 2, 4$ colors available. We first color $xw, xv$ greedily in this order and then color $xu$ with a color in $\{2_5, 2_6, 2_7\}$ that is not used on $xw, xv$. Otherwise, suppose that $vv_2$ must receives some color from the set $\{2_1, 2_2, 2_3\}$. Then $v_2v_{22}$ must had at least $3$ colors available before we color $vv_2$. Thus, we may assume $2_{\gamma} \neq 2_\alpha$, say $2_\gamma = 2_2$ and $2_\alpha = 2_3$. The color $vv_2$ receives is in $\{2_1, 2_2\}$. Then we uncolor the edges $uu_2$, and since this edge has at least 4 colors available, we color $uu_2$ with some color not in $\{2_1, 2_2, 2_3\}$, say we color the edge $uu_2$ with $2_4$. Since $xv, xw, xu$ has at least $1,2,3$ colors available, we can color the edges $xv, xw$ greedily in this order and color $xu$ with a color in $\{2_5, 2_6, 2_7\}$ that is not used on $xv, xw$.
		\end{proof}
		
		
		\begin{newcase}
			$F=\{uu_1, vv_1, ww_1\}$.
		\end{newcase}
		
		\begin{proof}
			We have $F'=\{uu_2, vv_2, ww_2\}$ and $M=\{x, u, v, w\}$ as seen in Figure \ref{fig:finalcases}(vii). By the minimality of $G$, both $G[L]$ and $G[R]$ have good colorings. Now impose the good coloring of $G[L]$ onto $G[L \cup M]$. We perform the adjustment on the colored edges incident to $u_1$, $v_1$, and $w_1$ so that each of the edges $uu_1$, $vv_1$, and $ww_1$ have at least 2 colors available. Our goal is to color the six edges $xu, xv, xw, uu_1, vv_1, ww_1$ with at most 5 colors and make use of Lemma \ref{lem:3cut} on the cut $\{uu_2, vv_2,ww_2\}$.
			
			Suppose two of the edges $uu_1$, $vv_1$, $ww_1$ receive the same color, say both $uu_1$ and $vv_1$ are colored $2_1$. Then since $ww_1$, $xw$, $xv$, $xu$ have at least 2, 4, 4, 4 colors available, respectively, we are done by SDR. Thus to finish this proof, we will show in all cases that at least 2 of the edges $uu_1$, $vv_1$, $ww_1$ can receive the same color.
			
			Suppose at least two of the edges $uu_1$, $vv_1$, $ww_1$ each have at least 3 colors available, say $|A^2(uu_1)|\ge 3$ and $|A^2(vv_1)|\ge 3$. Then since $|A^2(uu_1)|\ge 3$, $|A^2(vv_1)|\ge 3$, and $|A^2(ww_1)|\ge 2$, at least 2 of the edges $uu_1$, $vv_1$, $ww_1$ have a color in common, so we are done by the above. 
			
			Otherwise, at least two of the edges $uu_1$, $vv_1$, $ww_1$ have at most 2 colors available, say $|A^2(uu_1)|\le 2$ and $|A^2(vv_1)| \le 2$. Since we used the adjustment on $u_1$ and $v_1$, we can say that $|A^2(uu_1)|= 2$ and $|A^2(vv_1)|= 2$. Then $|A^2(ux)| \ge 6$, $|A^2(vx)| \ge 6$, $|A^2(wx)|\ge 5$, and $|A^2(ww_1)| \ge 2$. Thus, we can color $ux$ and $ww_1$ with a common color, say $2_1$. Then $uu_1, vv_1, xw, xv$ have at least $1,1,3,4$ colors available and we can greedily color them in this order. 
		\end{proof}
		
		
		\begin{newcase}\label{config:9.2}
			$F=\{uu_1, vv_1, w_1w_{11}\}$.
		\end{newcase}
		
		\begin{proof}
			We have $F'=\{uu_2, vv_2, w_1w_{12}, ww_2\}$ and $M=\{x, u, v, w, w_1\}$ as shown in Figure \ref{fig:finalcases}(viii). By the minimality of $G$, both $G[L]$ and $G[R]$ have good colorings. Now impose the good coloring of $G[R]$ onto $G[R\cup M]$. We first color $ww_2$ with $1$. Then we perform the adjustment on the colored edges incident to $u_2$, $v_2$, and $w_{12}$ so that each of the edges $uu_2$, $vv_2$, and $w_1w_{12}$ have at least 2 colors available. Our goal is to color the six edges $xu,, uu_2, xv, vv_2, ww_1, w_1w_{12}$ with at most 5 colors, which contradicts Lemma \ref{lem:3cut}. 
			
			Consider the edges $uu_2$, $vv_2$, and $ww_1$, which have at least 2, 2, 3 colors available, respectively. Suppose two of these edges have a color in common. First, assume we can color $ww_1$ and $uu_2$ with $2_1$. Then we color $vv_2$ with some color, $2_\alpha$, and color $w_1w_{12}$ with some color $2_\beta$. Since $xw$, $xv$, $xu$ have at least 2, 3, 3 colors available, respectively, then we can finish the coloring greedily in this order. By symmetry, the case that we can use the same color on $vv_2$ and $ww_1$ follows similarly. Lastly, assume that we can color $uu_2$ and $vv_2$ with the same color, say $2_1$. Then we color $ww_1$ with some color, say $2_\gamma$, and color $w_1w_{12}$ with $2_\delta$. Then since $xw$, $xv$, $xu$ have at least 2, 3, 3 colors available, respectively, then we can finish the coloring greedily in this order.
			
			Otherwise, the edges $uu_2$, $vv_2$, and $ww_1$ have no available color in common. By the adjustment, this implies that $|A^2(uu_2)|=2$, $|A^2(vv_2)|=2$, and $|A^2(ww_1)|=3$, else there would be a color in common. Again by the adjustment, this means that $|A^2(xu)|\ge 6$, $|A^2(xv)|\ge 6$, and $|A^2(w_1w_{12})|\ge 3$. Since $|A^2(xu)|\ge 6$ and $|A^2(w_1w_{12})|\ge3$, we can color $xu$ and $ww_1$ with the same color, say $2_1$. Now $uu_2, vv_2, ww_1, xw, xv$ have at least $1, 1, 2, 4, 5$ colors available, respectively, we can finish the coloring greedily in this order. 
		\end{proof}
		
		Note that $F=\{uu_1, v_2v_{21}, ww_1\}$ is symmetric to Case \ref{config:9.2}.
		
		
		\begin{newcase} \label{config:10.2}
			$F=\{uu_1, v_2v_{21}, w_1w_{11}\}$.
		\end{newcase}
		
		\begin{proof}
			We have $F'=\{uu_2, v_2v_{22}, vv_1, w_1w_{12}, ww_2\}$ and $M=\{x, u, v, w, w_1\}$ as seen in Figure~\ref{fig:finalcases}(ix). By the minimality of $G$, both $G[L]$ and $G[R]$ have good colorings. Now impose the good coloring of $G[R]$ onto $G[R\cup M]$ and and color the edges $vv_1$ and $ww_2$ with 1. We perform the adjustment on the colored edges incident to $u_2$, $v_{22}$, and $w_{12}$ so that each of the edges $v_2v_{22}$, $uu_2$, and $w_1w_{12}$ have at least 2 colors available. Our goal is to color the six edges $xu,uu_2, vv_2, v_2v_{22}, ww_1, w_1w_{12}$ with at most 5 colors, which contradicts Lemma \ref{lem:3cut}. 
			
			First assume that $vv_2$ and $ww_1$ can get the same color, say $2_1$. Then we need only show that we can color the edges incident to vertices in $M$. Since $uu_2$ has at least two colors available,  we can color this edge some some color $2_\alpha \neq 2_1$. Since $v_2v_{22}$ and $w_1w_{12}$ have at least 1 color available, we can color these edges with some colors $2_\beta$ and $2_\gamma$, respectively. Since $xv, xw, xu$ have at least 2, 2, 3 colors available, respectively, then we can finish the coloring greedily in this order. 
			
			Now suppose that $vv_2$ and $ww_1$ cannot receive the same color. Then by the adjustment, this implies that $|A^2(vv_2)|=3$ or $|A^2(ww_1)|=3$, so without loss of generality assume that $|A^2(ww_1)|=3$. Then we must have $|A^2(w_1w_{12})| \ge 3$. Since $|A^2(xu)|\ge 5$ and $|A^2(w_1w_{12})| \ge 3$, then we can color these edges with the same color, say $2_1$. Since $uu_2$ still has at least 1 color available, we can color this edge with some color, say $2_2$. Now since we assumed $vv_2$ and $ww_1$ cannot receive the same color, we must have $|A^2(vv_2)\cup A^2(ww_1)| \ge 5$. Note that also $|A^2(xw)|\ge 3$ and $|A^2(v_2v_{22})|\ge 2$, so that if the edges $xw$ and $v_2v_{22}$ do not have an available color in common, then we are done by SDR. Otherwise, the edges $xw$ and $v_2v_{22}$ must have some color in common, say $2_3$. We color both $v_2v_{22}$ and $xw$ with $2_3$. Since $ww_1$, $vv_2$, $xv$ have at least 1, 1, 2 colors available, respectively, with $|A^2(vv_2)\cup A^2(ww_1)| \ge 4$, then we are done by SDR. 
		\end{proof}
		
		Note the cases where $F=\{uu_1, u_2u_{21}, vv_1, v_2v_{21}, ww_1\}$ and $F=\{u_2u_{21}, v_2v_{21}, ww_1\}$ are symmetric to Case \ref{config:10.2}.

		
		\begin{newcase}
			$F = \{u_2u_{21}, v_2v_{21}, w_1w_{11}\}$.
		\end{newcase}
		
		\begin{proof}
			Here we have $F' = \{u_2u_{22}, uu_1, v_2v_{22}, vv_1, ww_2, w_1w_{12}\}$ and $M = \{x, u, v, w, u_2, v_2, w_1\}$ as shown in Figure~\ref{fig:finalcases}(x).  By the minimality of $G$ both $G[L]$ and $G[R]$ have good colorings.  Impose the good coloring of $G[R]$ onto $G[R \cup M]$ and color $uu_1, vv_1$, and $ww_2$ with 1.  We then perform our adjustment to the colored edges incident to $u_{22}, v_{22}, w_{12}$.  Call this good partial coloring of $G[M \cup R]$, $\phi$. Our goal is to color $u_2u_{22}, uu_2, vv_2, v_2v_{22}, w_1w, w_1w_{12}$ with at most $5$ colors, which contradicts Lemma \ref{lem:3cut}. Observe that $|A^2_\phi(uu_2)|, |A^2_\phi(vv_2)|, |A^2_\phi(ww_1)| \ge 3$ so that without loss of generality, we can color $uu_2$ and $ww_1$ with a common color, say $2_1$.  
			
			Suppose we can color $v_2v_{22}$ with $2_1$ as well.  We then greedily color $u_2u_{22}$ and $w_1w_{12}$ (each had at least two colors available under $\phi$).  We then color $vv_2$ avoiding $2_1$ and the color used on $u_2u_{22}$ (this ensures that not all the edges relevant to Lemma \ref{lem:3cut} will only use two colors).  We can then finish greedily by coloring $xu, xw, xv$ in this order.
			
			So $2_1 \notin A^2_\phi(v_2v_{22})$.  Suppose we can color $vv_2$ with $2_1$.  We again color $u_2u_{22}$ and $w_1w_{12}$ greedily.  Similar to the above, we now color $v_2v_{22}$ with a color that is not used on $u_2u_{22}$ (this ensures that not all the edges relevant to Lemma \ref{lem:3cut} will only use two colors).  We then finish greedily by coloring $xu, xw, xv$ in this order.
			
			So $2_1 \notin A^2_\phi(vv_2)$ either.  Therefore no matter how we color the remaining uncolored edges, we will never end up with the edges relevant to Lemma \ref{lem:3cut} using only two colors.   Greedily color $u_2u_{22}, w_1w_{12}$ and call this good partial coloring $\psi$.  Observe that $|A^2_\psi(v_2v_{22})| \ge 2, |A^2_\psi(vv_2)| \ge 3$, $|A^2_\psi(xu)| \ge 3, |A^2_\psi(xw)| \ge 3$, and $|A^2_\psi(xv)| \ge 4$.  If we cannot color $v_2v_{22}$ and $xu$ the same, then we are done by SDR.  Otherwise, if we can color them the same, then we color them using a color in common and we are still done by SDR.
		\end{proof}

		\section{Further Research}\label{sec:further}
		
		There are several current conjectures regarding $S$-packing edge-colorings for subcubic graphs.  For example,  Hocquard et al. in \cite{HLL2} show that every 3-edge-colorable subcubic graph is $(1,2^7)$-packing edge-colorable, and pose the following conjecture.
		
		\begin{conjecture}[Hocquard, Lajou, and Lu\v zar~\cite{HLL2}]\label{hll2}
			Every $3$-edge-colorable subcubic graph is $(1, 2^6)$-packing edge colorable.
		\end{conjecture}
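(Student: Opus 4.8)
The plan is to mirror the minimal-counterexample strategy used for Theorem~\ref{LSS1}, but now carried out inside the class of $3$-edge-colorable subcubic graphs. First I would fix a counterexample $G$ minimizing $|V(G)|+|E(G)|$ among all $3$-edge-colorable subcubic graphs admitting no $(1,2^6)$-packing edge-coloring. The crucial structural observation that makes the whole induction viable is that $3$-edge-colorability is hereditary under taking subgraphs: restricting a proper $3$-edge-coloring of $G$ to any subgraph $H$ yields a proper $3$-edge-coloring of $H$. Hence every deletion operation used in Section~\ref{sec:lemmas} (deleting vertices, deleting edges, passing to a component) stays inside the class and the induction hypothesis applies verbatim.

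Then I would re-run the reduction lemmas (simplicity, $\delta(G)\ge 2$, no cut-edges, edge cuts of size two being trivial, girth at least seven) to conclude, as in Lemma~\ref{lem:girth}, that $G$ is a $3$-regular simple graph of girth at least seven with no edge cut of size at most two. The one place where care is genuinely needed is every reduction that \emph{adds} an edge (for instance the vertex-and-edge additions in Lemma~\ref{lem:cutedge}, the modifications in Lemma~\ref{lem:cut}, and later in Lemmas~\ref{lem:3cut} and~\ref{lem:4cut}): adding an edge to a class-$1$ graph can create a class-$2$ graph, so the induction hypothesis may fail on the modified graph. Here I would replace ``add an edge'' by ``add a path of two edges through a new degree-$2$ vertex'', which preserves $3$-edge-colorability (a new degree-$2$ vertex is always colorable), and then argue that the two new edges behave, for the packing coloring, exactly like the single edge they replace.

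Being $3$-regular and $3$-edge-colorable, $G$ now decomposes into three perfect matchings $M_1,M_2,M_3$, and this is precisely the extra resource that the $(1,2^7)$ proof lacked and that the sharpness example (the blow-up of $C_5$, which is class $2$) shows is essential. I would take $M_1$ to be the $1$-color class; since every edge of $M_1$ joins two $3$-vertices, this respects the ``good coloring'' convention. The remaining problem is to color the $2$-factor $M_2\cup M_3$, a disjoint union of even cycles, with only six $2$-colors, where two edges conflict when they see each other in $G$. The only conflicts between distinct cycles run through $M_1$-edges, and within a cycle the even length gives an inherently more flexible strong coloring than an odd cycle would; this is exactly where the loss of the seventh color should be recovered. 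I would feed this through the same edge-cut machinery of Section~\ref{sec:cuts}, proving $(1,2^5)$-analogues of Lemmas~\ref{lem:3cut} and~\ref{lem:4cut} in which the distributed budget drops from $5$ to $4$ colors, and then dispatch each configuration of the cut.

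The hard part will be two intertwined difficulties. First, the color budget is now tight: the SDR counts in Section~\ref{sec:cuts} routinely rely on having one spare color, and with only six $2$-colors that slack disappears, so each case must be closed by actively exploiting the matching decomposition $M_1,M_2,M_3$ rather than by counting alone. Second, the cut reductions and the final $(1,2^5)$-analogue of Lemma~\ref{lem:4cut} must be re-verified while simultaneously guaranteeing that the auxiliary graphs remain $3$-edge-colorable; keeping these two constraints compatible—especially in the size-five cut case handled by computer search—is where I expect the real work, and possibly the need for ideas beyond a direct translation of the present proof, to lie.
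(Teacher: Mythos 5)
First, a point of order: the statement you were asked to prove is not proved anywhere in the paper — it appears as Conjecture~\ref{hll2}, an open problem of Hocquard, Lajou, and Lu\v zar that the authors merely restate in their ``Further Research'' section. So there is no proof of record to compare against, and your proposal must stand on its own. It does not, for two concrete reasons.

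The decisive gap is your claimed repair of the edge-addition steps. You assert that replacing ``add an edge $xy$'' by ``add a new degree-$2$ vertex $z$ joined to $x$ and $y$'' preserves $3$-edge-colorability because ``a new degree-$2$ vertex is always colorable.'' This is false. Take $G$ to be the Petersen graph minus an edge $uv$: this graph is $3$-edge-colorable (the Petersen graph is edge-critical) and $u,v$ have degree $2$ in it. Adding $z$ adjacent to $u$ and $v$ produces the Petersen graph with $uv$ subdivided, which is \emph{not} $3$-edge-colorable: in any proper $3$-edge-coloring the two edges at $z$ get distinct colors $a \neq b$, and since every other vertex has degree $3$ and must see all three colors, the color class of $a$ would be a matching covering all $11$ vertices, which is impossible as $11$ is odd. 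Thus the auxiliary graphs built in the analogues of Lemma~\ref{lem:cut}, Claim~\ref{alldiff}, Lemmas~\ref{distance4} and~\ref{5cycle}, etc., can leave the class of $3$-edge-colorable graphs (whether you add the edge directly or via your subdivision trick), and the induction hypothesis cannot be applied to them. This is not a technicality to be patched locally; it is precisely the obstruction that makes class-preserving inductions over class-$1$ cubic graphs hard, and the reason the $3$-edge-colorable case is a separate open conjecture rather than a corollary of Theorem~\ref{LSS1}.

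The second gap is that the heart of the argument is never carried out. After fixing one perfect matching $M_1$ as the $1$-color class, the entire problem becomes: strong-color the $2$-factor $M_2 \cup M_3$ with six $2$-colors, where two edges conflict whenever they see each other in $G$ (so conflicts pass through $M_1$-edges and between distinct cycles of the $2$-factor). You do not prove this, you only locate it, and you concede that the SDR slack used throughout Section~\ref{sec:cuts} disappears and that ``ideas beyond a direct translation'' may be needed. Note also that committing to an arbitrary $M_1$ in advance is itself unjustified: nothing shows that every perfect matching arising from a proper $3$-edge-coloring extends to a $(1,2^6)$-packing edge-coloring, so the choice of $M_1$ would have to be part of the proof. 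As it stands, your proposal is a research plan containing one false lemma and one unproved main step, not a proof of Conjecture~\ref{hll2}.
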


		In addition, Gastineau and Togni in \cite{GT2} showed that every subcubic graph $G$ with a $2$-factor has a $(1^2,2^5)$-packing edge-coloring, and if the graph is additionally 3-edge-colorable, then it is $(1^2,2^4)$-packing edge-colorable.  Hocquard et al. \cite{HLL2} were able to show the same holds without the assumption of a $2$-factor.   They also restate the following conjectures of Gastineau and Togni.

		\begin{conjecture}[Gastineau and Togni~\cite{GT2}, Hocquard et al. \cite{HLL2}]\label{gtedge}
			Every subcubic graph is $(1^2,2^4)$-packing edge colorable.
		\end{conjecture}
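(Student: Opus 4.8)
The plan is to attack Conjecture~\ref{gtedge} with the same minimal-counterexample machinery used for Theorem~\ref{LSS1}, now over the palette $\{1_1,1_2,2_1,2_2,2_3,2_4\}$ of two $1$-colors and four $2$-colors. As before I would first strengthen the statement, declaring a coloring \emph{good} only if the two $1$-colors appear on edges both of whose endpoints are $3$-vertices; this stronger hypothesis is what powers the reducibility steps, and it is consistent with the sharpness graph of Figure~\ref{ex-1}, where the ten edges force each of the two matchings to be maximum and each of the four induced matchings to be a single edge. Let $G$ minimize $|V(G)|+|E(G)|$ among counterexamples. The first block of work is to reprove the structural reductions Lemmas~\ref{lem:delta2}--\ref{6cycle}: that $G$ is simple, has no edge cut of size at most two, and has girth at least seven, culminating in an analogue of Lemma~\ref{lem:girth} that $G$ is $3$-regular with large girth. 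These are local deletion/SDR arguments and should transfer in spirit, although each available-color count must be rechecked against the smaller palette, and the finite base-case verifications (analogues of Lemma~\ref{nine}) will need to be redone by computer.

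Once $G$ is cubic and bridgeless, Petersen's theorem gives a perfect matching $P$ together with a complementary $2$-factor. The second block is to establish edge-cut reduction lemmas analogous to Lemmas~\ref{lem:3cut} and~\ref{lem:4cut}, classifying which color patterns across a small cut $[X,Y]$ can always be glued together, and then to build the $L,M,R$ partition via a maximal sequence $S$ exactly as in the Setup. The final block is the case analysis on the cut $F=[L,V(G)\setminus L]$ displayed in Figure~\ref{fig:finalcases}: extend a good coloring of $G[L]$ (or $G[R]$) across $M$ and invoke the cut lemmas to reach a contradiction.

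The main obstacle is that the comfortable slack enjoyed in Theorem~\ref{LSS1} disappears. There, an uncolored edge sees at most $12$ edges and there is only a single $1$-color, so coloring a perfect matching with $1$ and counting $12-4-2=6<7$ always leaves a spare $2$-color for a greedy or SDR step. With only four $2$-colors the same count gives no room at all: an edge can see far more distinct $2$-colors than are available, so the perfect-matching-plus-greedy strategy fails outright. I expect the fix to require genuinely using \emph{both} $1$-colors in a coordinated way. When $G$ is $3$-edge-colorable one can assign two of the color classes to $1_1,1_2$ and then only needs to $4$-strong-color the remaining perfect matching, whose conflict graph has bounded degree under the girth hypothesis; this is essentially why the $3$-edge-colorable case, and the weaker Conjecture~\ref{hll2}, are more accessible. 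For the class-$2$ case no such clean decomposition is available, and the edge-cut case analysis must be pushed through with the $2$-color budget so tight that most greedy steps become exact SDR checks. I would expect this final case analysis, together with the enlarged base cases, to be where the real difficulty lies and where substantial computer assistance (as already used in Lemmas~\ref{nine} and~\ref{lem:4cut}) becomes unavoidable.
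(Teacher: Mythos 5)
This statement is not something the paper proves: Conjecture~\ref{gtedge} appears in Section~\ref{sec:further} as an \emph{open problem} (due to Gastineau and Togni, restated by Hocquard, Lajou, and Lu\v{z}ar), and the only theorem the paper establishes is the $(1,2^7)$ result of Theorem~\ref{LSS1}. So there is no proof of the paper's to compare yours against --- and, more to the point, your proposal is not a proof either. It is a research plan, and you yourself identify the fatal obstruction without overcoming it. The engine behind every step of the paper's argument is numerical slack: an uncolored edge sees at most twelve other edges, and after discounting edges colored~1 and edges not yet colored, at most six of the \emph{seven} 2-colors are forbidden (this is exactly the computation $12-4-2=6$ in Lemma~\ref{lem:S=G}, and the same slack powers the SDR counts in Lemmas~\ref{lem:delta2}--\ref{6cycle} and in the final case analysis). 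With only four 2-colors this slack is not merely reduced, it is negative, so essentially none of the lemmas you propose to ``transfer in spirit'' survive: already the two-line proof of Lemma~\ref{lem:delta2} fails, since a pendant edge seeing six colored edges can have all four 2-colors forbidden; Lemma~\ref{lem:uncolored}, the sequence $S$, and the greedy extension of Lemma~\ref{lem:S=G} become useless; and there is no reason to expect analogues of the cut-gluing Lemmas~\ref{lem:3cut} and~\ref{lem:4cut}, whose proofs enumerate permutations of a seven-color palette (partly by computer), to hold over a four-color palette.

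Your proposed remedy --- using both 1-colors ``in a coordinated way'' --- is precisely the missing idea, and you do not supply it. Petersen's theorem hands you one perfect matching, not two disjoint ones; a class-two cubic graph need not contain two disjoint perfect matchings at all, so it is not even clear which two edge sets should receive the colors $1_1$ and $1_2$. Your observation that the 3-edge-colorable case is easier is correct but carries no weight here: that case is already settled in the literature (the paper states, just before Conjecture~\ref{gtedge}, that Gastineau--Togni and then Hocquard et al.\ \cite{GT2,HLL2} proved every 3-edge-colorable subcubic graph is $(1^2,2^4)$-packing edge-colorable), so the entire content of the conjecture is the class-two case, for which you offer no mechanism beyond the hope of heavy computer assistance. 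In short: no step of your plan is carried out, the steps you sketch provably do not transfer by the paper's counting arguments, and the statement remains what the paper says it is --- an open conjecture.
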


		\begin{conjecture}[Gastineau and Togni~\cite{GT2}, Hocquard et al. \cite{HLL2}]\label{gtedge2}
			Every $3$-edge-colorable, subcubic graph is $(1^2,2^3)$-packing edge colorable.
		\end{conjecture}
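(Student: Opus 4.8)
The plan is to attack Conjecture~\ref{gtedge2} through a minimal counterexample $G$, in exactly the spirit of the proof of Theorem~\ref{LSS1}, but now with the much tighter palette of two $1$-colors and three $2$-colors. The observation that makes the minimal-counterexample machinery available is that $3$-edge-colorability is inherited by subgraphs: if $G$ is $3$-edge-colorable and $H\subseteq G$, then restricting a proper $3$-edge-coloring of $G$ to $H$ shows $H$ is $3$-edge-colorable. Hence every reduction in Section~\ref{sec:lemmas} that only \emph{deletes} vertices or edges stays inside the class, and I would re-run the analogues of Lemmas~\ref{lem:delta2}--\ref{6cycle} with the new color counts to show that a minimal counterexample is again $3$-regular, simple, of large girth, and free of small edge cuts. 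The reductions that \emph{add} an edge (as in Lemmas~\ref{4cycle} and~\ref{5cycle}) need extra care, since inserting a chord can destroy $3$-edge-colorability; there one must either argue that the particular chord keeps the graph class~$1$, or replace the chord-addition by a pendant-vertex gadget (attaching a pendant edge to a vertex of degree at most two never raises the chromatic index) and absorb the resulting endpoint restrictions into the SDR counts.

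Once $G$ is known to be $3$-regular and $3$-edge-colorable, I would fix a proper $3$-edge-coloring $M_1,M_2,M_3$ (each a perfect matching) and use two of the matchings, say $M_1,M_2$, directly as the two $1$-colors. The problem then reduces to splitting the third matching $M_3$ into three induced matchings, i.e.\ to properly $3$-coloring the \emph{conflict graph} $C$ whose vertices are the edges of $M_3$ and whose edges join two $M_3$-edges lying at distance at most two in $G$. Since $G$ is cubic and $M_3$ is a matching, $\Delta(C)\le 4$, so $C$ is $5$-colorable greedily but not automatically $3$-colorable (a $K_4$ in $C$ already forces four colors); closing this gap is where the real work lies. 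I would try to create slack by \emph{re-routing} $M_3$ along Kempe chains: swapping a two-colored alternating cycle in $M_1\cup M_3$ or $M_2\cup M_3$ keeps all three classes perfect matchings while altering which edges lie in $M_3$, and the goal is to reach a choice of $M_3$ whose conflict graph avoids the dense obstructions and is therefore $3$-colorable by a Brooks-type argument on each of its components.

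If the global recoloring cannot be pushed through in all cases, the fallback is to reproduce the edge-cut strategy of Section~\ref{sec:cuts}: establish three-$2$-color analogues of Lemma~\ref{lem:3cut} and Lemma~\ref{lem:4cut}, partition $V(G)$ through a small edge cut, color $G[L]$ and $G[R]$ by minimality, and stitch the pieces together by SDR while respecting a common underlying $3$-edge-coloring. The main obstacle, and the reason the conjecture is genuinely hard, is precisely the tightness of the palette: with only three $2$-colors an edge that sees up to twelve other edges has almost no room, so the cut lemmas must fully exploit that two of the five colors are mere matchings, imposing only the weaker distance-two constraint, and that on a $3$-edge-colorable graph the matching structure is rigid. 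I expect the bookkeeping in these cut cases---keeping the three $2$-color classes induced matchings, the two $1$-color classes matchings, and the whole coloring compatible with a fixed $3$-edge-coloring---to be the principal difficulty, very likely requiring either computer search (as in Lemmas~\ref{nine} and~\ref{lem:4cut}) or a substantially more delicate discharging argument than the one behind Theorem~\ref{LSS1}.
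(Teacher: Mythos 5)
This statement is not proved in the paper at all: Conjecture~\ref{gtedge2} appears in Section~\ref{sec:further} as an \emph{open} conjecture of Gastineau--Togni and Hocquard et al., so there is no proof of the paper's to compare against, and any correct argument here would be a new result, not a reproof. Your proposal is, by its own admission, a research plan rather than a proof, and its two engines both have genuine gaps. First, the minimal-counterexample machinery of Section~\ref{sec:lemmas} does not transplant: the engine behind every lemma there is counting slack --- an uncolored edge sees at most $12$ edges, several of which are forced to carry the $1$-color, leaving one of the seven $2$-colors free --- and with a palette of two $1$-colors and three $2$-colors that slack is gone. Already the analogue of Lemma~\ref{lem:delta2} fails: a pendant edge $vx$ sees up to $6$ colored edges, which can exhaust all three $2$-colors while both $1$-colors appear on the two edges adjacent to $vx$, so even the degree-one reduction is not automatic, let alone the girth and cut lemmas whose SDR counts are tuned to seven $2$-colors. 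You acknowledge the tightness but offer no substitute mechanism.

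Second, your reduction to splitting $M_3$ into three induced matchings is both unproven and potentially stronger than the conjecture itself. A $(1^2,2^3)$-packing edge-coloring is not required to have its two $1$-classes form two color classes of a proper $3$-edge-coloring (nor the union of its $2$-classes form the third class), so it is conceivable that the conjecture holds while \emph{no} proper $3$-edge-coloring of some graph admits such a split; you would need to prove your reduction is achievable, not merely sufficient. Your correct observation that the conflict graph $C$ satisfies $\Delta(C)\le 4$ yields, via Brooks or greedy coloring, a $4$-coloring of $C$ --- which only re-derives the known $(1^2,2^4)$ result of Gastineau and Togni cited in Section~\ref{sec:further} --- and the passage from $4$ to $3$ colors is precisely the open problem. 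The Kempe-chain re-routing you invoke to avoid $K_4$'s and other dense obstructions in $C$ is stated with no argument that such a re-routing always exists, and the fallback cut-lemma strategy inherits the same collapsed color counts as above. In short, every step that would constitute the actual mathematical content is left as a hope.
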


		\section{Appendix}
		
		The proofs of Lemmas \ref{nine} and \ref{lem:4cut} involve computer computations utilizing SAGE.  The explanations and full code for each can be found at the following links.
		
		Lemma \ref{nine}:  \href{https://sites.google.com/site/santanagvsu/sage/127-packing-edge-coloring/lemma-9?authuser=0}{https://sites.google.com/site/santanagvsu/sage/127-packing-edge-coloring/lemma-9?authuser=0}

		Lemma \ref{lem:4cut}:  \href{https://sites.google.com/site/santanagvsu/sage/127-packing-edge-coloring/lemma-21?authuser=0}{https://sites.google.com/site/santanagvsu/sage/127-packing-edge-coloring/lemma-21?authuser=0}


\begin{thebibliography}{99}\setlength{\itemsep}{-0.001mm}
			
			\bibitem{A1}
			L.D. Andersen, The strong chromatic index of a cubic graph is at most $10$. {\em Discrete Math.} 108 (1992), 231--252.
			
			\bibitem{BKL1}
			J. Balogh, A. Kostochka, X. Liu,  Packing chromatic number of cubic graphs, {\em Discrete Math.} 341 (2018),  474--483.
			
			\bibitem{BKL2}
			J. Balogh, A. Kostochka, X. Liu,  Packing coloring of subdivision of cubic graphs, {\em Graphs and Combinatorics} 35 (2) (2019), 513--537. 
			
			\bibitem{BF1}
			B. Bre\v sar and J. Ferme, An infinite family of subcubic graphs with unbounded packing
			chromatic number, Discrete Math. 341 (2018), 2337--2343.
			
			\bibitem{BKRW1}
			B. Bre\v sar, S. Klav\v zar, D.F. Rall, K. Wash,
			Packing chromatic number under local changes in a graph, {\em Discrete Math.} 340 (2017), 1110--1115.
			
			
			\bibitem{CDYZ}
			L. Chen, K. Deng, G. Yu, X. Zhou, Strong edge-coloring for planar graphs with large girth, {\em Discrete Math.}, 342 (2) (2019), 339--343.
			
			\bibitem{CKKR}
			I. Choi, J. Kim, A. Kostochka, A. Raspaud, Strong edge-coloring of sparse graphs with large maximum degree, {\em  Eur. J. Comb.} 67 (2016), 21--39.
			
			\bibitem{EN}
			P. Erd\H{o}s and J. Ne\v{s}et\v{r}il, Irregularities of partitions, Ed. G. Hal\'{a}sz and VT S\'{o}s (1989), 162--163.
			
			
			
			
			\bibitem{FJ}
			J. L. Fouquet and J. L. Jolivet, Strong edge-colorings of graphs and applications to multi-k-gons, {\em Ars Combin.} 16A (1983), 141--150.
			
			\bibitem{FKL1}
			J. Fiala, S. Kla\v zar, B. Lidick\'y, The packing chromatic number of infinite product
			graphs, {European J. Combin.} 30 (2009), 1101--1113.
			
			\bibitem{FKS}
			A. Frieze, M. Krivelevich, B. Sudakov, The strong chromatic index of random graphs, {\em SIAM J. Discrete Math.} 19 (2005), 719--727.
			
			\bibitem{GT1}
			N. Gastineau and O. Togni,  S-packing colorings of cubic graphs, {\em Discrete Math.} 339 (2016),  2461--2470.
			
			\bibitem{GT2}
			N. Gastineau and O. Togni, On S-packing edge-colorings of cubic graphs, {\em Discrete Appl. Math.} 259 (2019), 63--75. 
			
			
			
			
			
			\bibitem{HLL}
			H. Hocquard, D. Lajou, B. Lu\v zar, Between proper and strong edge-colorings of subcubic graphs, {\em IWOCA 2020}, Lecture Notes in Computer Science, vol. 12126, 355--367 Springer, Cham.,
			
			\bibitem{HLL2}
			H. Hocquard, D. Lajou, B. Lu\v zar, Between proper and strong edge-colorings of subcubic graphs, {\em J. Graph Theory}, DOI: 10.1002/jgt.2284.
			
			
			\bibitem{HQT}
			P. Hor\'ak, H. Qing, W.T. Trotter, Induced matchings in cubic graphs, {J. Graph Theory}, 17 (2) (1993), 151--160.
			
			\bibitem{HSY}
			M. Huang, M. Santana, G. Yu, Strong chromatic index of graphs with maximum degree four, {\em Electron. J. Combin.} 25 (2018), no. 3, \#P3.31.
			
			\bibitem{KLRSWY}
			A. Kostochka, X. Li, W. Ruksasakchai, M. Santana, T. Wang, G. Yu, Strong chromatic index of subcubic planar multigraphs, {\em Eur. J. Comb.} 51 (2016), 380–-397. 
			
			
			
			
			\bibitem{LMSS}
			B. Lu\v zar, E. Ma\v cajov\'a, M. \v Skoviera, R. Sot\'ak, Strong edge colorings of graphs and the covers of Kneser graphs, {\em J. Graph Theory} 100 (2022), 686--697.
			
			
			\bibitem{MR1}
			M. Molloy and R. Reed, A bound on the strong chromatic index of a graph, {\em J. Combin. Theory B}, 69 (1997), 103--109.
			
			\bibitem{SY}
			A. Steger and M.-L. Yu. On induced matchings, {\em Discrete Math.}, 120:291--295, 1993.
			
			\bibitem{V1}
			V.G. Vizing, On an estimate of the chromatic class of a $p$-graph, {\em Metody Diskret Analiz}, 3 (1964), 25--30.
			
			
		\end{thebibliography}
	\end{document}